\numberwithin{equation}{section}
\let\cal\mathcal
\def\Ascr{{\cal A}}
\def\Cscr{{\cal C}}
\def\Dscr{{\cal D}}
\def\Hscr{{\cal H}}
\def\Oscr{{\cal O}}
\def\Pscr{{\cal P}}
\def\Sscr{{\cal S}}
\def\Tscr{{\cal T}}
\let\blb\mathbb
\def \PP{{\blb P}}
\def \ZZ{{\blb Z}}
\newcommand{\proj}{\operatorname{proj}}
\def\id{\text{id}}
\def\Id{\operatorname{id}}
\def\Lotimes{\overset{L}{\otimes}}
\def\Mod{\operatorname{Mod}}
\def\mod{\operatorname{mod}}
\def\Gr{\operatorname{Gr}}
\def\qgr{\operatorname{qgr}}
\def\gr{\operatorname{gr}}
\def\Qch{\operatorname{Qch}}
\def\coh{\mathop{\text{\upshape{coh}}}}
\def\gr{\operatorname {gr}}
\def\Ext{\operatorname {Ext}}
\def\Hom{\operatorname {Hom}}
\def\End{\operatorname {End}}
\def\RHom{\operatorname {RHom}}
\def\Sl{\operatorname {Sl}}
\def\ker{\operatorname {ker}}
\def\Ker{\operatorname {ker}}
\def\End{\operatorname {End}}
\def\id{{\operatorname {id}}}
\def\r{\rightarrow}
\DeclareMathOperator{\Proj}{Proj}
\DeclareMathOperator{\Ind}{Ind}
\let\dirlim\injlim
\newtheorem{lemma}{Lemma}[section]
\newtheorem{proposition}[lemma]{Proposition}
\newtheorem{theorem}[lemma]{Theorem}
\newtheorem{lemmas}{Lemma}[subsection]
\newtheorem{propositions}[lemmas]{Proposition}
\newtheorem{theorems}[lemmas]{Theorem}
\theoremstyle{definition}
\newtheorem{example}[lemma]{Example}
\newtheorem{examples}[lemmas]{Example}
\theoremstyle{remark}
\newtheorem{remark}[lemma]{Remark}
\newtheorem{remarks}[lemmas]{Remark}
\newdimen\uboxsep \uboxsep=1ex
\def\uboxn#1{\vtop to 0pt{\hrule height 0pt depth 0pt\vskip\uboxsep
\hbox to 0pt{\hss #1\hss}\vss}}
\def\uboxs#1{\vbox to 0pt{\vss\hbox to 0pt{\hss #1\hss}
\vskip\uboxsep\hrule height 0pt depth 0pt}}
\def\MCM{\operatorname{MCM}}
\def\uMCM{\underline{\operatorname{MCM}}}
\def\Inj{\operatorname{Inj}}
\def\can{\operatorname{can}}
\def\ac{\operatorname{ac}}
\def\K{K}
\def\sing{\operatorname{Sg}}
\def\straightK{\operatorname{K}}
\def\straightC{\operatorname{C}}
\def\dbsing{D_{\sing}}
\def\dbsinge{D'_{\sing}}
\newcommand{\koszul}[2]{\operatorname{E}[#1]}
\newcommand{\ukoszul}[2]{\straightK[#1]}
\newcommand{\cech}[2]{\check{\straightC}[#1]}
\newcommand{\cat}[1]{\mathcal{#1}}
\newcommand{\qder}[1]{D(#1)}
\newcommand{\qderu}[2]{D^{#1}(#2)}
\newcommand{\lto}{\longrightarrow}
\newcommand{\xlto}[1]{\stackrel{#1}\lto}
\newcommand{\mf}[1]{\mathfrak{#1}}
\keywords{Singularity category, maximal Cohen-Macaulay modules}
\subjclass{Primary 13C14, 16E65}
\author{Bernhard Keller}
\address{UFR de Mathématiques\\
Université Denis Diderot - Paris 7\\
2, place Jussieu\\
75251 Paris Cedex 05\\
FRANCE}
\email{keller@math.jussieu.fr}
\author{Daniel Murfet}
\address{Hausdorff Center for Mathematics, Endenicher Allee 62, D-53115 Bonn, Germany}
\email{murfet@math.uni-bonn.de}
\author{Michel Van den Bergh}
\address{Departement WNI\\Universiteit Hasselt\\ Universitaire Campus\\ Building D\\ 3590
Diepenbeek\\ Belgium}
\email{michel.vandbenbergh@uhasselt.be}
\thanks{The third author is a director of research at the FWO}
\thanks{The results of this paper were partially obtained during visits
of the third author to
the
Universit\'e Paris Diderot-Paris 7
and the Centre de Recerca Matem\`atica in Barcelona. He hereby thanks
these institutions for their
hospitality.}
\title{On two examples by Iyama and Yoshino}
\begin{document}
\begin{abstract}
 In a recent paper Iyama and Yoshino consider two interesting
 examples of isolated singularities over which it is possible to
 classify the indecomposable maximal Cohen-Macaulay modules in terms
 of linear algebra data. In this paper we present two new approaches
 to these examples. In the first approach we give a relation with
 cluster categories. In the second approach we use Orlov's result on
 the graded singularity category. 
\end{abstract}
\maketitle
\setcounter{tocdepth}{1}
\tableofcontents
\section{Introduction}
Throughout $k$ is a field.
In \cite{IY} Iyama and Yoshino consider the following two settings.
\begin{example}
\label{ref-1.1-0}
Let $S=k[[x_1,x_2,x_3]]$ and let $C_3=\langle\sigma\rangle$ be
the cyclic group of three elements. Consider the action of $C_3$
on $S$ via $\sigma x_i=\omega x_i$ where $\omega^3=1$, $\omega\neq 1$. Put
$R=S^{C_3}$.
\end{example}
\begin{example}
\label{ref-1.2-1}
Let $S=k[[x_1,x_2,x_3,x_4]]$ and let $C_2=\langle\sigma\rangle$ be
the cyclic group of two elements. Consider the action of $C_2$
on $S$ via $\sigma x_i=- x_i$.
Put $R=S^{C_2}$.
\end{example}
In both examples Iyama and Yoshino reduce the classification of
maximal Cohen-Macaulay modules over $R$ to the representation theory
of certain generalized Kronecker quivers.  They use this to classify
the rigid Cohen-Macaulay modules over~$R$.  As predicted by
deformation theory, the latter are described by discrete data.

The explicit description of the stable category of maximal
Cohen-Macaulay modules over a commutative Gorenstein ring (also known
as the singularity category \cite{Buchweitz, BuchweitzAlone,Orlov}) is
a problem that has received much attention over the years.  This appears
to be in general a difficult problem and perhaps the best one can
hope for is a reduction to linear algebra, or in other words: the
representation theory of quivers. This is precisely what
Iyama and Yoshino have accomplished.

  The proofs
of Iyama and Yoshino are based on the machinery of
mutation in triangulated categories, a general theory developed by them.  In the
current paper we present two alternative approaches to the examples.
Hopefully the additional insight obtained in this way may  be
useful elsewhere.

\medskip

Our first approach applies to Example \ref{ref-1.2-1} and is inspired
by the treatment in \cite{KeRe} of Example \ref{ref-1.1-0} where the
authors used the fact that in this case the stable category $\uMCM(R)$
of maximal Cohen-Macaulay $R$-modules is a $2$-Calabi-Yau category
which has a cluster tilting object whose endomorphism ring is the path
algebra $kQ_3$ of the Kronecker quiver with $3$ arrows. Then they
invoke their acyclicity result (slightly specialized):
\begin{theorem} \cite[\S1, Thm]{KeRe} 
Assume that $\Tscr$ is $k$-linear algebraic Krull-Schmidt
 $2$-Calabi-Yau category with a cluster tilting object $T$ such
that $A=\End(T)$ is hereditary. Then there is an exact
 equivalence betweem $\Tscr$ and the orbit category
 $D^b(\mod(A))/(\tau[-1])$.
\end{theorem}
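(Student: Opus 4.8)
The plan is to realize $\Tscr$ and the orbit category $\Cscr_A:=D^b(\mod(A))/(\tau[-1])$ as perfect derived categories of one and the same differential graded (dg) algebra, the equivalence then emerging as a ``covering functor'' $D^b(\mod(A))\to\Tscr$ that factors through $\Cscr_A$. This fits the general recognition philosophy for cluster-type categories; the special feature here is that hereditariness of $A$ forces everything down to the classical cluster category of $A$.

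First I would reduce to a statement about dg algebras. Since $\Tscr$ is algebraic it admits a dg enhancement; let $B:=\RHom_\Tscr(T,T)$ be the dg endomorphism algebra of a lift of $T$. The functor $\RHom_\Tscr(T,-)$ induces a triangle functor $\Tscr\to\operatorname{per}(B)$ which is fully faithful on the thick subcategory generated by $T$; since a cluster tilting object generates $\Tscr$ (each $X$ fits into a triangle $T_1\to T_0\to X\to T_1[1]$ with $T_i\in\add T$) and $\Tscr$, being Krull-Schmidt, is idempotent complete, this is a triangle equivalence $\Tscr\xrightarrow{\ \sim\ }\operatorname{per}(B)$. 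Here $H^0B=\End_\Tscr(T)=A$, $H^1B=\Ext^1_\Tscr(T,T)=0$ by rigidity of $T$, $H^2B\cong DA$ by the $2$-Calabi-Yau property, and in general $H^iB\cong D\,H^{2-i}B$; so, in contrast with the classical tilting situation, $B$ has cohomology in infinitely many degrees, including negative ones, and is far from formal, so the comparison cannot be reduced to its cohomology ring.

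The key step is to lift the identity $A=H^0(B)$ to a morphism of $A_\infty$-algebras $A\to B$, viewing $A$ as an ordinary (formal) algebra. Such a lift is built one component at a time, and the obstruction to each extension lies in a Hochschild cohomology group $\operatorname{HH}^{\,n}(A,B)$ with $n\ge 2$; these all vanish for $A$ hereditary (so that $A$ admits a projective bimodule resolution of length one, whence $\operatorname{HH}^{\geq 2}(A,-)=0$). This is the one point where hereditariness of $A$ --- not merely finiteness of $\gldim A$ --- is essential: for a non-hereditary $2$-Calabi-Yau-tilted algebra one would here have to replace $A$ by a genuinely non-formal ($3$-Calabi-Yau) dg algebra, and would land in a generalized cluster category rather than the classical one. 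The $A_\infty$-morphism $A\to B$ turns $B$ into a left dg $A$-module, keeping its right $B$-structure, i.e.\ an $A$-$B$-bimodule $M$, and hence yields a triangle functor $\Phi\colon D^b(\mod(A))=\operatorname{per}(A)\to\operatorname{per}(B)\simeq\Tscr$, $M'\mapsto M'\Lotimes_A M$, with $\Phi(A)\cong T$.

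It remains to show $\Phi$ induces a triangle equivalence $\Cscr_A\xrightarrow{\ \sim\ }\Tscr$. Using the $2$-Calabi-Yau structure to compare the Serre functor $[2]$ of $\Tscr$ with the Auslander-Reiten translation of $D^b(\mod(A))$, one checks that $\Phi$ carries every $X$ and its translate $(\tau[-1])X$ to isomorphic objects of $\Tscr$, so $\Phi$ factors through $\Cscr_A$; that the induced map $\bigoplus_{m\in\ZZ}\Hom_{D^b(\mod(A))}\big(X,(\tau[-1])^{m}Y\big)\to\Hom_\Tscr(\Phi X,\Phi Y)$ is bijective (here $\gldim A\le 1$ is used once more, to evaluate both sides); and that $\Phi$ is essentially surjective since $T$ generates $\Tscr$. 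As $\Phi$ is exact by construction, this gives $\Tscr\simeq\Cscr_A=D^b(\mod(A))/(\tau[-1])$, which, an orbit category being unchanged when the twist is replaced by its inverse, is the usual cluster category of $A$. The main obstacle is the key step above: constructing the $A_\infty$-lift $A\to B$ --- equivalently, the rigidity statement that the whole dg algebra $\RHom_\Tscr(T,T)$, and not only its cohomology, is governed by $A$ together with the $2$-Calabi-Yau structure; everything else is routine dg-categorical bookkeeping built on the two standing hypotheses.
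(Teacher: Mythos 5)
Your overall architecture is sound, and it is essentially the architecture of the proof this statement actually receives: note that the present paper only \emph{cites} the $2$-Calabi-Yau result from \cite{KeRe}, but its own proof of the $3$-Calabi-Yau analogue (Theorem \ref{ref-1.3-2}, \S\ref{ref-3.4-18}) follows the same pattern you propose -- use hereditariness to produce a triangle functor from $D^b(\mod(A))$ to $\Tscr$ sending $A$ to $T$, descend to the orbit category, and conclude by d\'evissage with the cluster-tilting triangles. Your implementation of the lifting step via $A_\infty$-obstruction theory (vanishing of $\operatorname{HH}^{\ge 2}(A,-)$ for $A$ hereditary) is a legitimate alternative to the equivariant lifting lemmas \cite[Lemmas A.2.1, A.2.2]{KeRe} that the paper invokes, and your reduction $\Tscr\simeq\operatorname{per}(B)$, $B=\RHom_\Tscr(T,T)$, is correct (classical generation by $T$ from the cluster-tilting triangles plus idempotent completeness).

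The genuine gap is in the descent step, which you dismiss as ``routine dg-categorical bookkeeping''. Knowing that $\Phi X\cong\Phi((\tau[-1])X)$ object by object does not make $\Phi$ factor through $D^b(\mod(A))/(\tau[-1])$; and even a natural isomorphism $\Phi\circ(\tau[-1])\cong\Phi$ of triangle functors is not enough to produce an \emph{exact} functor on the orbit category. The orbit category is triangulated only via Keller's theorem \cite{Keller6}, and its universal property requires the isomorphism to be realized at the enhanced level, i.e.\ as an isomorphism $\Hom_k(A,k)[-2]\Lotimes_A M\cong M$ in the derived category of $A$-$B$-bimodules (equivalently, an equivariant isomorphism). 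Producing this bimodule-level isomorphism is a substantive use of the Calabi-Yau structure combined with a morphism-lifting argument: it is exactly the analogue of Lemma \ref{ref-3.4.2-23}, whose proof occupies most of \S\ref{ref-3.4-18} and hinges on \cite[Lemma A.2.2]{KeRe} and on the compatibility of the canonical triangle \eqref{ref-3.6-21} with the duality pairing. Likewise, the bijectivity of $\bigoplus_m\Hom_{D^b(\mod(A))}(X,(\tau[-1])^mY)\to\Hom_\Tscr(\Phi X,\Phi Y)$ does not follow from ``evaluating both sides'' (equality of dimensions is not bijectivity of the canonical map); one must reduce to the generator and run the d\'evissage with the triangles $T_1\to T_0\to X\to T_1[1]$ in both categories, as in the proof of the lemma asserting that $Q$ is an equivalence. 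So: right strategy, correct identification of where hereditariness enters, but the orbit-category factorization has to be constructed at the dg/bimodule level and proved, not asserted.
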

From this result they
obtain immediately that $\uMCM(R)$ is the orbit
category $D^b(\mod(kQ_3))/(\tau[-1])$. This gives a very satisfactory
description of $\uMCM(R)$ and implies in particular the results by
Iyama and Yoshino.

\medskip

In the first part of this paper we show that Example \ref{ref-1.2-1} is amenable to a
similar approach.  Iyama and Yoshino prove that $\uMCM(R)$ is a
$3$-Calabi-Yau category with a $3$-cluster tilting object $T$ such
that $\End(T)=k$ \cite[Theorem 9.3]{IY}. We show that
under these circumstances there is an analogue of the acyclicity result
of the first author and Reiten.
\begin{theorem} \label{ref-1.3-2} (see \S\ref{ref-3.4-18}) Assume that $\Tscr$ is $k$-linear algebraic Krull-Schmidt
 $3$-Calabi-Yau category with a $3$-cluster tilting object $T$ such
 that $\End(T)=k$.  Then there is an exact
 equivalence of $\Tscr$ with the orbit category
 $D^b(\mod(kQ_n))/(\tau^{1/2}[-1])$, $n=\dim \Ext^{-1}_\Tscr(T,T)$, where $Q_n$
 is the generalized Kronecker quiver with $n$ arrows and $\tau^{1/2}$
 is a natural square root of the Auslander-Reiten translate of
 $D^b(\mod(kQ_n))$, which on the pre-projective/pre-injective component
 corresponds to ``moving one place to the left''.
\end{theorem}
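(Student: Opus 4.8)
The overall strategy parallels the Keller--Reiten argument for the $2$-Calabi--Yau case, but with two new ingredients forced on us by the fact that $\End(T)=k$ and that we are in the $3$-Calabi--Yau setting: (i) the "cluster-tilted" algebra no longer controls things directly, so the relevant hereditary algebra must be extracted from a graded or $A_\infty$-refinement of the endomorphism dg algebra of $T$; and (ii) the orbit must be taken by $\tau^{1/2}[-1]$ rather than $\tau[-1]$, so a square root of the Auslander--Reiten translate must be constructed and shown to act on the relevant component.

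\emph{Step 1: Identify the endomorphism dg algebra.} Since $\Tscr$ is algebraic, write $\Tscr=\underline{\Escr}$ for a Frobenius category $\Escr$ (or realize it as a thick subcategory of the stable category of a dg algebra). Lift $T$ to an object of the ambient pretriangulated dg category and let $\Gamma=\operatorname{RHom}(T,T)$ be its endomorphism dg algebra, with cohomology $H^\ast\Gamma = \bigoplus_i \Ext^i_\Tscr(T,T)$. The hypotheses give $H^0\Gamma=k$ and, by $3$-Calabi--Yau duality, $\Ext^i_\Tscr(T,T)\cong D\Ext^{3-i}_\Tscr(T,T)$; combined with $3$-cluster tilting (which forces $\Ext^i_\Tscr(T,T)=0$ for $i\neq 0$ in the appropriate range, here for $i=1,2$ after accounting for the shift) one computes that the only nonvanishing groups are $H^0\Gamma=k$, $H^{-1}\Gamma=\Ext^{-1}_\Tscr(T,T)=:V$ with $\dim_k V=n$, and $H^{-3}\Gamma\cong DV$. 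So $\Gamma$ is, up to quasi-isomorphism, a dg algebra concentrated in degrees $0,-1,-3$ with these cohomologies.

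\emph{Step 2: Recognize $\Gamma$ as a Koszul-type dual of $kQ_n$.} The key point is that $kQ_n$, the path algebra of the generalized Kronecker quiver with $n$ arrows, has a well-understood (derived) Koszul dual: its trivial extension / the relevant dg model is concentrated in precisely the degrees appearing above, with $\Ext^1_{kQ_n}(S_0,S_1)=V^\ast$ of dimension $n$, and the $3$-Calabi--Yau completion of $kQ_n$ (in the sense of Keller) produces exactly the degree pattern $0,-1,-3$. Concretely, I would show $\Gamma$ is quasi-isomorphic to the $3$-Calabi--Yau completion $\Pi_3(kQ_n)$, or dually that $\operatorname{per}\Gamma \simeq D^b(\mod kQ_n)$ after passing to a suitable subcategory, by exhibiting a tilting-type object and matching $A_\infty$-structures; since $kQ_n$ is hereditary there are no higher multiplications to obstruct this, which is what makes the hypothesis "$\End(T)=k$, hereditary-shaped $\Ext$" do its work. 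This realizes $\Tscr$ (the thick closure of $T$) as a quotient of $D^b(\mod kQ_n)$ by an autoequivalence.

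\emph{Step 3: Identify the autoequivalence as $\tau^{1/2}[-1]$ and construct the square root.} Under the equivalence of Step 2, the Serre functor of $\Tscr$ is $[3]$ (the $3$-CY property), while $D^b(\mod kQ_n)$ has Serre functor $\tau[1]$. Tracking these through the Koszul/$3$-CY-completion correspondence shows that the orbit is by the functor $F$ with $F^2=\tau[-1]\cdot(\text{shift bookkeeping})$ — i.e. $F=\tau^{1/2}[-1]$ for the natural square root $\tau^{1/2}$ of $\tau$. To make sense of $\tau^{1/2}$, I would work on the preprojective and preinjective components of the AR-quiver of $D^b(\mod kQ_n)$: for the acyclic quiver $Q_n$ these components are of type $\ZZ Q_n$ (they are the whole module category plus shifts, since $Q_n$ has no regular modules when $n\geq 2$ only after care — actually for $n\geq 2$ there \emph{are} regular modules, so one restricts $\tau^{1/2}$ to the transjective part where "moving one place to the left" along the $\ZZ Q_n$-shape is well-defined), and define $\tau^{1/2}$ there as the combinatorial shift by one column. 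One then checks that the orbit category is well-defined (the functor acts freely, with the appropriate discreteness) and that the canonical functor $D^b(\mod kQ_n) \to D^b(\mod kQ_n)/(\tau^{1/2}[-1])$ is the one realizing $\Tscr$.

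\emph{Main obstacle.} The hard part is Step 2 together with the second half of Step 3: rigorously producing the equivalence at the \emph{dg/$A_\infty$ level} rather than just matching Euler forms or dimensions, and then pinning down the orbit functor precisely as $\tau^{1/2}[-1]$ (as opposed to merely some functor with square $\tau[-1]$). This requires a clean statement of the relationship between $3$-Calabi--Yau-type categories with a cluster tilting object of endomorphism ring $k$ and $3$-Calabi--Yau completions of hereditary algebras — essentially a $3$-CY, "$\End=k$" analogue of the Keller--Reiten theorem — and care with the subtlety that $Q_n$ has regular representations for $n\geq 2$, so the square root of $\tau$ and the "move one place left" description only live on the preprojective/preinjective (transjective) component, exactly as the statement of the theorem says. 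Verifying that the orbit category construction is insensitive to this (because the cluster tilting object and its iterated shifts all lie in the transjective part) is the delicate endgame.
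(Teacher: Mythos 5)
There is a genuine gap, and it sits exactly at the heart of the theorem: you never construct $\tau^{1/2}$ as an honest autoequivalence of $D^b(\mod(kQ_n))$. Defining it ``combinatorially, by moving one column on the transjective component'' does not produce an endofunctor of the derived category, and without an actual autoequivalence the orbit category $D^b(\mod(kQ_n))/(\tau^{1/2}[-1])$ that $\Tscr$ is to be compared with is not even defined; so the claim ``$F^2=\tau[-1]$ up to bookkeeping'' cannot be pinned down. In the paper the square root is the derived reflection functor $Rr_1$ at a vertex of $Q_n$ composed with an identification $\pi\colon W\to W^\ast$, where $W=\Ext^{-1}_\Tscr(T,T)$; the form $\pi$ is extracted canonically (up to scalar, using $\End(T)=k$) from the distinguished triangle $T^{W}\to T[-1]\to T[2]\to$ in $\Tscr$, and one proves that $a^2\cong\tau$ exactly when $\pi$ is self-adjoint or anti self-adjoint, which is in turn deduced from the $3$-Calabi--Yau property. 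None of this data (the canonical triangle, the bilinear form, its (anti)symmetry) appears in your outline, yet it is what makes ``a natural square root of $\tau$'' meaningful.

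Your Step 1 is also incorrect as stated: the cohomology of $\RHom(T,T)$ computed in $\Tscr$ is not concentrated in degrees $0,-1,-3$. By the $3$-CY property $\Ext^3_\Tscr(T,T)\cong\End(T)^\ast=k$ and $\Ext^4_\Tscr(T,T)\cong W^\ast$, and in the model $D^b(\mod(kQ_n))/(\tau^{1/2}[-1])$ the graded endomorphism algebra of the image of $P_1$ is nonzero in infinitely many degrees, both positive and negative. So the proposed recognition of $\Gamma$ as a Koszul dual or $3$-CY completion of $kQ_n$ collapses; note also that $\Tscr$ is not the $2$-cluster category $D^b(\mod(kQ_n))/(\tau[-2])$ (whose canonical cluster-tilting object has endomorphism algebra $kQ_n$, not $k$) but a further ``square-root'' quotient of it, which is precisely the twist your Step 2 does not address. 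Finally, even granting an identification of the underlying data, producing a triangle functor out of the orbit category requires lifting the isomorphism $a[-1]T\cong T$ to the dg/bimodule level: the paper does this by lifting $T$ to a $C$-equivariant object, verifying $C$-equivariance of the isomorphism $X\Lotimes_C S\cong S[1]$ (which again uses the definition of $\pi$), and invoking Keller's universal property of orbit categories, after which fully faithfulness and essential surjectivity follow from the approximation triangles $T^a\to T^b\oplus T[-1]^c\to N[1]\to$. You flag this as ``the hard part'' but supply no mechanism for it.
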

In the second part of this paper, which is logically independent of
the first  we give yet another approach to the examples \ref{ref-1.1-0},\ref{ref-1.2-1} based on the following observation which might have independent
interest.  
\begin{proposition} \label{ref-1.4-3} (see Prop.\ \ref{gradedcase}) Let $A=k+A_1+A_2\cdots$
 be a finitely generated commutative graded Gorenstein $k$-algebra with an isolated singularity. Let $\widehat{A}$
 be the completion of $A$ at $A_{\ge 1}$.  Let $\uMCM_{\gr}(A)$ be the stable
 category of graded maximal Cohen-Macaulay $A$-modules. Then the
 obvious functor $\uMCM_{\gr}(A)\r \uMCM(\widehat{A})$ induces an
 equivalence
\begin{equation}
\label{ref-1.1-4}
 \uMCM_{\gr}(A)/(1) \cong \uMCM(\widehat{A})
\end{equation}
where $M\mapsto M(1)$
is the
 shift functor for the grading.
\end{proposition}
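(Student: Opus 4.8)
The plan is to work directly with the completion functor $\widehat{(-)}\colon \gr A\to\mod\widehat A$, $M\mapsto\widehat M:=M\otimes_A\widehat A$. Since $A$ is Noetherian this is exact, it preserves the maximal Cohen--Macaulay property (completion preserves depth and Krull dimension, and $\widehat A$ is Gorenstein), and it sends graded frees to frees; so it induces an exact functor $F\colon\uMCM_{\gr}(A)\to\uMCM(\widehat A)$. As the underlying module of $M(1)$ is literally that of $M$, one has $\widehat{M(1)}=\widehat M$, hence $F\circ(1)\cong F$ and $F$ factors through $\bar F\colon\uMCM_{\gr}(A)/(1)\to\uMCM(\widehat A)$. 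It then remains to check that $\bar F$ is fully faithful and essentially surjective.

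For full faithfulness I would first identify, for graded MCM modules $M,N$,
\[
\Hom_{\uMCM_{\gr}(A)/(1)}(M,N)=\bigoplus_{i\in\ZZ}\underline{\Hom}_{\gr A}(M,N(i))=\underline{\Hom}_A(M,N),
\]
the ordinary (ungraded) stable Hom over $A$: for $M$ finitely generated the graded pieces of $\Hom_A(M,N)$ reassemble the whole of it, and the same holds for the subspace of maps factoring through a graded free. Flat base change of $\Hom$ for finitely presented modules then gives $\underline{\Hom}_A(M,N)\otimes_A\widehat A\cong\underline{\Hom}_{\widehat A}(\widehat M,\widehat N)$, the map being the one induced by $\bar F$. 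Finally the isolated--singularity hypothesis forces $\underline{\Hom}_A(M,N)$ to have finite length: at any prime $\mf p\neq A_{\geq1}$ the ring $A_\mf p$ is regular, so $M_\mf p$ is free and $\underline{\Hom}_{A_\mf p}(M_\mf p,N_\mf p)=0$; thus $\underline{\Hom}_A(M,N)$ is supported at $A_{\geq1}$, killed by a power of $A_{\geq1}$, hence already complete, so $\underline{\Hom}_A(M,N)\otimes_A\widehat A=\underline{\Hom}_A(M,N)$. Together these two points give the isomorphism on Hom--spaces (and show the direct sum above is finite).

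The substantial step is essential surjectivity: every MCM $\widehat A$-module $L$ should be isomorphic in $\uMCM(\widehat A)$ to $\widehat M$ for a graded MCM module $M$. One clean reduction is available: since $\gr A$ is Krull--Schmidt ($\Hom_A(M,M)_0$ is a finite--dimensional $k$-algebra), idempotents split in $\uMCM_{\gr}(A)$ and in its orbit category, and fullness of $\bar F$ shows the essential image is closed under syzygies, cones and summands, hence is a thick subcategory of $\uMCM(\widehat A)$; so it suffices to reach a generating set. I would try to produce generators by \emph{algebraising} presentations: given $L$, take a minimal free presentation $\widehat A^{b_1}\xrightarrow{\phi}\widehat A^{b_0}\to L\to0$ and approximate the entries of $\phi$ by elements of $A$ (they lie in the inverse limit of the $A/A_{\geq1}^n$); a sufficiently close presentation has isomorphic cokernel, but one must arrange the approximating matrix to be \emph{homogeneous}. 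Here the isolated--singularity hypothesis is again decisive: the deformation and obstruction data of $L$ are governed by the finite--dimensional stable Hom--spaces above, so an Artin--Elkik--type smoothing of the relevant presentation scheme can be performed compatibly with the $\GG_m$-action fixing the unique singular point, producing a homogeneous presentation over $A$ whose cokernel $M$ is checked to be MCM (its higher $\Ext$ into $A$ is supported only at $A_{\geq1}$, which can be read off after completion) and satisfies $\widehat M\cong L$. A shorter but less self--contained alternative is to invoke Orlov's comparison theorem between the singularity category of a graded ring and that of its completion.

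The main obstacle, as indicated, is exactly this algebraisation: turning ``analytic'' MCM modules over $\widehat A$ into \emph{graded} algebraic ones while keeping presentations homogeneous and controlling cokernels — or, in the dévissage formulation, checking that the primes of $\widehat A$ not extended from $A$ contribute nothing to the singularity category because they lie over the regular locus. The finiteness produced by the isolated singularity, already the engine of full faithfulness, is precisely what should make this go through.
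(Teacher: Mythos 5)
Your treatment of full faithfulness is essentially sound and is in fact a pleasant, more module-theoretic variant of what the paper does: you identify $\Hom$ in the orbit category with the ungraded stable $\Hom$, and then use that for an isolated singularity $\underline{\Hom}_A(M,N)$ is supported at $A_{\geq 1}$, hence of finite length, hence unchanged by completion (the paper instead proves the graded-to-ungraded comparison by a syzygy/colimit computation in Lemma \ref{fff} and handles the completion step via an adjunction argument in Proposition \ref{theorem:first_theorem}). Your reduction of essential surjectivity is also the right one: the image of $\bar F$ is a thick subcategory of $\uMCM(\widehat{A})$, so it suffices to exhibit a classical generating set lying in the image.

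The genuine gap is the last step. What you propose — algebraising a minimal presentation of an arbitrary MCM $\widehat{A}$-module $L$ and making it homogeneous by an ``Artin--Elkik-type smoothing compatible with the $\GG_m$-action'' — is not an argument but a wish, and it aims at a statement strictly stronger than the proposition: that every MCM $\widehat{A}$-module is (stably) extended from a graded module. The paper explicitly warns that gradability of MCM $\widehat{A}$-modules does \emph{not} follow formally from the proposition (it is a separate, delicate result of Auslander--Reiten in the finite-type case), and its Example \ref{example:not_an_equivalence} (the node, where $\mf{p}=US$ is not even stably extended from the non-complete local ring) shows exactly why naive approximation of presentation matrices fails outside the henselian/complete setting; Elkik's theorem only bridges $A^h\to\widehat{A}$, not $A\to A^h$, and nothing in your sketch controls this. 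The missing idea is the generation statement, Proposition \ref{theorem:dbsing_generated_k}: because $\widehat{A}$ has an isolated singularity, $\dbsing(\widehat{A})\cong\uMCM(\widehat{A})$ is classically generated by the residue field $k$ — proved in the paper by writing any $M$ as a homotopy colimit of Koszul complexes $\ukoszul{\bold{a}^i}{A}\otimes_A M$ inside the enlarged category $\dbsinge(\widehat{A})$, using that the \v{C}ech complex $\cech{\bold{a}}{A}\otimes_A M$ is perfect, so that $M$ is a direct summand of a complex with finite-length cohomology. Since $k=A/A_{\geq 1}$ is visibly graded, this generator lies in the image of $\bar F$, and your own thickness reduction then finishes the proof with no algebraisation at all. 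Without this (or an equivalent) generation argument, your proof of essential surjectivity does not go through.
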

In this proposition the quotient $\uMCM_{\gr}(A)/(1)$ has to be
understood as the triangulated/Karoubian hull (as explained in
\cite{Keller6}) of the naive quotient of $\uMCM_{\gr}(A)$ by the shift
functor $?(1)$. This result is similar in spirit to \cite{A3} which
treats the finite representation type case.  Note however that one of
the main results in loc.\ cit.\ is that in case of finite
representation type case \emph{every}\/ indecomposable maximal
Cohen-Macaulay $\hat{A}$-module is gradable. This does not seem to be a formal
consequence of Proposition \ref{ref-1.4-3}. It would be interesting to investigate this further. 

In \S\ref{ref-8-57} we show that at least rigid Cohen-Macaulay modules are
always gradable so they are automatically in the image of $\MCM_{\gr}(A)$.
We expect this to be well known in some form but we have been unable to locate
a reference.

\medskip

Hence in order to understand $\MCM(\widehat{A})$ it is sufficient to
understand $\MCM_{\gr}(A)$. The latter is the graded singularity
category \cite{Orlov2} of $A$ and by \cite[Thm 2.5]{Orlov2} it
is related to $D^b(\coh(X))$ where $X=\Proj A$.

In Examples \ref{ref-1.1-0},\ref{ref-1.2-1} $R$ is the completion
of a graded ring $A$ which is the Veronese of a polynomial ring. Hence
$\Proj A$ is simply a projective space. Using
Orlov's results and the existence of exceptional collections
on projective space we get very quickly
in Example \ref{ref-1.1-0}
\[
\uMCM_{\gr}(A)\cong D^b(\mod(kQ_3))
\]
and in Example \ref{ref-1.2-1}
\[
\uMCM_{\gr}(A)\cong D^b(\mod(kQ_6))
\]
(where here and below $\cong$ actually stands for a quasi-equivalence between
the underlying DG-categories).
Finally it suffices to observe that in Example \ref{ref-1.1-0} we have
$?(-1)=\tau[-1]$ and in Example \ref{ref-1.2-1} we have $?(-1)=\tau^{1/2}[-1]$
(see \S\ref{ref-7-51} below).

\medskip

%
%

Finally we mention the following interesting side result 
\begin{proposition}  \label{ref-1.5-5}
\label{ref-1.5-6} 
Let $(R,m)$ be a Gorenstein local ``G-ring'' \textup{(}for example $R$ may be
essentially of finite type over a field\textup{)} with an isolated
singularity. 
 Then the natural functor
\begin{equation}
\label{ref-1.2-7}
\widehat{R}\otimes_R ?:\uMCM(R)\r \uMCM(\widehat{R})
\end{equation}
is an equivalence up to direct summands.  In partular every
maximal Cohen-Macaulay module over $\widehat{R}$ is a direct summand of
the completion of a maximal Cohen-Macaulay module over $R$.
\end{proposition}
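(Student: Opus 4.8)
The plan is to produce a quasi-inverse (up to summands) to the completion functor by means of Elkik-type approximation/Artin approximation, exploiting the fact that a $G$-ring is excellent and hence that $\widehat R$ is a filtered colimit of $R$-algebras which are ``as good as'' $R$ in a Henselian sense. First I would recall that for a Gorenstein local ring with an isolated singularity the stable category $\uMCM(R)$ is equivalent to the singularity category $\dbsing(R)=D^b(\mod R)/\Perf(R)$ by Buchweitz's theorem, and that $\uMCM(\widehat R)$ is likewise equivalent to $\dbsing(\widehat R)$; this lets me work with finitely generated modules and bounded complexes rather than with the more rigid category of MCM modules directly. The functor \eqref{ref-1.2-7} is then induced by $\widehat R\otimes_R-$, which is exact since $\widehat R$ is flat over $R$.

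The main technical input is \emph{Elkik's theorem} (or equivalently Artin approximation for the excellent Henselian pair, here obtained from the $G$-ring hypothesis via Popescu's theorem: $\widehat R$ is a filtered colimit of smooth $R$-algebras). Concretely, an MCM $\widehat R$-module $\widehat M$ is, up to a finite free summand, defined by a matrix factorization / a presentation whose relations can be approximated by solutions over a finite-type (even essentially-of-finite-type) smooth $R$-algebra, and then — because $R$ is Henselian along $m$ and the singularity is isolated so that the relevant obstruction modules are of finite length — these solutions can be pushed down to $R$ itself after possibly stabilizing by frees. Thus every $\widehat M$ is a direct summand of $\widehat R\otimes_R N$ for some finitely generated $R$-module $N$, and a further application of approximation shows $N$ may be taken MCM: outside the isolated singular point $N$ is automatically locally free of the right depth, and at the closed point one corrects the depth by the standard syzygy trick, which is preserved under completion. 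This gives essential surjectivity up to summands.

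For full faithfulness up to summands I would argue on morphism spaces: $\Hom_{\uMCM(\widehat R)}(\widehat R\otimes M,\widehat R\otimes N)$ is, by the isolated-singularity hypothesis, a finite length $\widehat R$-module, hence equals $\widehat R\otimes_R\Hom_{\uMCM(R)}(M,N)$ by faithful flatness together with the fact that stable Homs between MCM modules over an isolated singularity are finite length and so unaffected by completion — here one uses that $\Ext^i_R(M,N)$ has finite length for $i\gg0$ and that completion is exact and an isomorphism on finite length modules. Since the same vanishing gives that idempotents in these finite-length endomorphism rings lift, one concludes that the completion functor is fully faithful after passing to the idempotent completion of each side, i.e.\ it is an equivalence up to direct summands. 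The last sentence of the proposition is then just the statement of essential surjectivity up to summands.

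The step I expect to be the genuine obstacle is the descent of the \emph{module itself} (not merely its presentation matrices) from a smooth $R$-algebra down to $R$ while keeping it MCM: Popescu/Elkik gives solutions over \emph{some} smooth $R$-algebra $R'$ with $R\to R'\to\widehat R$, and one must use Henselianity of $(R,m)$ together with the finiteness coming from the isolated singularity to specialize $R'\to R$ compatibly; controlling the MCM property (equivalently the depth at the closed point) through this specialization, and only obtaining the result up to free summands rather than on the nose, is exactly where the ``up to direct summands'' qualifier in the statement is forced. A clean way to organize this is to phrase everything in terms of matrix factorizations when $R$ is a hypersurface and, in general, in terms of the dg-category of the singularity category, where approximation of an object is approximation of a finite diagram of free modules and maps — a finitely-presented-type problem to which Artin approximation applies directly.
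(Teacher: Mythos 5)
Your full-faithfulness argument is essentially sound (for MCM modules over an isolated singularity the stable Hom's are of finite length and commute with the flat base change $R\to\widehat R$), and it is in fact more elementary than what the paper does; but the essential surjectivity step --- the actual content of ``up to direct summands'' --- has a genuine gap. You descend a presentation of an MCM $\widehat R$-module to a smooth $R$-algebra via Popescu/Artin approximation and then say the solution ``can be pushed down to $R$ itself'' because ``$R$ is Henselian along $m$''. But an arbitrary local G-ring is \emph{not} Henselian, and Henselianity is exactly what approximation needs: Elkik/Artin gets you from $\widehat R$ down to the Henselization $R^h$ (this is precisely the content of Remark \ref{remark:henselization} in the paper, where it shows $\dbsing(R^h)\to\dbsing(\widehat R)$ is an honest equivalence), and the passage from $R^h$ to $R$ is the whole difficulty, which your sketch does not address. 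Moreover the fix cannot be ``stabilizing by frees'': in the paper's node example (Example \ref{example:not_an_equivalence}) the module $\mf{p}$ over $\widehat A$ is not of the form $\widehat N\oplus(\text{free})$ for any $A$-module $N$; it only becomes extended after adding the genuinely different summand $\mf{q}\cong\Sigma\mf{p}$. So any argument producing objects of $\uMCM(R)$ whose completions \emph{contain} a given $\widehat M$ as a summand must do something beyond approximation plus free stabilization.

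The paper's route is entirely different and avoids approximation: it proves (Proposition \ref{theorem:dbsing_generated_k}) that the isolated singularity hypothesis is equivalent to the residue field $k$ being a classical generator of $\dbsing(R)$, using Koszul and \v{C}ech complexes and homotopy colimits in the enlarged category $\dbsinge(R)$ to exhibit any $M$ as a direct summand of a complex with finite-length cohomology. Since $R$ and $\widehat R$ share the residue field, and the unit of the adjunction between $-\otimes_R\widehat R$ and restriction of scalars is an isomorphism on $k$, the completion functor is fully faithful on the thick subcategory generated by $k$, i.e.\ on all of $\dbsing(R)$; the G-ring hypothesis enters only to guarantee that $\widehat R$ again has an isolated singularity, so that $\dbsing(\widehat R)$ is also classically generated by $k$ and the thick closure of the image is everything. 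If you want to salvage an approximation-style proof, you would need to supply the missing step from $R^h$ to $R$ up to summands, which is not a consequence of the tools you cite.
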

The original proof (by
the first and the third author) of this result was unnecessarily complicated. After the paper was put on the arXiv
Daniel Murfet (who has become the second author) informed us about the existence of a
much nicer proof in the context of
singularity categories (see Proposition \ref{theorem:first_theorem}). The
same argument also applies to Proposition \ref{ref-1.4-3}. So we
dropped our original proofs and put the new argument in an appendix to
which we refer.

Meanwhile Orlov \cite{Orlov04} has proved (independently and using
different methods) a very general result which implies in particular
Proposition \ref{ref-1.5-6}.


\section{Acknowledgement}

We thank Osamu Iyama, Idun Reiten, Srikanth Iyengar, and Amnon Neeman for commenting on a preliminary version of this manuscript. The second author thanks Kyoji Saito, Kazushi Ueda and Osamu Iyama for pointing out the properties of the Henselization in Remark \ref{remark:henselization}, and Apostolos Beligiannis for discussing his work \cite{BeligiannisContra}.

\section{Notations and conventions}
We hope most notations are self explanatory but nevertheless we list
them here. If $R$ is a ring then $\Mod(R)$ and $\mod(R)$ denote
respectively the category of all left $R$-modules and the full
subcategory of finitely generated $R$-modules. The derived category of
all $R$-modules is denoted by $D(R)$.  If $R$ is graded then we use
$\Gr(R)$ and $\gr(R)$ for the category of graded left modules and its
subcategory of finitely generated modules. The shift functor on $\Gr(R)$
is denoted by $?(1)$. Explicitly $M(1)_i=M_{i+1}$. 
If we want to refer to
right modules then we use $R^\circ$ instead of $R$. If $X$ is a scheme
then $\Qch(X)$ is the category of quasi-coherent $\Oscr_X$-modules. If
$X$ is noetherian then $\coh(X)$ is the category of coherent
$\Oscr_X$-modules. We are generally very explicit about which
categories we use. E.g.\ we write $D^b(\mod(R))$ rather than something
like $D^b_f(R)$. If $R$ is graded and $M$, $N$ are graded $R$-modules
then $\Ext^i_R(M,N)$ is the ungraded $\Ext$ between $M$ and $N$.  If we need  $\Ext$ in the category
of graded $R$-modules then we write $\Ext^i_{\Gr(R)}(M,N)$.
\section{First approach to the second example}
\subsection{Some preliminaries on tilting complexes}
Let $C,E$ be rings. We denote the unbounded derived category of right
$C$-modules by $D(C^\circ)$.
\def\Eq{\operatorname{Eq}}
\def\Tilt{\operatorname{Tilt}}%
We let $\Eq(D(C^\circ),D(E^\circ))$ be the set of triangle equivalences
of $D(C^\circ)\r D(E^\circ)$ modulo natural isomorphisms.
Define
$\Tilt(C,E)$ as the set of pairs $(\phi,T)$ where $T$ is a perfect
complex generating $D(E^\circ)$ and $\phi$ is an isomorphism $C\r
\RHom_E(T)$.  Associated to $(\phi,T)\in \Tilt(C,E)$ there is a
canonical equivalence $\theta:D(C^\circ)\r D(E^\circ)$ such that
$\theta(C)=T$.  It may be constructed either directly
\cite{Ri} or using DG-algebras \cite{Keller10}.  The induced map
\[
\Tilt(C,E)\r \Eq(D(C^\circ),D(E^\circ))
\]
is obviously injective (as it is canonically split), but not known to be
surjective. Below we will  informally refer to the elements of
$\Tilt(C,E)$ as tilting complexes.
\subsection{A square root of $\tau$  for a
 generalized Kronecker quiver}
\label{ref-3.2-8}
Let $W$ be a finite dimensional $k$-vector space and
let $C$ be the path algebra of the quiver\footnote{We use the convention that multiplication in the path algebra is concatenation. So representations
correspond to right modules. }
\begin{equation}
\label{ref-3.1-9}
\psfrag{V}[][]{$W$}
\psfrag{1}[][]{$1$}
\psfrag{2}[][]{$2$}
\includegraphics[width=3cm]{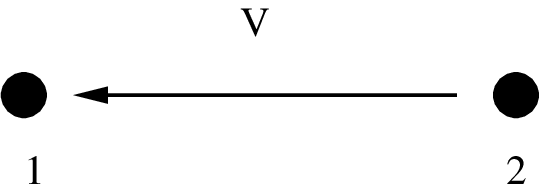}
\end{equation}
Let $E$ be the path algebra of the quiver
\[
\psfrag{V}[][]{$W^\ast$}
\psfrag{1}[][]{$1$}
\psfrag{2}[][]{$2$}
\includegraphics[width=3cm]{beilinson_mini}
\]
which we think of as being obtained from \eqref{ref-3.1-9} by ``inverting the arrows'' and renumbering the vertices $(1,2)\mapsto (2,1)$. 

Let $P_i$,$I_i,S_i$ be respectively the projective, injective and simple right
$C$-module corresponding to vertex $i$. For $E$ we use $P'_i$, $I'_i$, $S'_i$.
Let $r_i:\mod(C^\circ)\r\mod(E^\circ)$ be the reflection functor at vertex $i$. Recall that if
$(U,V)$ is a representation of $C$ then $r_1(U,V)$ is given by $(V,U')$
where $U'=\ker(V\otimes W\r U)$ (taking into account the renumbered vertices).

The right derived functor $Rr_1$ of $r_1$ defines an
equivalence $D(C^\circ) \r D(E^\circ)$. It is obtained from the tilting
complex $S'_2[-1]\oplus P'_1$ \cite{ARP}. One has  (see \cite{Gabriel2})
\begin{equation}
\label{ref-3.2-10}
Rr_1\circ Rr_1=\tau_C
\end{equation}
where $\tau_C$ is the Auslander Reiten translate on $D(C^\circ)$. 
Assume now that $W$ is equipped with an isomorphism $\pi:W\r
W^\ast$.  Then $\pi$ yields an equivalence
$D(E^\circ)\cong D(C^\circ)$, which we denote by the same symbol. We use
the same convention for the transpose isomorphism $\pi^\ast:W\r W^\ast$.
\begin{lemmas}
\label{ref-3.2.1-11}
We have $r_1\circ \pi^{-1}=\pi^\ast\circ r_1$ as functors
$D(C^\circ)\r D(C^\circ)$.
\end{lemmas}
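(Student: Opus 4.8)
The plan is to unwind all the functors involved into explicit linear-algebra recipes, check the asserted identity directly on representations, and then transport it to the derived category. Recall the shapes of the players: $\pi\colon W\to W^{\ast}$ and its transpose $\pi^{\ast}\colon W\to W^{\ast}$ each induce an isomorphism of path algebras $C\to E$ — fixing the vertex idempotents and carrying the arrow $w\in W$ of $C$ to $\pi(w)$, resp.\ $\pi^{\ast}(w)$, in $W^{\ast}$ — and the equivalences written $\pi,\pi^{\ast}\colon D(E^{\circ})\to D(C^{\circ})$ are the associated restriction-of-scalars functors, so that $\pi^{-1}\colon D(C^{\circ})\to D(E^{\circ})$ is restriction along the inverse isomorphism $E\to C$. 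In particular $\pi$, $\pi^{-1}$ and $\pi^{\ast}$ are exact. The reflection functor $r_{1}$ at the sink vertex $1$ is left exact, and by assumption $Rr_{1}$ is the corresponding derived equivalence. A right derived functor composed on either side with an exact equivalence is just the right derived functor of the composite, so it suffices to produce a natural isomorphism
\[
r_{1}\circ\pi^{-1}\ \xrightarrow{\ \sim\ }\ \pi^{\ast}\circ r_{1}
\]
of functors $\mod(C^{\circ})\to\mod(C^{\circ})$; applying $R(-)$ then yields the statement on $D(C^{\circ})$ (with $r_{1}$ there understood as $Rr_{1}$).

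Fix a $C$-representation $M=(M_{1},M_{2},\mu)$ with $\mu\colon M_{2}\otimes W\to M_{1}$, vertex $1$ being the sink. On the one side, $\pi^{-1}(M)$ is the $E$-representation on the same spaces $M_{1},M_{2}$ with structure map $\mu\circ(1_{M_{2}}\otimes\pi^{-1})\colon M_{2}\otimes W^{\ast}\to M_{1}$, and $r_{1}$ carries it to the $C$-representation having $M_{2}$ at vertex $1$, $\ker\!\big(\mu\circ(1_{M_{2}}\otimes\pi^{-1})\big)=(1_{M_{2}}\otimes\pi)(\ker\mu)\subseteq M_{2}\otimes W^{\ast}$ at vertex $2$, and structure map given by the inclusion of this kernel into $M_{2}\otimes W^{\ast}$, tensored with $W$ and followed by the evaluation pairing. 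On the other side, $r_{1}(M)$ is the $E$-representation with $M_{2}$ at vertex $1$, $\ker\mu\subseteq M_{2}\otimes W$ at vertex $2$, and structure map the inclusion tensored with $W^{\ast}$ followed by the evaluation pairing; and $\pi^{\ast}$ turns this into the $C$-representation on the same spaces whose structure map is the previous one precomposed with $1_{M_{2}}\otimes\pi^{\ast}$. Let $\alpha_{M}\colon\pi^{\ast}r_{1}(M)\to r_{1}\pi^{-1}(M)$ be the identity on the vertex-$1$ space $M_{2}$ together with the restriction of $1_{M_{2}}\otimes\pi$ on the vertex-$2$ spaces; it is well defined, clearly natural in $M$, and invertible because $\pi$ is.

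It remains to check that $\alpha_{M}$ intertwines the two structure maps, both of which, transported along $\alpha_{M}$, become maps $\ker\mu\otimes W\to M_{2}$. Writing a general element of $\ker\mu$ as $\sum_{i}m_{i}\otimes w_{i}\in M_{2}\otimes W$, one composite sends $\big(\sum_{i}m_{i}\otimes w_{i}\big)\otimes w$ to $\sum_{i}\pi(w_{i})(w)\,m_{i}$ and the other to $\sum_{i}\pi^{\ast}(w)(w_{i})\,m_{i}$, and these agree because $\pi^{\ast}(w)(w_{i})=\pi(w_{i})(w)$ — which is exactly the defining property of the transpose. This supplies the natural isomorphism, hence the lemma. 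There is no deep obstacle; the only thing demanding care — and the source of the slight asymmetry in the statement — is keeping track of which copy of $W$, $W^{\ast}$, or $(W^{\ast})^{\ast}\cong W$ sits at each vertex after each application of $r_{1}$ and each relabelling. The transpose $\pi^{\ast}$ rather than $\pi$ appears for precisely this reason: the reflection functor swaps the arrow space for its dual, so conjugating it by the identification $\pi$ forces $\pi$ to be replaced by $\pi^{\ast}$ on the other side.
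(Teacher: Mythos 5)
Your proposal is correct and follows essentially the same route as the paper: unwind $\pi$, $\pi^{\ast}$ as restriction along the algebra isomorphisms induced by $\pi:W\to W^{\ast}$ and its transpose, compute both composites on a representation as the kernel of the same map $M_2\otimes W^{\ast}\to M_1$, and match the induced actions via the identity $\pi^{\ast}(w)(w')=\pi(w')(w)$. You are merely more explicit than the paper (which leaves the kernel identification and the action check as ``one checks'') about the natural isomorphism, the element-level verification, and the passage to the derived category.
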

\begin{proof}
 Let $(U,V)$ be a representation of $C$ determined by a linear map
 $c:V\otimes W\r U$ and put $(V,U'')=(r_1\circ \pi^{-1})
 (U,V)$. Then one checks that $U''$ is given by the exact sequence
\[
0\r U''\r V\otimes W^\ast \xrightarrow{c\circ (\pi^{-1}\otimes \id)} U
\r 0
\]
where the first non-trivial map induces the action $U''\otimes W\r
V$. Similarly if we put $(V,U')=(\pi^\ast\circ r_1)
 (U,V)$ then one gets the same sequence
\[
0\r U'\r V\otimes W^\ast\xrightarrow{c\circ (\pi^{-1}\otimes \id)} U
\]
where the first non-trivial map again yields the action $U'\otimes W \r
V$. Thus we have $(V,U')=(V,U'')$.
\end{proof}
Below we
put $a=\pi\circ Rr_1$.
\begin{lemmas} \label{ref-3.2.2-12} One has $(\pi^\ast\circ \pi^{-1})\circ a^2=\tau$.
In particular $\tau\cong a^2$ if and only
if $\pi$ is self-adjoint or anti self-adjoint.
\end{lemmas}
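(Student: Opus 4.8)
The plan is to deduce the identity formally from Lemma~\ref{ref-3.2.1-11} and \eqref{ref-3.2-10}, treating $\pi$, $\pi^\ast$ and $\pi^{-1}$ throughout as the exact equivalences they induce between $D(C^\circ)$ and $D(E^\circ)$, so that all compositions below are literal equalities of functors. Since $\pi^{-1}$ and $\pi^\ast$ are exact, Lemma~\ref{ref-3.2.1-11} (in which $r_1$ is to be read as $Rr_1$, that being what makes the two sides functors on $D(C^\circ)$) is the identity $\pi^\ast\circ Rr_1=Rr_1\circ\pi^{-1}$ of endofunctors of $D(C^\circ)$. With $a=\pi\circ Rr_1$ one then computes, inserting $\pi^{-1}\circ\pi=\id$ twice and using this identity in the middle step,
\[
(\pi^\ast\circ\pi^{-1})\circ a^2
=\pi^\ast\circ(\pi^{-1}\circ\pi)\circ Rr_1\circ\pi\circ Rr_1
=\pi^\ast\circ Rr_1\circ\pi\circ Rr_1
=Rr_1\circ(\pi^{-1}\circ\pi)\circ Rr_1
=Rr_1\circ Rr_1=\tau .
\]
This proves the first assertion.

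For the second assertion, since $a^2$ is an equivalence, $\tau\cong a^2$ holds if and only if $\pi^\ast\circ\pi^{-1}\cong\id_{D(C^\circ)}$. Now $\pi^\ast\circ\pi^{-1}$ is the autoequivalence of $D(C^\circ)$ obtained by restriction along the algebra automorphism of $C$ determined by the linear automorphism $g:=\pi^{-1}\circ\pi^\ast$ of $W$ (up to replacing $g$ by $g^{-1}$, which is irrelevant here). I would then invoke the elementary fact that such an autoequivalence is isomorphic to the identity functor precisely when $g$ is a scalar: multiplying the structure map of a representation by a fixed scalar corresponds to an inner automorphism of $C$, hence to a functor isomorphic to the identity; conversely, a natural isomorphism to the identity restricts to the indecomposable projectives $P_1,P_2$, on which it is given by scalars $\mu_1,\mu_2\in k^\times$ since $\End(P_i)=k$, and naturality along the morphisms in $\Hom(P_1,P_2)\cong W$ forces $g$ to act as multiplication by $\mu_1/\mu_2$. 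Hence $\tau\cong a^2$ if and only if $\pi^\ast=\mu\,\pi$ for some $\mu\in k^\times$. Transposing this relation and using $(\pi^\ast)^\ast=\pi$ gives $\pi=\mu^2\,\pi$, whence $\mu^2=1$ and $\mu=\pm1$, i.e.\ $\pi$ is self-adjoint or anti self-adjoint; and conversely if $\pi^\ast=\pm\pi$ then $g=\pm\id_W$, so $\pi^\ast\circ\pi^{-1}\cong\id$ and $\tau\cong a^2$.

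There is no deep obstacle here. The two points that call for a little care are the observation that the representation-level identity of Lemma~\ref{ref-3.2.1-11} descends to $D(C^\circ)$ --- which it does because $\pi^{\pm1}$ and $\pi^\ast$ are exact equivalences --- and the characterization of when the autoequivalence of $D(C^\circ)$ induced by an element of $\GL(W)$ is trivial, which is what licenses the passage from $\pi^\ast\circ\pi^{-1}\cong\id$ to the scalar relation $\pi^\ast=\mu\,\pi$.
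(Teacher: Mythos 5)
Your argument is correct and is essentially the paper's own: the displayed chain of equalities is precisely the ``straightforward verification using Lemma \ref{ref-3.2.1-11} and \eqref{ref-3.2-10}'' that the paper's proof alludes to, and your characterization of when the autoequivalence induced by $g\in\GL(W)$ is isomorphic to the identity (namely, $g$ scalar) is the point the paper leaves as ``easy to see'' in the proof of Lemma \ref{ref-3.3.1-15}. No gaps; you have merely written out the details the paper omits.
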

\begin{proof} This is a straightforward verification using
Lemma \ref{ref-3.2.1-11} and \eqref{ref-3.2-10}.
\end{proof}
For use below we record
\begin{align*}
aP_2&=P_1\\
aP_1&=I_2[-1]\\
aI_2&=I_1
\end{align*}
\subsection{A $3$-Calabi-Yau category with a $3$-cluster tilting object}
\label{ref-3.3-13}
We let the notations be as in the previous section,

Put $\Hscr=D^b(\mod(C^\circ))$, $\Dscr=\Hscr/a[-1]$.
As $\Hscr$ is hereditary we have
\[
\Ind(\Dscr)=\Ind(\Hscr)/a[-1]
\]
Inspection reveals that
\begin{equation}
\label{ref-3.3-14}
\Ind(\Dscr)=\Ind(\Hscr)\cup \{ I_2[-1]\}
\end{equation}
\begin{lemmas} \label{ref-3.3.1-15} $\Dscr$ is $3$-Calabi-Yau if and only if $\pi$
is self-adjoint or anti self-adjoint.
\end{lemmas}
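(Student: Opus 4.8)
The plan is to compute the Serre functor of $\Dscr=\Hscr/a[-1]$ directly, using the general principle that the Serre functor of an orbit category $\Hscr/F$ (in the sense of the triangulated hull of \cite{Keller6}) is $S_\Hscr \circ F^{-1}$ on the nose, provided $F$ commutes with $S_\Hscr$ up to a natural isomorphism; here $S_\Hscr$ is the Serre functor of $\Hscr=D^b(\mod(C^\circ))$ and $F=a[-1]$. For the hereditary category $D^b(\mod(C^\circ))$ we have $S_\Hscr=\tau[1]$. So the Serre functor of $\Dscr$ should be
\[
S_\Dscr = \tau[1]\circ (a[-1])^{-1} = \tau\circ a^{-2}\circ [2],
\]
and $\Dscr$ is $3$-Calabi-Yau precisely when $\tau\circ a^{-2}\cong \id$, i.e. $\tau\cong a^2$. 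By Lemma \ref{ref-3.2.2-12} this last condition is equivalent to $\pi$ being self-adjoint or anti self-adjoint, which is exactly the claim.

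The key steps, in order, would be: first, verify that $a$ (equivalently $Rr_1$ followed by $\pi$) commutes with $\tau_C$ up to natural isomorphism — this is automatic since $\tau_C$ is itself built from $Rr_1$ via \eqref{ref-3.2-10}, so $a$ and $\tau$ commute; hence $a[-1]$ commutes with $S_\Hscr=\tau[1]$, and the orbit category $\Dscr$ inherits a Serre functor. Second, invoke the description of the Serre functor of the triangulated hull of an orbit category to get $S_\Dscr\cong\tau\circ a^{-2}\circ[2]$. Third, translate ``$\Dscr$ is $3$-Calabi-Yau'' into ``$S_\Dscr\cong[3]$'', i.e. ``$\tau\circ a^{-2}\cong\id$'', and then apply Lemma \ref{ref-3.2.2-12}, noting that $\tau\cong a^2$ is the same as $(\pi^\ast\circ\pi^{-1})\circ a^2\circ a^{-2}\cong\id$ exactly when $\pi^\ast\circ\pi^{-1}\cong\id$, i.e. $\pi$ is (anti) self-adjoint. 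One subtlety worth spelling out: the equivalence $\pi:D(E^\circ)\to D(C^\circ)$ is what makes $a$ an endofunctor of $D(C^\circ)$ rather than just an equivalence $D(C^\circ)\to D(E^\circ)$, and one must check that forming the orbit category by $a[-1]$ is the intended construction — this is a matter of unwinding the definitions in \S\ref{ref-3.2-8} together with the indecomposables computation \eqref{ref-3.3-14}.

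The main obstacle, and the place where care is genuinely needed, is the claim that the Serre functor of the triangulated hull of $\Hscr/F$ is $S_\Hscr\circ F^{-1}$. This is not completely formal: one needs $\Hscr$ to have a Serre functor, $F$ to commute with it, and the triangulated hull to be well-behaved (e.g. Hom-finite and Krull–Schmidt), so that Serre duality descends. For $\Hscr=D^b(\mod(C^\circ))$ with $C$ the path algebra of a (connected, non-Dynkin, since $\dim W\geq 2$) generalized Kronecker quiver this holds, and the orbit functor $a[-1]$ acts freely enough on indecomposables — as one sees from \eqref{ref-3.3-14} — that the hull is again Hom-finite Krull–Schmidt. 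I would therefore structure the proof as: (i) recall/cite that $\Hscr$ has Serre functor $\tau[1]$ and that $a$ commutes with $\tau$; (ii) cite the Serre-functor-of-orbit-category statement from \cite{Keller6} (or reprove the short argument that a Serre functor on $\Hscr$ commuting with $F$ induces one on $\Hscr/F$); (iii) conclude $S_\Dscr\cong\tau a^{-2}[2]$; (iv) set this equal to $[3]$ and finish via Lemma \ref{ref-3.2.2-12}. The only real computation, the identification of $S_\Dscr$, then reduces to the already-proved Lemma \ref{ref-3.2.2-12}, so no new lengthy calculation is required.
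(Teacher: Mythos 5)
Your strategy is essentially the paper's: the paper also observes that the Serre functor $S=\tau[1]$ of $\Hscr$, being canonical, commutes with $a[-1]$ and hence descends to the Serre functor of $\Dscr$, and then computes in $\Dscr$ that $S=\tau[1]=(\pi^\ast\circ\pi^{-1})\circ a^2[1]=(\pi^\ast\circ\pi^{-1})[3]$, using $a\cong[1]$ in the orbit category and Lemma \ref{ref-3.2.2-12}. Two remarks on your bookkeeping before the main point. Since $F=a[-1]\cong\id$ in $\Dscr$, the formula $S_\Dscr=S_\Hscr\circ F^{-1}$ carries no more information than ``$S_\Hscr$ descends''; moreover $(a[-1])^{-1}=a^{-1}[1]$, so your displayed expression should read $\tau\circ a^{-1}\circ[2]$, not $\tau\circ a^{-2}\circ[2]$, and setting it isomorphic to $[3]$ gives $\tau a^{-1}\cong[1]$ rather than $\tau a^{-2}\cong\id$. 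These two slips compensate each other (and are invisible in $\Dscr$, where $a\cong[1]$), so your net condition ``$\Dscr$ is $3$-Calabi-Yau iff $\tau\cong a^2$'' is correct, \emph{provided} it is read as an isomorphism of the induced functors on $\Dscr$.

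That proviso is where the genuine gap lies. Lemma \ref{ref-3.2.2-12} is an equivalence at the level of endofunctors of $D(C^\circ)$: there, $\tau\cong a^2$ iff $\pi^\ast\circ\pi^{-1}\cong\id$ iff $\pi^\ast\circ\pi^{-1}=\pm1$. The ``if'' direction of the lemma you are proving transfers fine, since an isomorphism of functors on $\Hscr$ induces one on $\Dscr$. But in the ``only if'' direction the $3$-Calabi-Yau hypothesis only yields $\tau\cong a^2$ (equivalently, $\pi^\ast\circ\pi^{-1}\cong\id$) as endofunctors of the orbit category, and this does not formally lift to $\Hscr$: morphism spaces in $\Dscr$ are $\bigoplus_n\Hom_\Hscr\bigl(M,(a[-1])^nN\bigr)$, so a natural isomorphism downstairs need not come from one upstairs, and you cannot simply quote Lemma \ref{ref-3.2.2-12}. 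One still has to argue that if $\pi^\ast\circ\pi^{-1}$ induces a functor on $\Dscr$ isomorphic to the identity then $\pi^\ast\circ\pi^{-1}=\pm1$ in $\End_k(W)$ (for instance by evaluating a natural isomorphism on $P_1$, where $\End_\Dscr(P_1)=k$, and tracking the action on $\Hom_\Dscr(P_1,P_1[-1])=W$). This is exactly the step the paper isolates in its last sentence (``it is easy to see that this is the case if and only if $\pi^\ast\circ\pi^{-1}=\pm1$''); your proposal omits it, so as written the ``only if'' implication is not justified.
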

\begin{proof} Let $S$ be the Serre-functor for $\Hscr$. Being canonical
$S$ commutes with the auto-equivalence $a[-1]$. Hence $S$  induces
an autoequivalence on $\Dscr$ which is easily seen to be the Serre
functor of $\Dscr$.

In $\Dscr$ we have
 $S=\tau[1]=(\pi^\ast\circ \pi^{-1})\circ
 a^2[1]=(\pi^\ast\circ \pi^{-1})[3]$. Thus $\Dscr$ is
 $3$-Calabi-Yau if and only if $\pi^\ast\circ \pi^{-1}$ is
 isomorphic to the identity functor. It is easy to see that this is the
case if and only is $\pi^\ast\circ \pi^{-1}=\pm 1$ in $\End_k(W)$.
\end{proof}
\begin{lemmas}
\label{ref-3.3.2-16}
The object $P_1$ in $\Dscr$ satisfies
\[
\Ext^i_\Dscr(P_1,P_1)=0 \mbox{ for } i=1,2, \quad \Hom_\Dscr(P_1,P_1)=k
\mbox{ and } \Ext^{-1}_{\Dscr}(P_1,P_1)=W.
\]
\end{lemmas}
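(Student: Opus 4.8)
The plan is to compute all these Hom-spaces directly from the description of morphisms in the orbit category $\Dscr=\Hscr/a[-1]$. Since $P_1$ and its shifts lie in the image of $\Hscr$, we have
\[
\Ext^i_\Dscr(P_1,P_1)=\Hom_\Dscr(P_1,P_1[i])=\bigoplus_{n\in\ZZ}\Hom_\Hscr\bigl(P_1,\,a^nP_1[i-n]\bigr)
\]
(see \cite{Keller6}); this sum will turn out to have at most one nonzero term in each degree that concerns us, so no convergence subtlety arises, and everything reduces to understanding these graded pieces.

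The first point is that $P_1$ is a \emph{projective} object of $\mod(C^\circ)$, so that $\Hom_\Hscr(P_1,M[j])=\Ext^j_{C^\circ}(P_1,M)=0$ for every module $M$ and every $j\neq 0$. The second is that, $a$ being an autoequivalence and $P_1$ indecomposable, each $a^nP_1$ is of the form $M^{(n)}[j_n]$ with $M^{(n)}$ an indecomposable object of $\mod(C^\circ)$. The relations $aP_2=P_1$, $aP_1=I_2[-1]$, $aI_2=I_1$ recorded after Lemma \ref{ref-3.2.2-12} give $M^{(-1)}=P_2$, $M^{(0)}=P_1$, $M^{(1)}=I_2$, $M^{(2)}=I_1$, with $j_{-1}=j_0=0$ and $j_1=j_2=-1$. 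Using the identity $a^2=(\pi^\ast\circ\pi^{-1})^{-1}\circ\tau$ of Lemma \ref{ref-3.2.2-12} — in which $\pi^\ast\circ\pi^{-1}$ is a reindexing of the arrows and so preserves $\mod(C^\circ)$ — together with the behaviour of $\tau$ on the preprojective and preinjective components of $\mod(C^\circ)$, one checks by induction that $j_n=0$ for all $n\le 0$ and $j_n=-1$ for all $n\ge 1$. Consequently $n\mapsto n-j_n$ is strictly increasing, equal to $n$ for $n\le 0$ and to $n+1$ for $n\ge 1$; in particular it takes the values $\dots,-2,-1,0,2,3,\dots$ and never equals $1$.

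Since $a^nP_1[i-n]=M^{(n)}[\,j_n+i-n\,]$, the $n$-th summand vanishes unless $i=n-j_n$, in which case it is $\Hom_{C^\circ}(P_1,M^{(n)})$. For $i=0$ this forces $n=0$ and gives $\Hom_\Dscr(P_1,P_1)=\End_{C^\circ}(P_1)=k$. For $i=1$ no index is admissible, so $\Ext^1_\Dscr(P_1,P_1)=0$. For $i=2$ the only admissible index is $n=1$, so $\Ext^2_\Dscr(P_1,P_1)=\Hom_{C^\circ}(P_1,I_2)=0$, the last vanishing because the quiver \eqref{ref-3.1-9} of $C$ has vertex $2$ as a source, whence $I_2\cong S_2$ while $P_1\cong S_1$. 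Finally for $i=-1$ the only admissible index is $n=-1$, so $\Ext^{-1}_\Dscr(P_1,P_1)=\Hom_{C^\circ}(P_1,P_2)$; since $\Hom_{C^\circ}(P_1,-)$ is evaluation at the vertex $1$ and the representation $P_2$ has the space $W$ sitting at vertex $1$ (spanned by the arrows of the quiver), this equals $W$, as asserted.

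The step I expect to require the most care is pinning down the shifts $j_n$ for \emph{all} $n$ — equivalently, checking that $a$ sends preprojective (resp.\ preinjective) modules to modules of the same kind, the single cohomological shift $[-1]$ entering only at the passage $P_1\mapsto aP_1=I_2[-1]$ from one component to the other. Once this bookkeeping is in place, the remaining ingredients — the orbit-category Hom-formula, the projectivity of $P_1$, and the three elementary computations in $\mod(C^\circ)$ — are routine.
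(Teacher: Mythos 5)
Your proof is correct and follows essentially the same route as the paper's: the paper's argument rests on the identity $\Hom_\Dscr(P_1,N)=\Hom_\Hscr(P_1,N)$ for $N\in\Ind(\Hscr)\cup\{I_2[-1]\}$ (justified there only by ``one computes'') together with $[1]\cong a$ in $\Dscr$ and the recorded values $aP_2=P_1$, $aP_1=I_2[-1]$, $aI_2=I_1$, and your expansion of the orbit-category Hom-spaces with the bookkeeping of the shifts $j_n$ is exactly the verification hidden in that step. Note only that your claim $j_n=0$ for $n\le 0$ and $j_n=-1$ for $n\ge 1$ uses $\dim W\ge 2$ (so that the preprojective and preinjective components are infinite and $\tau^{\pm1}$ never produces an extra shift there); this is the same implicit hypothesis under which the paper's \eqref{ref-3.3-14} and \eqref{ref-3.4-17} hold, so you are working in the paper's intended generality.
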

\begin{proof}
For $N\in \Ind(\Hscr)\cup \{I_2[-1]\}$ one  computes
\begin{equation}
\label{ref-3.4-17}
\Hom_\Dscr(P_1,N)=\Hom_\Hscr(P_1,N)
\end{equation}
Thus we find
\begin{align*}
\Hom_\Dscr(P_1,P_1[-1])&=\Hom_\Dscr(P_1,a^{-1}P_1)\\
&=\Hom_\Dscr(P_1,P_2)\\
&=W
\end{align*}
\[
\Hom_\Dscr(P_1,P_1)=k
\]
\begin{align*}
\Hom_\Dscr(P_1,P_1[1])&=\Hom_\Dscr(P_1,a P_1)\\
&=\Hom_\Dscr(P_1,I_2[-1])\\
&=0
\end{align*}
and
\begin{align*}
\Hom_\Dscr(P_1,P_1[2])&=\Hom_\Dscr(P_1,aP_1[1])\\
&=\Hom_\Dscr(P_1,I_2)\\
&=0\qed
\end{align*}
\def\qed{}\end{proof}
The following lemma is not used explicitly.
\begin{lemmas} The object $P_1$ in $\Dscr$ has the properties of a
 $3$-cluster tilting object, i.e.\ if $\Ext^i_\Dscr(P_1,N)=0$ for $i=1,2$ then $N$ is a
 sum of copies of $P_1$.
\end{lemmas}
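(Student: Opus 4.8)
The goal is to show that in $\Dscr = \Hscr/a[-1]$ the object $P_1$ is $3$-cluster tilting, i.e.\ that $\Ext^1_\Dscr(P_1,N) = \Ext^2_\Dscr(P_1,N) = 0$ forces $N$ to be a sum of copies of $P_1$. Since $\Hscr$ is hereditary, every object of $\Dscr$ is a sum of shifts of indecomposables, and by \eqref{ref-3.3-14} we know $\Ind(\Dscr) = \Ind(\Hscr)\cup\{I_2[-1]\}$ — and moreover $\Hscr/a[-1]$ is an orbit category, so up to the action of $a[-1]$ we may assume $N$ is one of these indecomposables, placed in a normalized ``degree''. The plan is therefore to run through the (finitely many, up to the orbit action) indecomposable objects $N$, compute $\Hom_\Dscr(P_1,N[i])$ for $i=1,2$ using the reduction \eqref{ref-3.4-17}, namely $\Hom_\Dscr(P_1,M) = \Hom_\Hscr(P_1,M)$ for $M\in\Ind(\Hscr)\cup\{I_2[-1]\}$, together with the formulas $aP_2 = P_1$, $aP_1 = I_2[-1]$, $aI_2 = I_1$ recorded above, and check that the only $N$ surviving both vanishing conditions is $P_1$ itself.

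Concretely, I would organize the indecomposables of $\Hscr$ for the $n$-Kronecker quiver ($n = \dim W \ge 2$) into the preprojective component $\{P_1, P_2, aP_2 = P_1, \dots\}$ (the $a$-orbit of $P_1, P_2$), the preinjective component $\{\dots, I_1, I_2\}$, and the regular part, together with all shifts $[j]$; then fold by the auto-equivalence $a[-1]$. Because $P_1$ is a projective in the hereditary category, $\Hom_\Hscr(P_1, M[i]) = \Ext^i_\Hscr(P_1, M) = 0$ for all $i\ge 1$ and all $M\in\Hscr^{\heartsuit}$; so the only way to get a nonzero $\Ext^1$ or $\Ext^2$ in $\Dscr$ is through the identification $P_1[i] \cong a^{-i}P_1$ in the orbit category, which moves the relevant $\Hom$ back into degree $0$. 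Thus the computation reduces to: for which indecomposable $N$ (in normalized degree) is $\Hom_\Hscr(a^{-1}P_1, N) = 0$ \emph{and} $\Hom_\Hscr(a^{-2}P_1, N) = 0$? One has $a^{-1}P_1 = P_2$, so the first condition says $\Hom_\Hscr(P_2, N) = 0$, which on the preprojective/preinjective line kills everything to the ``left'' of $P_2$; combined with the second condition $\Hom_\Hscr(P_1, N) = 0$ (after one more application of $a^{-1}$, landing on $P_1$ again, since $a^{-2}P_1 = a^{-1}P_2 = P_1$), the candidate set collapses to $N = P_1$.

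The main obstacle I anticipate is bookkeeping rather than conceptual: one must be careful about (a) the normalization of degrees in the orbit category — each indecomposable of $\Dscr$ has infinitely many lifts $N[j]$ in $\Hscr$ and one has to pick representatives so that the $\Ext^1,\Ext^2$ computations are exhaustive and non-redundant — and (b) correctly tracking how $a$ permutes the preprojective, regular, and preinjective pieces, in particular that $a$ sends the last projective to a shifted injective ($aP_1 = I_2[-1]$) so that the preprojective and preinjective components get glued in $\Dscr$. Once the orbit structure is pinned down, each individual $\Hom$-space is computed from the standard formulas for the Kronecker quiver (projectives have no higher self-extensions; $\Hom_\Hscr(P_i, -)$ evaluates the representation at vertex $i$), so no delicate estimate is needed. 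I would present the argument as a short case analysis keyed to \eqref{ref-3.3-14}, invoking Lemma~\ref{ref-3.3.2-16} as the prototype calculation and noting that the same method, applied to a general indecomposable $N$ in place of $P_1$ on the right, yields the stated rigidity.
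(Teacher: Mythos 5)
Your overall strategy (reduce via \eqref{ref-3.3-14} to indecomposable representatives, rewrite the two Ext-conditions through the orbit relation and the recorded action of $a$, then run a case check) is the same as the paper's, but the execution contains an error that breaks the concluding step. From $aP_2=P_1$ you correctly get $a^{-1}P_1=P_2$, but $a^{-2}P_1=a^{-1}P_2$ is \emph{not} $P_1$: since $a^2\cong\tau$ (Lemma \ref{ref-3.2.2-12}), one has $a^{-2}P_1\cong\tau^{-1}P_1$, a non-projective preprojective module, and $\tau$ has infinite order on the preprojective component, so $a^{-2}P_1\not\cong P_1$. With your identification the two conditions become $\Hom_\Dscr(P_2,N)=\Hom_\Dscr(P_1,N)=0$, which for an indecomposable module $N$ would force $N=0$ (these Homs contain the two vertex spaces of $N$) and in particular is violated by $N=P_1$ itself; so the claimed ``collapse to $N=P_1$'' contradicts your own conditions, which is the symptom of the miscomputation. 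A corrected version of your route would have to work with $\Hom_\Dscr(P_2,N)=0$ and $\Hom_\Dscr(\tau^{-1}P_1,N)=0$, and would also need an argument that these orbit-category Homs reduce to $\Hscr$-Homs, since \eqref{ref-3.4-17} is only stated with $P_1$ in the first argument.

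The paper avoids this by shifting $N$ instead of shifting $P_1$ twice: using $N[1]\cong aN$ in $\Dscr$, the hypotheses become $\Hom_\Dscr(P_1,aN)=\Hom_\Dscr(P_2,aN)=0$, so $aN$ receives no maps from the progenerator $P_1\oplus P_2$ and hence cannot be (the class of) a nonzero module; since $a$ carries every indecomposable module except $P_1$ to a module, and $a(I_2[-1])=I_1[-1]$, this pins $N$ down to $\{P_1,I_2[-1]\}$. Finally the case $N=I_2[-1]$ must be excluded by an explicit non-vanishing computation, $\Ext^2_\Dscr(P_1,I_2[-1])=\Hom_\Dscr(P_1,aI_2)=\Hom_\Dscr(P_1,I_1)\neq 0$; your sketch omits this exclusion, and with your erroneous conditions it could not be carried out. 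So both the key identification of $a^{-2}P_1$ and the treatment of the exceptional object $I_2[-1]$ need to be repaired before the case analysis closes.
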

\begin{proof}
Assume that $N\in \Ind(\Hscr)\cup \{I_2[-1]\}$ is such that
$\Hom_\Dscr(P_1,N[1])=\Hom_\Dscr(P_1,N[2])=0$. We have to prove $N=P_1$.

We may rewrite
\begin{align*}
\Hom_\Dscr(P_1,N[2])&=\Hom_\Dscr(P_1[-1],N[1])\\
&=\Hom_\Dscr(a^{-1}P_1,N[1])\\
&=\Hom_\Dscr(P_2,N[1])
\end{align*}
Thus we find $\Hom_\Dscr(P_1,aN)=\Hom_\Dscr(P_2,aN)=0$.
Hence $aN\not\in \Ind(\Hscr)$. We deduce $N\in \{P_1,I_2[-1]\}$.

But if $N=I_2[-1]$ then
\begin{align*}
\Hom_\Dscr(P_1,N[2])&=\Hom_\Dscr(P_1,I_2[1])\\
&=\Hom_\Dscr(P_1,aI_2)\\
&=\Hom_\Dscr(P_1,I_1)\\
&\neq 0
\end{align*}
So we are left with the possibility $N=P_1$ which finishes the proof.
\end{proof}
\subsection{Proof of  Theorem \ref{ref-1.3-2}}
\label{ref-3.4-18}
Let $\Tscr$ be an algebraic $\Ext$-finite Krull-Schmidt
$3$-Calabi-Yau category containing a $3$-cluster tilting object $T$ such
that $\End_\Tscr(T)=k$.
\begin{lemmas}
\label{ref-3.4.1-19}
 Let $N\in \Tscr$. Then there exists a distinguished triangle in $\Tscr$
\begin{equation}
\label{ref-3.5-20}
T^a\r T^b\oplus T[-1]^c\r N[1]\r
\end{equation}
\end{lemmas}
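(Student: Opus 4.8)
The plan is to build the triangle \eqref{ref-3.5-20} by a standard ``approximation'' argument adapted to the $3$-cluster tilting situation, i.e.\ by resolving $N[1]$ one step to the left using the additive subcategory $\add(T)$ together with $\add(T[-1])$. First I would write down a triangle
\[
T_0 \xlto{f} N[1] \lto X \lto T_0[1]
\]
where $T_0\in\add(T)$ and $f$ is a right $\add(T)$-approximation of $N[1]$; such approximations exist because $\Tscr$ is $\Ext$-finite and Krull--Schmidt and $\End_\Tscr(T)=k$, so $\Hom_\Tscr(T,N[1])$ is a finite-dimensional vector space and we may take $T_0=T^{\dim\Hom_\Tscr(T,N[1])}$ with $f$ the canonical evaluation map. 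By construction $\Hom_\Tscr(T,X[-1])=0$, equivalently $\Ext^{-1}_\Tscr(T,X)=0$.

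Next I would analyze $X$. Rotating the triangle gives $X[-1]\lto T_0\lto N[1]\lto X$, and I want to show that $X$ is (up to the pieces $T^b$) built from $T[-1]$. The key input is the $3$-cluster tilting property of $T$: one knows $\Ext^i_\Tscr(T,M)=0$ for $i=1,2$ forces $M\in\add(T)$, and by $3$-Calabi-Yau duality $\Ext^i_\Tscr(T,M)\cong D\Ext^{3-i}_\Tscr(M,T)$, so the cohomology of any object relative to $T$ is concentrated in a narrow band. Applying $\Hom_\Tscr(T,-)$ to the triangle and using $\Ext^{-1}_\Tscr(T,X)=0$ together with the vanishing $\Ext^1_\Tscr(T,T)=\Ext^2_\Tscr(T,T)=0$ (from $\End_\Tscr(T)=k$ and $3$-cluster tilting, as in Lemma \ref{ref-3.3.2-16}'s analogue) pins down $\Ext^i_\Tscr(T,X)$ to vanish outside $i\in\{0,1\}$. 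The $i=1$ part can then be split off by a second approximation step, this time producing a copy of $T[-1]$: choosing a left $\add(T[-1])$-approximation (equivalently, killing $\Ext^1_\Tscr(T,X)=\Hom_\Tscr(T,X[1])$ by mapping in a suitable $T[-1]^c$), one obtains a triangle $T[-1]^c\lto X\lto T^b\lto T[-1]^c[1]$. Splicing the two triangles via the octahedral axiom yields $T^a\lto T^b\oplus T[-1]^c\lto N[1]\lto$ with the middle term exactly as claimed, where $a=\dim\Hom_\Tscr(T,N[1])$.

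The main obstacle I anticipate is the bookkeeping that forces $X$ to have $\Ext^\bullet_\Tscr(T,X)$ concentrated in exactly degrees $0$ and $1$ and, more delicately, that the degree-$0$ part contributes only a summand in $\add(T)$ rather than something more complicated; this is where the hypothesis $\End_\Tscr(T)=k$ (so that $\add(T)$ consists just of finite sums of a single brick) and the full strength of $3$-Calabi-Yau duality are needed, and one must be careful that the second approximation step does not reintroduce higher $\Ext$'s. A secondary point requiring care is that $\Tscr$ is only assumed algebraic and Krull--Schmidt, not a priori Hom-finite in every degree; but $\Ext$-finiteness is hypothesized, so all the relevant $\Hom$-spaces are finite-dimensional and the approximations genuinely exist. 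Once $X$ is understood, the octahedral splice is routine.
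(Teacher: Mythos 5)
Your overall strategy (approximate $N[1]$ by objects of $\add(T)$ and $\add(T[-1])$ and invoke the $3$-cluster tilting criterion) is in the same spirit as the paper, which however does it in a single step: it defines $Y$ by a triangle $Y\to T^{\Ext^1_\Tscr(T,N)}\oplus T[-1]^{\Ext^2_\Tscr(T,N)}\to N[1]\to$ using the universal map, checks $\Ext^1_\Tscr(T,Y)=\Ext^2_\Tscr(T,Y)=0$, and concludes $Y=T^a$ from the cluster tilting property. Your two-step execution contains genuine errors. First, it is not true ``by construction'' that $\Ext^{-1}_\Tscr(T,X)=0$: from the triangle $T_0\to N[1]\to X\to T_0[1]$, using that $\End_\Tscr(T)=k$ makes $\Hom_\Tscr(T,T_0)\to\Hom_\Tscr(T,N[1])$ an isomorphism and that $\Ext^1_\Tscr(T,T)=0$, what follows is $\Hom_\Tscr(T,X)=0$, whereas $\Hom_\Tscr(T,X[-1])$ is a quotient of $\Hom_\Tscr(T,N)$ and need not vanish. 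Second, $\Ext^i_\Tscr(T,X)$ is not concentrated in degrees $\{0,1\}$: by the $3$-Calabi-Yau property $\Ext^3_\Tscr(T,T)\cong\Hom_\Tscr(T,T)^\ast=k\neq 0$ (and $\Ext^{-1}_\Tscr(T,T)\neq 0$), so Hom against $T$ never lives in a bounded band; the cluster tilting hypothesis only provides the criterion $\Ext^{1}_\Tscr(T,-)=\Ext^{2}_\Tscr(T,-)=0\Rightarrow{}\in\add(T)$, not a concentration statement.

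Third, and most seriously, your claimed second triangle $T[-1]^c\to X\to T^b\to$ has the wrong shape, and the argument offered for it fails: if you kill $\Ext^1_\Tscr(T,X)$ by a right $\add(T[-1])$-approximation $T[-1]^c\to X$ and let $X'$ be its cone, one gets $\Ext^1_\Tscr(T,X')=0$, but $\Ext^2_\Tscr(T,X')$ is a quotient of $\Ext^2_\Tscr(T,X)$, which does not vanish in general (comparing with the statement to be proved one even finds $\Ext^2_\Tscr(T,X)\cong k^a$), so you cannot conclude $X'\in\add(T)$. Indeed, applying the octahedral axiom to $T^b\hookrightarrow T^b\oplus T[-1]^c\to N[1]$ shows that the cone $X$ of your $\add(T)$-approximation sits in a triangle $T[-1]^c\to X\to T^a[1]\to$; the third term is the shifted copy of the very object $T^a$ you are trying to produce, not a new $T^b$. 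Finally, even with correct intermediate triangles, an octahedral splice produces a middle term which is only an extension; to obtain the direct sum $T^b\oplus T[-1]^c$ one needs an additional splitting argument (for instance, that the composite $T[-2]^c\to Z\to T^b$, with $Z$ the cocone of the first approximation, vanishes because $\Ext^2_\Tscr(T,T)=0$), or one should simply take the approximation by $T^b\oplus T[-1]^c$ in one step, as the paper does.
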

\begin{proof}
Let $Y$ be defined (up to isomorphism) by the following distinguished triangle\footnote{It would be more logical to write e.g. $\Ext^1_\Tscr(T,N)\otimes_k T$
for $T^{\Ext^1_\Tscr(T,N)}$ but this would take a lot more space.}
\[
Y\r T^{\Ext^1_\Tscr(T,N)}\oplus T[-1]^{\Ext^2_\Tscr(T,N)} \r N[1] \r
\]
A quick check reveals that $\Ext^1_\Tscr(T,Y)=\Ext^2_\Tscr(T,Y)=0$. Hence
$Y=T^a$ for some~$a$.
\end{proof}
We need to consider the special case $N=T[1]$. Then the distinguished triangle \eqref{ref-3.5-20}
(constructed as in the proof) has the form
\begin{equation}
\label{ref-3.6-21}
T^{\Ext^{-1}_\Tscr(T,T)}\xrightarrow{\phi} T[-1]\xrightarrow{\alpha} T[2]
\xrightarrow{\beta}
\end{equation}
where $\phi$ is the universal
map (this follows from applying $\Hom_\Tscr(T,-)$).
Since $\End_\Cscr(T[2])=k$ it follows that $\alpha$,$\beta$ are determined
up to (the same) scalar.

This has a surprizing consequence. Applying $\Hom_\Tscr(-,T)$ to the triangle
\eqref{ref-3.6-21} we find that $\Hom_\Tscr(\beta[-1],T)^{-1}$ defines an
isomorphism
\[
\pi:\Ext^{-1}_\Tscr(T,T)\r \Ext^{-1}_\Tscr(T,T)^\ast
\]
Thus $W\overset{\text{def}}{=}\Ext^{-1}_\Cscr(T,T)$ comes equipped
with an isomorphism $\pi:W\r W^\ast$ which is canonical up to a
scalar.  In other words we are in the setting of \S\ref{ref-3.2-8}
and we now use the notations introduced in sections \ref{ref-3.2-8}
and \ref{ref-3.3-13}.

As $a$ is obtained from the reflection in vertex $1$, one verifies
(see \S\ref{ref-3.2-8}) that $a$ is associated to the element of
$\Tilt(C,C)$ given by $(\theta,I_2[-1]\oplus P_1)$ where $\theta:C\r
\End_C(I_2[-1]\oplus P_1)$ is the composition
\begin{equation}
\label{ref-3.7-22}
C=\begin{pmatrix}
k&0\\
W&k
\end{pmatrix}
\xrightarrow{\pi}
\begin{pmatrix}
k&0\\
W^\ast&k
\end{pmatrix}
=\End_C(I_2[-1]\oplus P_1)
\end{equation}

Since the autoequivalence $a$ is a derived functor that commutes with
coproducts it is isomorphic to a derived tensor functor  $-\Lotimes_{C} X$
for some $X\in D(C^e)$, by \cite[6.4]{Keller1}.  As a
right $C$-module we have $X\cong I_2[-1]\oplus P_1$.

Now we use the assumption that $\Hscr$ is algebraic and we proceed
more or less as in the appendix to \cite{KeRe}.  By
\cite[Thm. 4.3]{Keller1} we may assume that $\Tscr$ is a strict (= closed under
isomorphism) triangulated subcategory of a derived category $D(\Ascr)$
for some DG-category $\Ascr$. We denote by ${}_C\Tscr$ the full
subcategory of $D(C\otimes\Ascr)$ whose objects are differential
graded $C\otimes\Ascr$-modules which are in $\Tscr$ when considered as
$\Ascr$-modules. Clearly ${}_C \Tscr$ is triangulated. By
\cite[Lemma A.2.1(a)]{KeRe} $T$ may be lifted to an object
in ${}_C\Tscr$, which we also denote by $T$. Put $S=T\oplus T[-1]$.
\begin{lemmas}
\label{ref-3.4.2-23}
One has an isomorphism in ${}_C\Tscr$
\[
X\Lotimes_B S\cong S[1]
\]
\end{lemmas}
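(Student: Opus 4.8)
The plan is to reduce the claimed isomorphism $X\Lotimes_B S\cong S[1]$ in ${}_C\Tscr$ to the identity \eqref{ref-3.6-21} together with the module-theoretic description of $X$ already recorded above. Here $B=C$ (in the notation of Lemma \ref{ref-3.4.2-23} one writes $B$ for the path algebra $C$), $S=T\oplus T[-1]$, and $X\in D(C^e)$ is the bimodule inducing the autoequivalence $a=\pi\circ Rr_1$ with underlying right $C$-module $I_2[-1]\oplus P_1$. First I would unravel what $X\Lotimes_C S$ means in ${}_C\Tscr$: since $S$ lifts $T\oplus T[-1]\in\Tscr$ to an object of $D(C\otimes\Ascr)$ with its left $C$-action, tensoring over $C$ with the $C$-$C$-bimodule $X$ produces a new object of ${}_C\Tscr$ whose image in $\Tscr$ (forgetting the left $C$-structure) is computed by applying the autoequivalence $a$ componentwise, i.e.\ it is $aT\oplus aT[-1]$.

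Next I would use the three displayed formulas $aP_2=P_1$, $aP_1=I_2[-1]$, $aI_2=I_1$ — or rather the analogous statements for $T$. The key observation is that the right $C$-module structure of $X$ matches the pattern $I_2[-1]\oplus P_1$, which under the Beilinson-type identification corresponds exactly to the two-term ``resolution'' appearing in \eqref{ref-3.6-21}. Concretely, the distinguished triangle
\[
T^{W}\xrightarrow{\phi} T[-1]\xrightarrow{\alpha} T[2]\xrightarrow{\beta}
\]
exhibits $T[2]$ as the cone of the universal map $\phi:T^{\Ext^{-1}_\Tscr(T,T)}\to T[-1]$. Tensoring the bimodule $X$ with $S$ should be arranged, using the explicit idempotent decomposition of $C=\left(\begin{smallmatrix}k&0\\W&k\end{smallmatrix}\right)$ and the map $\theta$ of \eqref{ref-3.7-22}, so that $X\Lotimes_C T\cong aT$ sits in a triangle $T^{W}\to T[-1]\to X\Lotimes_C T\to$ compatibly with \eqref{ref-3.6-21}; this identifies $X\Lotimes_C T$ with $T[2]$ as an object of ${}_C\Tscr$, and similarly $X\Lotimes_C T[-1]\cong T[1]$. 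Adding the two summands gives $X\Lotimes_C S\cong T[2]\oplus T[1]=(T\oplus T[-1])[2]$... wait — I would double-check the shift bookkeeping: one wants $S[1]=T[1]\oplus T$, so the correct matching is $X\Lotimes_C T\cong aT$ with $aT$ playing the role of the ``shifted'' piece and $X\Lotimes_C T[-1]\cong aT[-1]$, and one must verify from $aP_1=I_2[-1]$ and $aP_2=P_1$ (read off in $\Dscr$, where $aP_1=I_2[-1]$ becomes $P_1[1]$ after passing to the orbit category) that $aT\cong T[1]$ and $aT[-1]\cong T$ hold \emph{in $\Tscr$}, hence the direct sum is $T[1]\oplus T=S[1]$.

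The main obstacle I anticipate is not the underlying object-level identification in $\Tscr$ — that is essentially dictated by \eqref{ref-3.6-21} and the $a$-formulas — but rather promoting it to an isomorphism in ${}_C\Tscr$, i.e.\ checking that the isomorphism can be chosen to respect the left $C$-module (equivalently, DG $C\otimes\Ascr$-module) structures. This is exactly the point where one needs the algebraicity hypothesis and the machinery of \cite[Thm.\ 4.3, 6.4]{Keller1}: one has to lift the scalar multiple ambiguity in $\alpha,\beta$ to a genuine bimodule quasi-isomorphism, using that $\End_\Tscr(T[2])=k$ pins down the triangle up to a scalar and that $\theta$ in \eqref{ref-3.7-22} encodes the requisite left action via $\pi$. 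I would handle this by writing the cone defining $X\Lotimes_C S$ as an explicit twisted complex over $\Ascr$ built from $T$ and the components of $\phi$, observing that this twisted complex carries a natural left $C$-action through $\theta$, and then matching it with the twisted complex computing $S[1]$; the coincidence of the connecting maps up to the single scalar of \eqref{ref-3.6-21} is what makes the two DG-modules quasi-isomorphic over $C\otimes\Ascr$. This mirrors the argument in the appendix to \cite{KeRe}, so I expect it to go through once the combinatorics of the idempotents in $C$ are tracked carefully.
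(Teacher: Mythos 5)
Your overall architecture matches the paper's: identify the underlying object of $X\Lotimes_C S$ in $\Tscr$ using the triangle \eqref{ref-3.6-21} and the right $C$-module description $X\cong I_2[-1]\oplus P_1$, then upgrade the isomorphism to ${}_C\Tscr$ by checking compatibility with the left action \eqref{ref-3.7-22} and invoking the lifting machinery from the appendix of \cite{KeRe}. However, your object-level computation has a genuine gap. You decompose along the summands of $S$, writing $X\Lotimes_C S\cong (X\Lotimes_C T)\oplus(X\Lotimes_C T[-1])$ and describing the result as ``applying $a$ componentwise''. This is not meaningful: $a$ is an autoequivalence of $D(C^\circ)$ and does not act on $\Tscr$, and the summands $T$, $T[-1]$ of $S$ are not left $C$-submodules (the $W$-part of the $C$-action maps the $T$-summand into the $T[-1]$-summand), so ``$X\Lotimes_C T$'' is not defined. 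This is exactly what produces your shift discrepancy ($S[2]$ versus $S[1]$), which you then repair by asserting that $aT\cong T[1]$ and $aT[-1]\cong T$ ``hold in $\Tscr$'' --- a circular move, since transporting the action of $a$ on $\Dscr$ into shifts on $\Tscr$ is precisely what this lemma (and the functor $Q$ built from it) is meant to establish. The correct decomposition is along $X$: $X\Lotimes_C S\cong (I_2\Lotimes_C S)[-1]\oplus (P_1\Lotimes_C S)$, with $P_1\Lotimes_C S\cong T$, $P_2\Lotimes_C S\cong T[-1]$, and $I_2\Lotimes_C S\cong T[2]$ obtained by tensoring the resolution $0\to P_1^{W}\to P_2\to I_2\to 0$ with $S$ and comparing the resulting triangle with \eqref{ref-3.6-21}; this yields $T[1]\oplus T=S[1]$ with no bookkeeping ambiguity.

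On the second half, you correctly locate the real difficulty (promoting the isomorphism from $\Tscr$ to ${}_C\Tscr$) and the right tools (algebraicity, \cite{KeRe}), but your sketch stops short of the one computation that makes it work. The paper's route is: check that the isomorphism in $\Tscr$ is $C$-equivariant, which for each $\mu\in W$ reduces, after rotating triangles, to the commutativity of the square with top arrow $\beta[-1]:T[1]\to T^{W}$, verticals $\mu:T[1]\to T$ and $\pi(\mu):T^{W}\to T$, and this commutes precisely by the definition of $\pi$ out of \eqref{ref-3.6-21}; then \cite[Lemma A.2.2]{KeRe} lifts the $C$-equivariant morphism to ${}_C\Tscr$. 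Your proposed matching of twisted complexes is plausible in outline, but as written it never pins down where the specific $\theta$ of \eqref{ref-3.7-22} (i.e.\ $\pi$) enters, and that identification is the crux of the lemma; without it the argument does not close.
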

\begin{proof}
As objects in $\Tscr$ we have
\begin{align*}
X\Lotimes_C S&=(I_2[-1]\oplus P_1)\Lotimes_C S\\
&=I_2\Lotimes_C S[-1]\oplus P_1\Lotimes_C S
\end{align*}
Clearly  $P_1\Lotimes_C S\cong T$. To compute $I_2\Lotimes_C S$ we use
the resolution
\[
0\r P_1^{\Ext_\Tscr^{-1}(T,T)}\r P_2\r I_2\r 0
\]
Tensoring with $S$ we get a distinguished triangle
\[
T^{\Ext_\Tscr^{-1}(T,T)}\r T[-1]\r I_2\Lotimes_C S\r
\]
Comparing with \eqref{ref-3.6-21} we find $I_2\Lotimes_C S\cong T[2]$.
Thus, we have indeed an isomorphism
\[
\varphi: X\Lotimes_B S\r S[1]
\]
in $\Tscr$.

Now we check that $\varphi$ is $C$-equivariant in $\Tscr$.
The left $C$-module structure on $X\Lotimes_B S$ is obtained from
the (homotopy) $C$-action on $I_2[-1]\oplus P_1$ as given in
\eqref{ref-3.7-22}.

Let $\mu$
be an element of $W=\Hom_C(P_1,P_2)=\Ext^{-1}_\Tscr(T,T)$. We need to
prove that the following diagram is commutative in $\Tscr$.
\[
\begin{CD}
I_2[-1]\Lotimes_B S @>\cong >> T[1]\\
@V \pi(\mu)\Lotimes_B \Id_S VV @VV\mu V\\
P_1 \Lotimes_B S @>>\cong > T
\end{CD}
\]
We write this out in triangles
\[
\begin{CD}
T^{\Ext^{-1}(T,T)} @>\phi>> T[-1] @>\alpha>> T[2]@>\beta>> \\
@V \pi(\mu) VV @VVV @VV\mu V\\
T @>>> 0 @>>> T[1]@>>\id>
\end{CD}
\]
Rotating the triangles we need to prove that the following square is commutative
\[
\begin{CD}
T[1] @>\beta[-1]>> T^{\Ext^{-1}(T,T)}\\
@V\mu VV @VV\pi(\mu)V\\
T @= T
\end{CD}
\]
This commutivity holds precisely because of the definition of $\pi$.
So $\phi$ is indeed $C$-equivariant.

But according to \cite[Lemma A.2.2]{KeRe},
any $C$-equivariant morphism in
$\Tscr$ between objects in ${}_C\Tscr$ may be lifted to a morphism in
${}_C\Tscr$. This finishes the proof.
\end{proof}
We now have a functor
\[
?\Lotimes_C T:\Cscr\r \Tscr
\]
and by Lemma \ref{ref-3.4.2-23} one finds that  $a[-1](?)\Lotimes_C T$ is isomorphic to
$?\Lotimes_C T$. By
the universal property of orbit categories \cite{Keller6} we
obtain a triangulated functor
\[
Q:\Dscr\r \Tscr
\]
which sends $P_1$ to $T$.
\begin{lemmas} $Q$ is an equivalence.
\end{lemmas}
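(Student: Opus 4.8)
The plan is to prove that $Q$ is fully faithful and essentially surjective. For full faithfulness I would run the usual two‑step dévissage: since $Q$ is exact, the full subcategory of objects $Y\in\Dscr$ for which $Q$ induces bijections $\Hom_\Dscr(P_1,Y[i])\to\Hom_\Tscr(T,QY[i])$ for all $i\in\ZZ$ is triangulated, and then so is the full subcategory of objects $X$ for which $Q$ induces bijections $\Hom_\Dscr(X,Y[i])\to\Hom_\Tscr(QX,QY[i])$ for all $i$ and all such $Y$. Hence it is enough to check that $Q$ induces an isomorphism $\Hom_\Dscr(P_1,P_1[i])\to\Hom_\Tscr(T,T[i])$ for every $i$, together with the fact that $P_1$ generates $\Dscr$ as a triangulated category. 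The latter holds because $\Hscr=D^b(\mod(C^\circ))$ is generated by $P_1$ and $P_2$, and since $a[-1]$ becomes isomorphic to the identity in $\Dscr$ one has $P_2\cong a^{-1}P_1\cong P_1[-1]$ there. Dually $T$ generates $\Tscr$: the triangle \eqref{ref-3.5-20} of Lemma \ref{ref-3.4.1-19} exhibits every $N[1]$, hence every $N$, as an iterated cone on copies of $T$ and $T[-1]$.

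I would first verify the isomorphism $\Hom_\Dscr(P_1,P_1[i])\to\Hom_\Tscr(T,T[i])$ in the degrees $i\in\{-1,0,1,2\}$. For $i=0$ both groups are $k$ and $Q(\id_{P_1})=\id_T$; for $i=1,2$ both vanish, by Lemma \ref{ref-3.3.2-16} and because $T$ is $3$-cluster tilting; and for $i=-1$ both are $W$, where the point is that $Q$ realises the \emph{identity} of $W$ on them. This last fact is exactly what the proof of Lemma \ref{ref-3.4.2-23} secures: the $C$‑equivariance of the comparison isomorphism there holds, in the words of that proof, ``precisely because of the definition of $\pi$'', and this is what forces $Q$ to carry the universal map $P_1^W\to P_1[-1]$ of $\Dscr$ to the universal map $T^W\to T[-1]$ of \eqref{ref-3.6-21}. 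To propagate from these four degrees to all of $\ZZ$, I would apply $Q$ to the distinguished triangle
\[
P_1^W\to P_1[-1]\to P_1[2]\to P_1^W[1]
\]
in $\Dscr$, obtained from the short exact sequence $0\to P_1^W\to P_2\to I_2\to0$ used in the proof of Lemma \ref{ref-3.4.2-23} via the identifications $P_2\cong P_1[-1]$ and $I_2\cong P_1[2]$ in $\Dscr$. By the previous step $Q$ sends this triangle to $T^W\to T[-1]\to T[2]\to T^W[1]$, which is the triangle \eqref{ref-3.6-21}. Applying $\Hom_\Dscr(P_1,-)$ and $\Hom_\Tscr(T,-)$ and using naturality of $Q$ yields a commutative ladder between two long exact sequences of the shape
\[
\cdots\to W\otimes G_i\to G_{i-1}\to G_{i+2}\to W\otimes G_{i+1}\to G_i\to\cdots,\qquad G_i:=\Hom_\Dscr(P_1,P_1[i]),
\]
and its $\Tscr$‑analogue. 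Since this sequence determines $G_{i+2}$ from $G_{i-1},G_i,G_{i+1}$, and conversely $G_{i-1}$ from $G_i,G_{i+1},G_{i+2}$, a five‑lemma induction on $|i|$ starting from the window $i\in\{-1,0,1,2\}$ shows $Q$ is an isomorphism on $\Hom(P_1,P_1[i])$ for all $i$, so $Q$ is fully faithful.

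Finally, the essential image of the fully faithful exact functor $Q$ is a strictly full triangulated subcategory of $\Tscr$ containing $T=Q(P_1)$, hence equals $\Tscr$ because $T$ generates; thus $Q$ is an equivalence. The only genuinely delicate step is the degree $-1$ comparison — one needs not merely that $\Ext^{-1}_\Dscr(P_1,P_1)$ and $\Ext^{-1}_\Tscr(T,T)$ are both $W$, but that $Q$ identifies them via the identity of $W$, which is exactly why the self‑dual isomorphism $\pi$ and the $C$‑linearity statement in Lemma \ref{ref-3.4.2-23} were set up as they were; granting that, the extension to all degrees and the density argument are formal.
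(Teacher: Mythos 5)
Your proof is correct, but it organizes the key step (full faithfulness) differently from the paper. You run the standard generator d\'evissage, which forces you to verify that $Q$ is an isomorphism on $\Hom_\Dscr(P_1,P_1[i])$ for \emph{all} $i\in\ZZ$, and you achieve this by checking a window of degrees and propagating with the five lemma along the triangle $P_1^W\to P_1[-1]\to P_1[2]\to$. The paper instead exploits the special two-term triangles \eqref{ref-3.5-20} (and their analogues in $\Dscr$, where $P_2\cong P_1[-1]$ and $I_2\cong P_1[2]$) in \emph{both} variables: first reducing $M$ to $P_1[-1]$, then resolving $N$, so that the only comparisons ever needed are $\Hom(P_1,P_1[i])\to\Hom(T,T[i])$ for $i=0,1,2$ — where both sides are $k,0,0$ by Lemma \ref{ref-3.3.2-16} and the cluster-tilting property, making the isomorphisms automatic. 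What the paper's route buys is that the degree $-1$ comparison, which you single out as the delicate point, never has to be addressed; what your route buys is a statement checked on a single generator in all degrees, closer to the textbook argument. Two small remarks on your version: your claimed identification of the image triangle with \eqref{ref-3.6-21} (via the $C$-equivariance of Lemma \ref{ref-3.4.2-23}) is the right place to look but is stated loosely — what is really used is that the $C$-action on the lift $S=T\oplus T[-1]$ restricts on $W\subset C$ to the canonical identification $W=\Hom_\Tscr(T,T[-1])$; and in fact this delicacy is unnecessary even for your argument, since your own ladder (using only $H_1=H_2=0$ and $H_0=k$) already forces the degree $-1$ map to be an isomorphism by the five lemma, regardless of which triangle $Q$ produces. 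The essential surjectivity step is the same in both arguments.
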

\begin{proof}
We observe that analogues of the distinguished triangles
\eqref{ref-3.5-20} exist in $\Dscr$ (with $P_1$ replacing $T$). Indeed, let
$N\in \Ind(\Dscr)$. By \eqref{ref-3.3-14} we have $N\in
\Ind(\Hscr)\cup \{I_2[-1]\}$. If $N\in \Ind(\Hscr)$ then $N[1]\cong
aN$ and the analog of \eqref{ref-3.5-20} is simply the image in $\Dscr$ of the
projective resolution of $aN$ in $\Hscr$ (taking into account that
$P_2=a^{-1}P_1=P_1[-1]$).

If $N=I_2[-1]$ then $N[1]=I_2$ and the analog of \eqref{ref-3.5-20}
is the image in $\Dscr$ of the projective resolution of $I_2$ in $\Hscr$.

To prove that $Q$ is fully faithful we have to prove that $Q$ induces
an isomorphism $\Hom_\Dscr(M,N)\r \Hom_\Tscr(QM,QN)$. Using the
analogues of \eqref{ref-3.5-20} we reduce to $M=P_1[i]$. But since
$\Hom_\Dscr(P_1[i],N)=\Hom_\Dscr(P_1[-1],N[-i-1])$ we reduce in fact to $M=P_1[-1]$.
It now suffices to apply $\Hom_\Dscr(P_1[-1],-)$ to
\[
P_1^a\r P_1^b\oplus P_1[-1]^c\r N[1]\r
\]
taking into account that $\Hom_\Dscr(M,N)\r \Hom_\Tscr(QM,QN)$ is an
isomorphism for $M=P_1$, $N=P_1$, $P_1[1]$, $P_2[2]$ by Lemma
\ref{ref-3.3.2-16}.

As a last step we need to prove that $Q$ is essentially surjective.
But this follows from the distinguished triangles \eqref{ref-3.5-20}
together with the fact that $QP_1=T$.
\end{proof}
To finish the proof of Theorem \ref{ref-1.3-2} we observe that since $\Tscr$ is
$3$-Calabi-Yau, so is $\Dscr$. Hence by Lemma \ref{ref-3.3.1-15} $\pi$
is either self-adjoint or anti self-adjoint. By Lemma \ref{ref-3.2.2-12}
we deduce $a^2\cong\tau$ and hence we may write $a=\tau^{1/2}$.
\begin{remarks} It would be interesting to deduce the fact
that $\pi$ is (anti) self-adjoint directly from the Calabi-Yau property
of $\Tscr$, without going through the construction of $\Dscr$ first. This
would have made our arguments above more elegant.
\end{remarks}
\begin{remarks} Iyama and Yoshino also consider $2n+1$-Calabi-Yau categories $\Tscr$
 equipped with a $2n+1$-cluster tilting object $T$ such that
 $\End(T)=k$ and $\Ext^{-i}(T,T)=0$ for $0<i<n$.  They relate such
 $\Tscr$ to the representation theory of the generalized Kronecker
 quiver $Q_m$ where $m=\dim \Ext^{-n}(T,T)$.

One may show that our techniques
 are applicable to this case as well and yield 
$\Tscr\cong D^b(\mod(kQ_m))/(\tau^{1/2}[-n])$. We thank Osamu Iyama
for bringing this point to our attention. 
\end{remarks}
\section{The singularity category of graded Gorenstein rings}
\subsection{Orlov's results}
\label{ref-6.1-42}
Let $A=k+A_1+A_2+\cdots$ be a commutative finitely generated graded
$k$-algebra.  As in \cite{AZ} we write $\qgr(A)$ for the quotient of
$\gr(A)$ by the Serre subcategory of graded finite length modules.  We
write $\pi:\gr(A)\r \qgr(A)$ for the quotient functor. If $A$ is
generated in degree one and $X=\Proj A$ then by Serre's theorem
\cite{Se} we have $\coh(X)=\qgr(A)$.

Now assume that $A$ is Gorenstein. Then
we have $\RHom_A(k,A)\cong k(a)[-d]$ where $d$ is the Krull dimension of $R$
and $a\in \ZZ$. The
number $a$ is called the Gorenstein parameter of $A$ (see
\cite[Definition 2.1]{Orlov2}).
\begin{examples} If $A$ is a polynomial ring in $n$ variables (of degree
 one) then $d=n$, $a=n$.
\end{examples}
For use below we record another incarnation
of the Gorenstein parameter. Let $A'$ be the graded $k$-dual of $A$. Then
\begin{equation}
\label{ref-6.1-43}
R\Gamma_{A_{{>0}}}(A)\cong A'(a)[-d]
\end{equation}
where $R\Gamma_{A_{>0}}$ denotes cohomology with support in the ideal $A_{>0}$.

The following is a particular
case of \cite[Thm 2.5]{Orlov2}.
\begin{theorems} \label{ref-6.1.2-44}
If $a\ge 0$ then there are fully faithful functors
\[
\Phi_i:\uMCM_{\gr} (A)\r D^b(\qgr(A))
\]
such that for $\Tscr_i= \Phi_i \uMCM_{\gr}(A)$ there is a
semi-orthogonal decomposition
\[
D^b(\qgr(A))=\langle \pi A(-i-a+1),\ldots, \pi A(-i), \Tscr_i\rangle
\]
\end{theorems}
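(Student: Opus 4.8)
The plan is to deduce this from the general structure of Orlov's theorem \cite[Thm 2.5]{Orlov2}, of which this is the special case where the Gorenstein parameter $a$ is nonnegative. First I would recall the setup behind Orlov's construction: the graded singularity category $\uMCM_{\gr}(A) \cong D_{\Sgr}^{\gr}(A)$ (where $D_{\Sgr}^{\gr}(A) = D^b(\gr(A))/\Perf(A)$, using that $A$ is Gorenstein so maximal Cohen–Macaulay modules realize the singularity category by Buchweitz's theorem) carries for each $i \in \ZZ$ a functor to $D^b(\gr(A))$, obtained by choosing for each object a graded MCM representative and viewing it inside the bounded derived category with an appropriate truncation depending on $i$. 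Composing with the quotient $\pi: D^b(\gr(A)) \to D^b(\qgr(A))$ gives $\Phi_i$. The content of Orlov's theorem is then exactly the semiorthogonal decomposition displayed, together with the fact that when $a \ge 0$ the functors $\Phi_i$ are fully faithful; so the bulk of what I would write is an identification of hypotheses rather than a fresh argument.

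Concretely, the key steps would be: (1) verify that $A$ satisfies Orlov's standing assumptions in the relevant range — $A$ is a connected graded commutative noetherian $k$-algebra, Gorenstein, with the Gorenstein parameter $a$ as recorded via $\RHom_A(k,A) \cong k(a)[-d]$ and its dual incarnation \eqref{ref-6.1-43}; (2) invoke \cite[Thm 2.5]{Orlov2} to get, for $a \ge 0$, fully faithful embeddings $\Phi_i: \uMCM_{\gr}(A) \to D^b(\qgr(A))$ whose essential images $\Tscr_i$ fit into the semiorthogonal decomposition $D^b(\qgr(A)) = \langle \pi A(-i-a+1), \ldots, \pi A(-i), \Tscr_i \rangle$; (3) observe that the length of the exceptional block is exactly $a$, matching the number of twists $\pi A(-i), \ldots, \pi A(-i-a+1)$, which is why $a \ge 0$ is needed for the statement to make sense as written (for $a < 0$ the roles of the block and the singularity category are swapped). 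Since $\qgr(A)$ need not equal $\coh(\Proj A)$ unless $A$ is generated in degree one — and the theorem as stated only assumes $A = k + A_1 + \cdots$ finitely generated — I would keep the statement in terms of $\qgr(A)$ and only specialize to $\coh(X)$ later when $A$ is a Veronese of a polynomial ring, where generation in degree one does hold after re-grading appropriately, or more precisely where $\Proj$ of the Veronese is still the projective space.

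The main obstacle, and the only place a genuine argument is needed beyond citing Orlov, is checking that the hypotheses of \cite[Thm 2.5]{Orlov2} are literally met and that the indexing conventions match: Orlov works with the singularity category $D_{\Sgr}^{\gr}(A)$, so I must first record the identification $D_{\Sgr}^{\gr}(A) \cong \uMCM_{\gr}(A)$ for Gorenstein $A$ (the graded analogue of Buchweitz's theorem), and I must make sure the sign conventions on the grading shift $?(1)$ and on the Gorenstein parameter agree with \cite{Orlov2}'s — in particular that our $a$ defined by $\RHom_A(k,A) \cong k(a)[-d]$ is Orlov's Gorenstein parameter and not its negative. Once the dictionary is fixed, the semiorthogonal decomposition and full faithfulness are exactly Orlov's conclusions, so there is nothing further to prove; the theorem is essentially a restatement tailored to the graded MCM language used in the rest of the paper.
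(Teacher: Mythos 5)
Your proposal is correct and matches the paper exactly: the paper offers no independent argument for this statement but simply records it as a particular case of \cite[Thm 2.5]{Orlov2}, which is precisely your strategy of verifying Orlov's hypotheses (Gorenstein, connected graded, parameter $a\ge 0$) and translating via the graded Buchweitz equivalence between the graded singularity category and $\uMCM_{\gr}(A)$. Your additional care about indexing and sign conventions for $?(1)$ and $a$ is sensible bookkeeping but does not constitute a different route.
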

Hence under the hypotheses of the theorem we obtain in particular that
\[
\uMCM_{\gr}(A)\cong {}^\perp\langle \pi A(-i-a+1),\ldots, \pi A(-i)\rangle
\subset D^b(\qgr(A))
\]
for arbitrary $i$.
\subsection{The action of the shift functor on the singularity category}
\label{ref-6.2-45}
Unfortunately the functors $\Phi_i$ introduced in the previous section
are not compatible with $?(1)$. Our aim in this section is to understand
how $?(1)$ acts on the image of $\Phi_i$. This requires us to dig
deeper into Orlov's construction which has the unusual
feature of depending on the category $D^b(\gr_{\ge i}A)$ where
$\gr_{\ge i}A$ are the finitely generated graded $A$-modules with non zero
components
concentrated in degrees $\ge i$. The quotient functor
\[
D^b(\gr_{\ge i}A)\hookrightarrow D^b(\gr A)\xrightarrow{\pi} D^b(\qgr A)
\]
has a right adjoint $R\omega_i A$. Its image is denoted by $\Dscr_i$.

We let $P_i$
be the graded projective $A$-module of rank one generated in degree
$i$ (i.e.\ $P_i=A(-i)$). Likewise $S_i$ is the simple $A$-module concentrated in degree
$i$. As in \cite{Orlov2} we put $\Pscr_{\ge i}=\langle (P_j)_{j\ge i}\rangle$,
$\Sscr_{\ge i}=\langle (S_j)_{j\ge i}\rangle$ and obvious variants
with other types of inequality signs. In \cite{Orlov2} it is proved
that the image $\Tscr_i$ of $\Phi_i$ is the left orthogonal to $\Pscr_{\geq i}$ inside
$D^b(\gr_{\ge i}A)$. The identification of $\Tscr_i$ with the graded singularity
category is through the composition
\begin{equation}
\label{ref-6.2-46}
\Tscr_i\cong D^b(\gr_{\ge i}A)/\Pscr_{\geq i}\cong D^b(\gr A)/\operatorname{perf}(A)
\cong \uMCM_{\gr }(A)
\end{equation}
Assume $a\ge 0$. Then the relation between $\Tscr_i$, $\Dscr_i$ is given
by the following semi-orthogonal decompositions
\[
D^b(\gr A)=\langle \Sscr_{<i}, \overbrace{\Pscr_{\ge i+a},
\underbrace{P_{i+a-1},\ldots,P_i,\Tscr_i}_{\Dscr_i\cong D^b(\qgr(A))}}^{D^b(\gr_{\ge i}A)}\rangle
\]
This is a refinement of Theorem \ref{ref-6.1.2-44}.

The category $\uMCM_{\gr}(A)$ comes equipped with the shift functor
$?(1)$.  We denote the induced endofunctor on $\Tscr_i$ by $\sigma_i$. We will
now compute it. \def\cone{\operatorname{cone}}
\begin{lemmas} For $M\in \Tscr_i\subset D^b(\qgr(A))$ we have
 \begin{equation}
\label{ref-6.3-47}
\sigma_i M=\cone(\RHom_{\qgr(A)}(\pi A(-i),M)\otimes_k
\pi A(-i+1)\r  M(1))
\end{equation}
where the symbol ``cone'' is to be understood in a functorial sense,
for example by computing it on the level of complexes  after first replacing $M$ by an injective
resolution.
\end{lemmas}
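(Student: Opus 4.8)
The plan is to trace Orlov's identification \eqref{ref-6.2-46} carefully and follow an object $M\in\Tscr_i$ through each equivalence, keeping track of what the shift functor $?(1)$ does at each stage. First I would fix $M\in\Tscr_i\subset D^b(\gr_{\ge i}A)$, so that $M$ is the unique object in the left orthogonal of $\Pscr_{\ge i}$ representing the corresponding graded MCM module. Applying $?(1)$ to $M$ produces $M(1)\in D^b(\gr_{\ge i-1}A)$; since the components of $M$ are concentrated in degrees $\ge i$ (after replacing $M$ by a suitable complex of graded modules), $M(1)$ has components in degrees $\ge i-1$. The issue is that $M(1)$ need not lie in $\Tscr_{i}$: it lies in $D^b(\gr_{\ge i-1}A)$ but is only left-orthogonal to $\Pscr_{\ge i-1}$, not to $\Pscr_{\ge i}$. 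To land back in $\Tscr_i$ we must apply the truncation/projection functor $D^b(\gr_{\ge i-1}A)\to D^b(\gr_{\ge i}A)$ combined with the left-orthogonal projection onto $\Tscr_i$, and the content of the lemma is that this composite projection is exactly the cone in \eqref{ref-6.3-47}.

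Concretely, the key step is to compute the left-adjoint to the inclusion of the relevant orthogonal subcategory. From the semi-orthogonal decomposition
\[
D^b(\gr A)=\langle \Sscr_{<i}, \Pscr_{\ge i+a}, P_{i+a-1},\ldots,P_i,\Tscr_i\rangle,
\]
the projection of an object $N\in D^b(\gr_{\ge i}A)$ onto $\Tscr_i$ is obtained by killing off the $P_{i+a-1},\ldots,P_i$ and $\Pscr_{\ge i+a}$ components, i.e.\ by the canonical triangle whose first term is built from $\RHom$ into the $P_j$'s. Since $M(1)$ already lies in $D^b(\gr_{\ge i-1}A)$ and in fact (being $M$ shifted, with $M\in\Tscr_i$) is left-orthogonal to $\Pscr_{\ge i+1}$, the only surviving component to remove is the one along $P_i=A(-i)$. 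The universal triangle extracting that component reads
\[
\RHom_{\gr A}(A(-i),M(1))\otimes_k A(-i)\;\lto\; M(1)\;\lto\; \sigma_i M\;\lto,
\]
and passing through the equivalence $D^b(\gr_{\ge i}A)/\Pscr_{\ge i}\cong D^b(\qgr A)$ — under which $A(-i)\mapsto \pi A(-i)$ and $\RHom_{\gr A}(A(-i),-)$ becomes $\RHom_{\qgr A}(\pi A(-i),-)$ on the subcategory $\Tscr_i\subset\Dscr_i\cong D^b(\qgr A)$ — turns this into precisely \eqref{ref-6.3-47}. The functoriality remark in the statement is handled by performing all of this on injective resolutions in the abelian category $\qgr A$, so that $\mathrm{cone}$ is an honest functor.

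The main obstacle, I expect, is verifying that after shifting, $M(1)$ really is left-orthogonal to everything in the decomposition \emph{except} $P_i$ and $\Tscr_i$ — that is, checking $\RHom_{\gr A}(P_j, M(1))=0$ for $j\ge i+1$ and the vanishing against $\Sscr_{<i}$. The vanishing against $P_j$ for $j\ge i+1$ follows because $M\in\Tscr_i$ is orthogonal to $P_{j-1}$ for $j-1\ge i$, together with $\RHom_{\gr A}(P_j,M(1))=\RHom_{\gr A}(P_{j-1},M)$ (shift-invariance of $\RHom$ in $\gr A$); one must also confirm $M(1)$ still lies in $D^b(\gr_{\ge i}A)$ after the projection, which is where the hypothesis on the concentration of degrees and the $a\ge 0$ assumption enters. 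Once these orthogonality bookkeeping facts are in place, identifying the projection functor with the displayed cone is forced by the uniqueness of semi-orthogonal decompositions, and the translation of $\RHom_{\gr A}(A(-i),-)$ into $\RHom_{\qgr A}(\pi A(-i),-)$ on $\Tscr_i$ is immediate from the fact that $\Dscr_i$ is the image of a fully faithful right adjoint to $\pi$ restricted to $D^b(\gr_{\ge i}A)$.
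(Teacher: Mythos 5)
Your overall route --- follow $M$ through the identification \eqref{ref-6.2-46}, apply $?(1)$, correct $M(1)$ by a perfect complex so as to land back in $\Tscr_i$, and then transport along $\Dscr_i\cong D^b(\qgr(A))$ --- is the paper's route, but the correction you propose is the wrong one, and this is not a notational slip: your triangle does not produce \eqref{ref-6.3-47}. You diagnose the failure of $M(1)$ to lie in $\Tscr_i$ as a failure of orthogonality against $P_i=A(-i)$ and accordingly cone off $\RHom_{\gr(A)}(A(-i),M(1))\otimes_k A(-i)\r M(1)$. In fact orthogonality is not the problem: since $\Hom_{\gr(A)}(X(1),Y(1))=\Hom_{\gr(A)}(X,Y)$ and $M\in{}^\perp\Pscr_{\ge i}$, one has $\RHom_{\gr(A)}(M(1),P_j)=\RHom_{\gr(A)}(M,P_{j+1})=0$ for all $j\ge i-1$, so $M(1)$ is already left orthogonal to all of $\Pscr_{\ge i-1}$, in particular to $P_i$ (note also that your vanishing checks are computed on the wrong side: $\RHom_{\gr(A)}(P_j,M(1))$ is just a graded piece of $M$ and does not vanish, and coning off the evaluation map from $P_i$ is the projection onto the \emph{right} orthogonal $P_i^{\perp}$, not onto $\Tscr_i={}^\perp\Pscr_{\ge i}\cap D^b(\gr_{\ge i}A)$). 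The genuine defect is the degree support: $M(1)$ need not lie in $D^b(\gr_{\ge i}A)$, because its internal degree $i-1$ part is $M_i$. The correct fix is to cone off the evaluation map from copies of $P_{i-1}=A(-i+1)$, namely $\RHom_{\gr(A)}(P_{i-1},M(1))\otimes_k P_{i-1}\r M(1)$: this map is an isomorphism on internal degree $i-1$, so the cone has cohomology in degrees $\ge i$; it preserves left orthogonality to $\Pscr_{\ge i}$ because $\RHom_{\gr(A)}(P_{i-1},P_j)=0$ for $j\ge i$; and it changes $M(1)$ only by a perfect complex, so it is the representative of $\sigma_i M$ in $\Tscr_i$. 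Rewriting $\RHom_{\gr(A)}(P_{i-1},M(1))\cong\RHom_{\gr(A)}(P_i,M)$, which may be computed in $\Dscr_i\cong D^b(\qgr(A))$, and applying $\pi$ gives exactly \eqref{ref-6.3-47}, with coefficient space $\RHom_{\qgr(A)}(\pi A(-i),M)$ and tensoring object $\pi A(-i+1)$.

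By contrast, your cone kills the internal degree $i$ part of $M(1)$ and leaves the degree $i-1$ part untouched, so it lies neither in $D^b(\gr_{\ge i}A)$ nor in $\Tscr_i$; and even granting the passage to $\qgr(A)$, your formula reads $\operatorname{cone}\bigl(\RHom_{\qgr(A)}(\pi A(-i),M(1))\otimes_k\pi A(-i)\r M(1)\bigr)$, whose coefficient space is $\RHom_{\qgr(A)}(\pi A(-i-1),M)$ rather than $\RHom_{\qgr(A)}(\pi A(-i),M)$ and whose tensoring object is $\pi A(-i)$ rather than $\pi A(-i+1)$; this is a different object from the one asserted in the lemma, so the claim that your triangle ``turns into precisely \eqref{ref-6.3-47}'' fails. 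The remaining ingredients of your argument (uniqueness of the representative in $\Tscr_i$ modulo perfect complexes, and computing $\RHom$ via the fully faithful $R\omega_i$) are sound and are indeed what the paper uses, but they cannot rescue the wrong universal map.
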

\begin{proof}
Let $N\in \Tscr_i\subset D^b(\gr(A))$. To compute $\sigma_i N$ we see by \eqref{ref-6.2-46}
that we have
to find $\sigma_i N\in \Tscr_i$ such that $\sigma_i N\cong N(1)$ up to projectives. It
is clear we should take
\[
\begin{aligned}
\sigma_i N&=\cone (\RHom_{\gr(A)}(P_{i-1},N(1))\otimes_k P_{i-1}\r N(1))\\
&=\cone (\RHom_{\gr(A)}(P_{i},N)\otimes_k P_{i-1}\r N(1))
\end{aligned}
\]
Now we note that the $\RHom$ can be computed in $\Dscr_i\cong
D^b(\qgr(A))$. Furthermore since the result lies in
$\Tscr_i\subset\Dscr_i$ we can characterize it uniquely by applying
$\pi$ to it. Since $\pi$ commutes with $?(1)$ we obtain
\eqref{ref-6.3-47} with $M=\pi N$.
\end{proof}

\subsection{The Serre functor for a graded Gorenstein ring}
\label{ref-6.3-48}
Let $A,a,d$ be as above but now assume that $A$ has an isolated
singularity and let $M,N\in \uMCM_{\gr}(A)$. Then by a variant of \cite[Thm
8.3]{IY} we have a canonical graded isomorphism
\[
\Ext^d_A(\underline{\Hom}_A(M,N),A)\cong \underline{\Hom}_A(N,M[d-1])
\]
and furthermore an appropriate version of local duality yields
\[
\Ext^d_A(\underline{\Hom}_A(M,N),A)=\underline{\Hom}_A(M,N)^\ast(a)
\]
In other words we find
\[
\underline{\Hom}_A(M,N)^\ast=\underline{\Hom}_A(N,M[d-1](-a))
\]
and hence the Serre functor $S$ on $\uMCM(A)$ is given by $?[d-1](-a)$.

It is customary to write $S=\tau[1]$ so that we have the usual formula
\[
\underline{\Hom}_A(M,N)^\ast=\Ext^1(N,\tau M)
\]
In this setting we find
\begin{equation}
\label{ref-6.4-49}
\tau=?[d-2](-a)
\end{equation}
\subsection{The Gorenstein parameter of a Veronese subring}
We remind the reader of the following well-known result.
\begin{propositions}
\label{ref-6.4.1-50}
Let $B$ be a polynomial ring in $n$ variables of degree
one. Assume $m\mid n$ and let $B^{(m)}$ be the corresponding Veronese
subring of $B$. I.e.\ $B^{(m)}_i=B_{mi}$. Then $B^{(m)}$ is Gorenstein with
Gorenstein parameter $n/m$.
\end{propositions}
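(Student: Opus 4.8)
The plan is to compute the local cohomology $R\Gamma_{(B^{(m)})_{>0}}(B^{(m)})$ and read off both the dimension and the Gorenstein parameter from the characterization recorded in \eqref{ref-6.1-43}. I take the polynomial-ring case as known: with $\mf{m}=B_{>0}$ one has $R\Gamma_{\mf{m}}(B)\cong B'(n)[-n]$ (as in the Example of \S\ref{ref-6.1-42}; concretely $H^j_{\mf m}(B)=0$ for $j\neq n$ and $H^n_{\mf m}(B)\cong B'(n)$), so $B$ is Gorenstein of dimension $n$ with Gorenstein parameter $n$.

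The key point is a compatibility of local cohomology with the Veronese operation. The elements $x_1^m,\dots,x_n^m$ lie in $B^{(m)}$ and the ideal they generate has radical $(B^{(m)})_{>0}$ (a monomial of degree $m\ell\ge m$ involves some variable, so a high power of it is divisible by the corresponding $x_i^m$); the same elements also generate an ideal with radical $B_{>0}$ in $B$. Hence the Čech (stable Koszul) complex $C^\bullet(x_1^m,\dots,x_n^m;-)$ computes $R\Gamma_{(B^{(m)})_{>0}}$ on graded $B^{(m)}$-modules and $R\Gamma_{\mf m}$ on graded $B$-modules. Since the $m$-th Veronese functor $(-)^{(m)}\colon\Gr(B)\to\Gr(B^{(m)})$ is exact and commutes with the localizations occurring in this complex, so that $C^\bullet(x_1^m,\dots,x_n^m;B^{(m)})=\bigl(C^\bullet(x_1^m,\dots,x_n^m;B)\bigr)^{(m)}$, passing to cohomology gives $H^i_{(B^{(m)})_{>0}}(B^{(m)})\cong\bigl(H^i_{\mf m}(B)\bigr)^{(m)}$ as graded $B^{(m)}$-modules; in particular
\[
R\Gamma_{(B^{(m)})_{>0}}(B^{(m)})\;\cong\;\bigl(B'(n)[-n]\bigr)^{(m)}\;=\;\bigl(B'(n)\bigr)^{(m)}[-n].
\]
It is important that this holds with the module structure, not merely on the level of Hilbert series, which is why I prefer to argue through the explicit complex.

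It then remains to identify $\bigl(B'(n)\bigr)^{(m)}$. Unwinding degrees, $\bigl(B'(n)\bigr)^{(m)}_\ell=\bigl(B'(n)\bigr)_{m\ell}=(B_{-m\ell-n})^\ast$, while $\bigl((B^{(m)})'(n/m)\bigr)_\ell=\bigl((B^{(m)})_{-\ell-n/m}\bigr)^\ast=(B_{-m\ell-n})^\ast$, where $m\mid n$ is used so that $n/m\in\ZZ$; these identifications respect the $B^{(m)}$-actions, so $\bigl(B'(n)\bigr)^{(m)}\cong(B^{(m)})'(n/m)$ and therefore
\[
R\Gamma_{(B^{(m)})_{>0}}(B^{(m)})\;\cong\;(B^{(m)})'(n/m)[-n].
\]
Since $B$ is a finite $B^{(m)}$-module we have $\dim B^{(m)}=n$, and the displayed complex is concentrated in cohomological degree $n$, so $B^{(m)}$ is Cohen--Macaulay; its graded canonical module is $\omega_{B^{(m)}}=\bigl(H^n_{(B^{(m)})_{>0}}(B^{(m)})\bigr)'=B^{(m)}(-n/m)$, which is free of rank one, so $B^{(m)}$ is Gorenstein; and comparing with \eqref{ref-6.1-43}, $R\Gamma_{A_{>0}}(A)\cong A'(a)[-d]$, we read off $d=n$ and Gorenstein parameter $a=n/m$.

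All computations here are routine; the only delicate points are the bookkeeping of the shifts (it is easy to be off by $n$ or by a factor $m$) and making sure the Veronese/local-cohomology compatibility is invoked at the level of modules. One could alternatively argue via $\Proj B^{(m)}\cong\PP^{n-1}$, under which $\mathcal{O}_{\Proj B^{(m)}}(1)$ corresponds to $\mathcal{O}_{\PP^{n-1}}(m)$, computing $H^i_{(B^{(m)})_{>0}}(B^{(m)})_\ell$ from $H^{i-1}(\PP^{n-1},\mathcal{O}(m\ell))$ together with Serre duality; this works as well, but it forces one to treat the low-dimensional cases $n=1,2$ by hand, so the uniform Čech-complex computation above seems cleaner.
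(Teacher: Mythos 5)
Your proof is correct and follows essentially the same route as the paper: both compute $R\Gamma_{(B^{(m)})_{>0}}(B^{(m)})$ by exploiting the compatibility of local cohomology with the Veronese operation (the paper phrases this as insensitivity to the finite extension $B^{(m)}\subset B$ and passes through the ``blown up'' Veronese, while you implement it with the explicit \v{C}ech complex on $x_1^m,\ldots,x_n^m$ and regrade directly) and then read off the parameter from \eqref{ref-6.1-43}. A small bonus of your version is that the concentration of the \v{C}ech cohomology in degree $n$ together with $\omega_{B^{(m)}}\cong B^{(m)}(-n/m)$ also yields the Gorenstein property itself, which the paper simply records as standard.
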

\begin{proof} The Gorenstein property is standard. To compute
the Gorenstein invariant we first let $A$ be the ``blown up'' Veronese.
I.e.
\[
A_i=
\begin{cases}
B_i&\text{if $m\mid i$}\\
0&\text{otherwise}
\end{cases}
\] Let $a,b=n$ be respectively the Gorenstein parameters of
$A$ and $B$. If $M$ is a $B$-module write $M^+$ for $\oplus_i M_{mi}$,
considered as graded $A$-module. We have
\begin{align*}
A'(a)[-n]&=R\Gamma_{A_{>0}}(A)\qquad (\text{see \eqref{ref-6.1-43}})\\
&=R\Gamma_{A_{>0}}(B)^+\\
&=R\Gamma_{B_{>0}}(B)^+\\
&=(B'(b)[-n])^+\\
&=A'(b)[-n]
\end{align*}
In the 3rd equality we have used that local homology is insensitive to
finite extensions. We deduce $a=b=n$. Since $B^{(m)}$ is obtained from
$A$ by dividing the grading by $m$ obtain $n/m$ as Gorenstein parameter
for $B^{(m)}$.
\end{proof}
\begin{remarks}
 In characteristic zero we could have formulated the result for
 invariant rings of finite subgroups of $\Sl_n(k)$ (with the same
 proof).  However in finite characteristic Veronese subrings are not
 always invariant rings (consider the case where the characteristic
 divides $m$).
\end{remarks}
\section{The Iyama-Yoshino examples (again)}
\label{ref-7-51}
\subsection{Example \ref{ref-1.1-0}}
\label{ref-7.1-52}
Let $B=k[x_1,x_2,x_3]$ and $A=B^{(3)}$. We have
$X\overset{\text{def}}{=} \Proj A=\Proj B =\PP^2$.  By Proposition \ref{ref-6.4.1-50}
$A$ has Gorenstein invariant $1$.

Unfortunately we have to deal with the unpleasant notational
problem that the shift functors on $\coh(\PP^2)$ coming
from $A$ and $B$ do not coincide. To be consistent with the
sections \ref{ref-6.1-42},\ref{ref-6.2-45} we will denote them respectively by $?(1)$ and $?\{1\}$. Thus
$?(1)=?\{3\}$. Note that this choice is rather unconventional.

According to Theorem \ref{ref-6.1.2-44}
we have a semi-orthogonal decomposition
\[
D^b(\coh(X))=\langle \Oscr_{\PP^2},\Tscr_0\rangle
\]
From the fact that $D^b(\coh(X))$ has a strong exceptional collection
$\Oscr_{\PP^2}$, $\Oscr_{\PP^2}\{1\}$. $\Oscr_{\PP^2}\{2\}$ we deduce that
there is a semi-orthogonal decomposition
\[
\Tscr_0=\langle \Oscr_{\PP^2}\{1\},\Oscr_{\PP^2}\{2\}\rangle
\]
In particular $\RHom_{\PP^2}(\Oscr_{\PP^2}\{1\}\oplus \Oscr_{\PP^2}\{2\},-)$
defines an equivalence between $\Tscr_0$ and the representations
of the quiver $Q_3$
\[
\psfrag{V}[][]{$V$}
\psfrag{1}[][]{$1$}
\psfrag{2}[][]{$2$}
\includegraphics[width=3cm]{beilinson_mini}
\]
where $V=kx_1+kx_2+kx_3$ and where $\Oscr_{\PP^2}\{i\}$ corresponds to
the vertex labeled by $i$. By \eqref{ref-6.4-49} the Auslander-Reiten
translate on $\uMCM_{\gr}(A)$ is given by $?[1](-1)$. In other words: the shift functor
on $\uMCM_{\gr}(A)$ is given by $(\tau[-1])^{-1}$. By Proposition \ref{gradedcase}
we find (using $R=\widehat{A}$)
\[
\uMCM(R)\cong \uMCM_{\gr}(A)/(1)\cong D^b(\mod(kQ_3))/(\tau[-1])
\]
which is what we wanted to show.
\begin{remarks} Note that this in this example we had
no need for the somewhat subtle formula \eqref{ref-6.3-47}.
\end{remarks}
\subsection{Example \ref{ref-1.2-1}}
We use similar conventions as in the previous section,
Let $B=k[x_1,x_2,x_3,x_4]$ and $A=B^{(2)}$.
We have $X=\Proj A\cong \Proj B= \PP^3$ and we denote the corresponding
shift functors by $?(1)$, $?\{1\}$ so that $?(1)=?\{2\}$.  By Proposition \ref{ref-6.4.1-50}
$A$ has Gorenstein invariant $2$.
By Theorem \ref{ref-6.1.2-44} we have
a semi-orthogonal decomposition
\[
D^b(\coh(X))=\langle \Oscr_{\PP^3},\Oscr_{\PP^3}\{2\},\Tscr_{-1}\rangle
\]
Now $D^b(\coh(X))$ has a strong exceptional collection
$\Oscr_{\PP^3}$, $\Oscr_{\PP^3}\{1\}$. $\Oscr_{\PP^3}\{2\}$,
$\Oscr_{\PP^3}\{3\}$. This sequence is geometric \cite[Prop.\
3.3]{Bondal} and hence by every mutation is strongly exceptional
\cite[Thm.\ 2.3]{Bondal}. We get in particular the following strongly
exceptional collection

$\Oscr_{\PP^3}$,
$\Oscr_{\PP^3}\{2\}$. $\Omega^\ast_{\PP^3}\{1\}$,  $\Oscr_{\PP^3}\{3\}$ where $\Omega_{\PP^3}$
is defined by the exact sequence
\begin{equation}
\label{ref-7.1-53}
0\r \Omega_{\PP^3}\r V\otimes \Oscr_{\PP^3}\{-1\}\r \Oscr_{\PP^3}\r 0
\end{equation}
where $V=kx_1+kx_2+kx_3+kx_4$.  Thus there is a semi-orthogonal
decomposition
\[
\Tscr_{-1}=\langle \Omega^\ast_{\PP^3}\{1\},\Oscr_{\PP^3}\{3\}\rangle
\]
An easy computation yields
\[
\RHom_{\PP^3}(\Omega^\ast_{\PP^3}\{1\},\Oscr_{\PP^3}\{3\})=\wedge^2 V
\]

$\RHom_{\PP^3}(\Omega^\ast_{\PP^3}\{1\}\oplus \Oscr_{\PP^3}\{3\},-)$
defines an equivalence between $\Tscr_{-1}$ and the representations
of the quiver $Q_6$
\[
\label{ref-7.2-54}
\psfrag{V}[][]{$\wedge^2 V$}
\psfrag{1}[][]{$1$}
\psfrag{2}[][]{$2$}
\includegraphics[width=3cm]{beilinson_mini}
\]
Put $W=\wedge^2 V$ and choose an arbitrary trivialization $\wedge^4 V\cong k$.
Let $\pi:W\r W^\ast$ be the resulting (self-adjoint) isomorphism. We
are in the setting of \S\ref{ref-3.2-8} and hence can define $\tau^{1/2}$
as acting on the derived category of $Q_6$.

We will now compute
$\sigma_{-1}(\Omega^\ast_{\PP^3}\{1\})$, $\sigma_{-1}(\Oscr_{\PP^3}\{3\})$.
An easy computation yields
\begin{align*}
\RHom_{\PP^3}(\Oscr_{\PP^3}\{2\},\Omega^\ast_{\PP^3}\{1\})&=V^\ast\\
\RHom_{\PP^3}(\Oscr_{\PP^3}\{2\},\Oscr_{\PP^3}\{3\})&=V
\end{align*}
Using the formula \eqref{ref-6.3-47} we find
\begin{equation}
\label{ref-7.2-55}
\sigma_{-1}(\Oscr_{\PP^3}\{3\})=
\cone (V\otimes \Oscr_{\PP^3}\{4\}\r \Oscr_{\PP^3}\{5\})=\Omega_{\PP^3}\{5\}[1]
\end{equation}
\begin{equation}
\label{ref-7.3-56}
\sigma_{-1}(\Omega^\ast_{\PP^3}\{1\})=
\cone (V^\ast\otimes \Oscr_{\PP^3}\{4\}\r \Omega^\ast_{\PP^3}\{3\})=
\Oscr_{\PP^3}\{3\}[1]
\end{equation}
where in the second line we have used the dual version of \eqref{ref-7.1-53}.

Let $P_i$ be the projective representation of $Q_6$ generated in
vertex $i$. The endo\-functor on $D^b(\mod(kQ_6))$ induced by $\sigma_{-1}$ will
be denoted by the same letter. We will now compute it. From \eqref{ref-7.3-56}
we deduce immediately $\sigma_{-1}(P_1)=P_2[1]$. To analyze \eqref{ref-7.2-55}
we note
that a suitably shifted slice of the Koszul sequence
has the form
\[
0\r \wedge^4 V\otimes \Omega^\ast_{\PP^3}\{1\}\r
\wedge^2 V\otimes\Oscr_{\PP^3}\{3\}
\r \Omega_{\PP^3}\{5\}\r 0
\]
Thus $\Omega_{\PP^3}\{5\}$ corresponds to the cone of
\[
\wedge^4 V\otimes P_1\r \wedge^2 V\otimes P_2
\]
which is easily seen to be equal to $\wedge^4 V\otimes \tau^{-1}P_1$.

If we use our chosen trivialization $\wedge^4 V\cong k$ then we
see that at least on objects $\sigma_{-1}$ coincides with $\tau^{-1/2}[1]$.
It is routine to extend this to an isomorphism of functors by starting
with a bounded complex of projectives in $\mod(kQ_6)$.

By Proposition \ref{gradedcase}
we find (using $R=\widehat{A}$)
\[
\uMCM(R)\cong \uMCM_{\gr}(A)/(1)\cong D^b(\mod(kQ_6))/(\tau^{1/2}[-1])
\]
which is what we wanted to show.
\section{A remark on gradability of rigid modules}
\label{ref-8-57}
We keep notations as in the previous section.
Since in the Iyama-Yoshino examples $\uMCM_{\gr}(A)$ is the derived
category of a hereditary category the functor
\[
\uMCM_{\gr}(A)\r \uMCM_{\gr}(A)/(1)
\]
is essentially surjective \cite{Keller6} and hence
\[
\uMCM_{\gr}(A)\r \uMCM_{\gr}(\widehat{A})
\]
is also essentially surjective. In more complicated examples there is no
reason however why this should be the case. Nevertheless we have
the following result which is probably well-known.
\begin{proposition}
\label{ref-8.1-58}
Assume that $k$ has characteristic zero. Let
$A=k+A_1+A_2+\cdots$ be a left noetherian graded $k$-algebra. Put
$R=\widehat{A}$. Let $M\in \mod(R)$ be such that $\Ext^1_R(M,M)=0$. Then
$M$ is the completion of a finitely generated graded $A$-module $N$.
\end{proposition}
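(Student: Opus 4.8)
The plan is to equip $M$ with a grading \emph{infinitesimally} --- as a connection along the Euler vector field --- and then to integrate that connection using the characteristic-zero hypothesis. Write $\mf m$ for the maximal ideal of $R=\widehat A$ and let $E\colon R\to R$ be the continuous Euler derivation, $E(\sum_i a_i)=\sum_i i\,a_i$ for $a_i\in A_i$; note $E(\mf m^j)\subseteq\mf m^j$. Call a $k$-linear endomorphism $\nabla$ of an $R$-module $P$ an \emph{$E$-connection} if $\nabla(rp)=E(r)p+r\,\nabla(p)$ for all $r\in R$, $p\in P$. The proof will have two parts: (i)\ using $\Ext^1_R(M,M)=0$ to produce an $E$-connection $\nabla$ on $M$, and (ii)\ showing that any such $\nabla$ already exhibits $M$ as the completion of a finitely generated graded $A$-module.

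For (i): pick a resolution $F_\bullet\to M$ by finitely generated free $R$-modules (possible since $R$, being the $\mf m$-adic completion of the noetherian ring $A$, is noetherian), and lift $E$ coordinatewise to $E$-connections $\widetilde E_\bullet$ on the $F_\bullet$. The commutator $\widetilde E\circ d-d\circ\widetilde E$ is then a family of \emph{$R$-linear} maps $F_i\to F_{i-1}$ which (anti)commutes with the differential, hence represents a class in $\Ext^1_R(M,M)$ --- namely the contraction of the Atiyah class of $M$ with $E$; equivalently one argues via the first-jet sequence of $M$. Since this group vanishes, the $\widetilde E_i$ may be corrected by $R$-linear maps so as to commute with $d$; the corrected $\widetilde E_0$ then preserves $\operatorname{im}(d_1)$ and descends to an $E$-connection $\nabla$ on $M$.

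For (ii): since $E(\mf m^j)\subseteq\mf m^j$ and $M$ is finitely generated, the Leibniz rule gives $\nabla(\mf m^jM)\subseteq\mf m^jM$, so $\nabla$ induces compatible endomorphisms $\bar\nabla_j$ on the finite-dimensional spaces $M/\mf m^jM$. Decomposing each $M/\mf m^jM$ into the primary components of $\bar\nabla_j$ (its generalized eigenspaces over $\bar k$), the identity $\nabla(am)=i\,am+a\,\nabla(m)$ for $a\in A_i$ shows that multiplication by $A_i$ sends the component indexed by an irreducible $q\in k[x]$ into the one indexed by $q(x-i)$, and that every component of every $\bar\nabla_j$ is a translate $q(x-\ell)$, $\ell\ge 0$, of a component of $\bar\nabla_1$ on $M/\mf mM$. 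Hence only finitely many $\ZZ$-orbits $\{q(x-n):n\in\ZZ\}$ of components occur, each bounded below along the orbit, so one can choose a $\ZZ$-valued degree function $q\mapsto\deg q$ with $\deg\bigl(q(x-i)\bigr)=\deg q+i$ that is bounded below on the components that actually occur. Passing to the inverse limit over $j$ --- the transition maps are surjective on primary components --- yields $M\cong\widehat{\bigoplus_\ell N_\ell}$ with $N_\ell=\varprojlim_j(M/\mf m^jM)_{\deg=\ell}$ and $A_i\cdot N_\ell\subseteq N_{\ell+i}$. Lifting a homogeneous $k$-basis of $M/\mf mM$ to homogeneous elements of $M$, which generate $M$ over $R$ by Nakayama, forces each $N_\ell$ to be finite-dimensional and $N:=\bigoplus_\ell N_\ell$ to be a finitely generated, bounded-below, $\ZZ$-graded $A$-module; the equalities $\mf m^jM\cap N=\mf m^jN$ and $N/\mf m^jN\xrightarrow{\sim}M/\mf m^jM$ then give $\widehat N\cong M$.

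I expect step (ii) to be the main obstacle: one must see that the purely infinitesimal datum of an $E$-connection already determines an honest bounded-below grading with finite-dimensional homogeneous pieces. The delicate points are controlling the spectra of the $\bar\nabla_j$ --- keeping the degrees bounded below and only finitely many $\ZZ$-orbits of primary components in play --- and the compatibility of these decompositions as $j$ grows; characteristic zero is used exactly here, in forming and integrating this spectral decomposition, and the statement should fail in positive characteristic. By comparison, step (i) is formal once the obstruction is correctly located in $\Ext^1_R(M,M)$, and the final passage from $N$ to $\widehat N\cong M$ is routine graded-Nakayama and $\mf m$-adic bookkeeping.
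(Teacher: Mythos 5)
Your proposal is correct and follows essentially the same route as the paper: the vanishing of $\Ext^1_R(M,M)$ is used to produce an Euler connection $\nabla$ on $M$ (the paper locates the obstruction as the splitting of the square-zero extension $M[\epsilon]$ with $a\cdot m=(a+E(a)\epsilon)m$, which is the same class as your Atiyah-class construction on a free resolution), and your step (ii) is exactly the paper's Lemma \ref{ref-8.2-59}: decompose each $M/\mf{m}^jM$ into generalized eigenspaces (primary components) of the induced operator, note that the multiplicities stabilize, and reassemble a finitely generated graded module, shifting along $\ZZ$-orbits to turn the eigenvalue grading into a $\ZZ$-grading. Your handling of a non-algebraically-closed $k$ via irreducible polynomials is a slightly more careful version of the same argument, not a different one.
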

In the rest of this section we let the notations and hypotheses be as in the statement
of the proposition (in particular $k$ has characteristic zero). We denote
the maximal ideal of $R$ by $m$.

Let $E$ be the Euler derivation on $A$ and
$R$. I.e.  on $A$ we have $E(a)=(\deg a)a$ and we extend $E$ to $R$ in
the obvious way. If $M\in \mod(R)$ then we will define an Euler
connection as a $k$-linear map $\nabla:M\r M$ such that
$\nabla(am)=E(a)m+a\nabla(m)$. If $M=\widehat{N}$ for $N$ a graded
$A$-module then $M$ has an associated Euler connection  by extending
$\nabla(n)=(\deg n)n$ for $n$ a homogeneous element of $N$.
\begin{lemma}
 \label{ref-8.2-59} Let $M$ be a finitely generated $R$
 module. Then $M$ has an Euler connection if and only if $M$ is the
 completion of a finitely generated graded $A$-module.
\end{lemma}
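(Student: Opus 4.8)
The plan is to prove the non‑trivial implication: starting from an Euler connection $\nabla$ on $M$, I would construct a finitely generated graded $A$-module $N$ with $\widehat N\cong M$ (the converse is the remark recorded just above the lemma, the point being that $E$ is literally the degree operator on $R=\widehat A$). The first step is a filtration compatibility: since $E$ preserves degrees one has $E(m)\subseteq m$, hence $\nabla(m^jM)\subseteq m^jM$ for all $j$, so $\nabla$ induces an endomorphism $\bar\nabla_j$ of the finite dimensional space $M/m^jM$ (recall $R/m=k$). I would then decompose $M/m^jM=\bigoplus_{[p]}\ker p(\bar\nabla_j)^{\infty}$ over the monic irreducible factors $p\in k[t]$; because the reductions $M/m^{j+1}M\to M/m^jM$ are surjective and commute with the $\bar\nabla$'s, these primary decompositions are compatible, and I can pass to the limit, setting $M_{[p]}=\varprojlim_j\ker p(\bar\nabla_j)^{\infty}$ and $N=\bigoplus_{[p]}M_{[p]}\subseteq M$. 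Concretely $N$ is the subspace of vectors on which $\nabla$ acts locally finitely, and it is there — not in the raw eigenvalues of $\nabla$ — that the grading will live.

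The ring‑theoretic heart is then one line: for $a\in A_d$ one has $(\nabla-d)(av)=a\nabla v$ (from $\nabla(av)=E(a)v+a\nabla v=dav+a\nabla v$), i.e. $(\nabla-d)\circ(a\cdot)=(a\cdot)\circ\nabla$, so $q(\nabla-d)(av)=a\,q(\nabla)v$ for every $q\in k[t]$. Hence $N$ is an $A$-submodule of $M$ and $A_d$ carries the primary part $M_{[p]}$ into $M_{[p(t-d)]}$. To turn this into a $\ZZ$-grading I must know that, fixing a representative $p_0$ of a $\ZZ$-orbit of irreducibles, the summand $M_{[p_0(t-n)]}$ is nonzero for only finitely many $n$ and vanishes for $n$ small; then placing $M_{[p_0(t-n)]}$ in degree $n$ makes $N$ a graded $A$-module with finite dimensional components. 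Both this boundedness and the finite dimensionality (in fact eventual stabilization in $j$) of each $M_{[p]}$ follow from the finite generation of $M$: $M/mM$ is finite dimensional with $\bar\nabla_1$-eigenvalues a finite set $\lambda_1,\dots,\lambda_r$, and climbing the $\bar\nabla_j$-stable filtration $M/m^jM\supseteq mM/m^jM\supseteq\cdots$ one checks that each subquotient $m^iM/m^{i+1}M$ is spanned by products $a_1\cdots a_iv$ with the $a_\ell\in A_{\ge1}$ homogeneous and $v$ a lift of a generalized eigenvector, and such a product lies in the primary part with eigenvalue $\lambda_s+\sum_\ell\deg a_\ell$; hence every eigenvalue occurring in any $\bar\nabla_j$ lies in $\bigcup_s(\lambda_s+\ZZ_{\ge0})$, and for a fixed eigenvalue only finitely many $i$ contribute.

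Finally I would check $\widehat N\cong M$, which will also show $N$ is finitely generated. Lift a $k$-basis of $M/mM$ to homogeneous $x_1,\dots,x_n\in N$ — possible because each $M_{[p]}$ surjects onto $\ker p(\bar\nabla_1)^{\infty}$ after stabilization, so $N\twoheadrightarrow M/mM$. A Nakayama argument (from $\sum Ax_i+mM=M$ one gets $\sum Ax_i+m^jM=M$ for all $j$, then project onto the stabilized $M_{[p]}$) shows the $x_i$ generate $N$; in particular $N$ is finitely generated over $A$ and $N+m^jM=M$ for every $j$. It remains to identify the $A_{\ge1}$-adic topology of $N$ with the one induced by the $m$-adic topology of $M$: the inclusion $A_{\ge1}^jN\subseteq m^jM\cap N$ is immediate, and conversely the stabilization of the $M_{[p]}$ forces $m^jM\cap N\subseteq N_{\ge c+j'}$ with $j'\to\infty$, which lies in $A_{\ge1}^{j_0}N$ once $j$ is large because $N$ is finitely generated. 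Then $\widehat N=\varprojlim N/A_{\ge1}^jN=\varprojlim N/(m^jM\cap N)$ maps isomorphically onto $\varprojlim M/m^jM=M$ (injective since $M$ is $m$-adically separated, $R$ being Noetherian; surjective from $N+m^jM=M$).

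The hard part will be this last step — pinning down the $m$-adic topology on the a priori large‑looking submodule $N\subseteq M$ and matching it with its intrinsic $A_{\ge1}$-adic topology — together with the purely combinatorial bookkeeping of the $\ZZ$-cosets of eigenvalues that is needed merely in order to write down the grading; by contrast the algebraic core, the eigenvalue shift $(\nabla-d)\circ(a\cdot)=(a\cdot)\circ\nabla$ coming from the Leibniz rule, is essentially free.
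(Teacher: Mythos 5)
Your proof is correct and takes essentially the same route as the paper: decompose each finite-dimensional quotient $M/m^nM$ into generalized eigenspaces (primary components) for the induced operator $\nabla$, use the filtration by $m^iM/m^{i+1}M$ (equivalently the right exact sequences $(m/m^2)^{\otimes n}\otimes M/mM\r M/m^{n+1}M\r M/m^nM\r 0$) to see that each eigencomponent stabilizes, and take $N$ to be the direct sum of the limiting components, regraded along $\ZZ$-orbits. Your extra care with primary decomposition over $k[t]$ for non-algebraically-closed $k$, the Nakayama-style proof of finite generation, and the explicit comparison of the $A_{\ge 1}$-adic and induced $m$-adic topologies to get $\widehat{N}\cong M$ are just more detailed versions of steps the paper treats briefly.
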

\begin{proof} We have already explained the easy direction. Conversely
assume that $M$ has an Euler connection. For each $n$ we have that
$M/m^n M$ is finite dimensional and hence it decomposes into generalized
eigenspaces for $\nabla$.
\[
M/m^n M=\prod_{\alpha\in k}(M/m^n M)_\alpha  \qquad \text{(finite product)}
\]
Considering right exact sequences
\[
(m/m^2)^{\otimes n}\otimes  M/mM\r M/m^{n+1}M\r M/m^n M\r 0
\]
we easily deduce that the multiplicity of a fixed generalized eigenvalue
in $M/m^n M$ stabilizes a $n\r \infty$. Thus $M=\prod_{\alpha\in k} M_\alpha$
where $M_\alpha$ is a generalized eigenspace with eigenvalue $\alpha$.
We put $N'=\oplus_\alpha M_\alpha$. Then $N'$ is noetherian since
obviously any ascending chain of graded submodules of $N'$ can be
transformed into an ascending chain of submodules in $M$. If particular
$N'$ is finitely generated and we have $M=\widehat{N}'$.

Now $N'$ is $k$-graded and not $\ZZ$-graded. But we can decompose $N'$
along $\ZZ$-orbits and then by taking suitable shifts we obtain
a $\ZZ$-graded module with the same completion as $N'$.
\end{proof}
\begin{proof}[Proof or Proposition \ref{ref-8.1-58}]
Let $\epsilon^2=0$ and consider $M[\epsilon]$ where $A$ acts via
$a\cdot m=(a+E(a)\epsilon)m$.  We have a short exact sequence of $A$-modules
\[
0\r M\epsilon\r M[\epsilon]\r M\r 0
\]
which is split by hypotheses. Denote the splitting by $m+\nabla(m)\epsilon$.
For $a\in A$ we have
\[
am+\nabla(am)\epsilon=(a+E(a)\epsilon)(m+\nabla(m)\epsilon)
\]
and hence
\[
\nabla(am)=E(a)m+a\nabla(m)
\]
Hence $\nabla$ is an Euler connection and so we may invoke Lemma
\ref{ref-8.2-59} to show that $M=\widehat{N}$.
\end{proof}

\appendix

\section{Generators of singularity categories}

Throughout
$(A,\mf{m},k)$ is a (commutative) local noetherian ring, with maximal
ideal $\mf{m}$ and residue field $k$. The \emph{singularity category}
of $A$ is the Verdier quotient
\[
\dbsing(A) := \qderu{b}{\mod A}/\K^b(\proj A)
\]
of the bounded derived category of finitely generated $A$-modules by
the full subcategory of perfect complexes. Recall that a functor $F:
\cat{T} \lto \cat{S}$ is an \emph{equivalence up to direct summands}
if $F$ is fully faithful and every object $X \in \cat{S}$ is a direct
summand of $F(Y)$ for some $Y \in \cat{T}$. We say that $A$ is a
\emph{G-ring} if the canonical morphism from $A$ to its $\mf{m}$-adic
completion $A \lto \widehat{A}$ is regular \cite[\S 32]{Matsumura},
and that $A$ \emph{has an isolated singularity} if $A_\mf{p}$ is
regular for every non-maximal prime ideal $\mf{p}$ of $A$. Our main
result about singularity categories is the following:
\begin{proposition}\label{theorem:first_theorem} Let $A$ be a local
  noetherian ring with an isolated singularity, which is also a G-ring
  \textup{(}e.g. $A$ is essentially of finite type over a
  field\textup{)}. Then the canonical functor
\[
\gamma := - \otimes_A \widehat{A}: \dbsing(A) \lto \dbsing(\widehat{A})
\]
is an equivalence up to direct summands.
\end{proposition}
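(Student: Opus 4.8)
The plan is to show that $\gamma := - \otimes_A \widehat{A}$ is fully faithful and that every object of $\dbsing(\widehat{A})$ is a summand of an object in the image. The key structural input is that, for a ring with isolated singularity, the singularity category has a very rigid finiteness property: it is generated (as a thick subcategory, or even more strongly) by the residue field $k$, and the morphism spaces between objects are finite-dimensional over $k$ — in fact, for a Gorenstein isolated singularity, $\dbsing(A) \simeq \uMCM(A)$ is Hom-finite. More precisely, I would use the following two facts. First, $\dbsing(A)$ is \emph{idempotent-complete-up-to-summands-generated by $k$}: every object $X$ lies in $\operatorname{thick}(k)$, because on the punctured spectrum $X$ becomes perfect (isolated singularity), and a standard d\'evissage/Noetherian induction argument — going back to Schoutens, or to the theory of the support of complexes — lets one build $X$ from $k$ using finitely many triangles and summands. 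Second, $\gamma(k) = k$ (the residue field is preserved, since $\widehat{A}/\widehat{\mf m} = A/\mf m = k$).

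Granting these, here is how the argument runs. First I would verify that $\gamma$ is \emph{fully faithful on the thick subcategory generated by $k$}, by checking it induces an isomorphism
\[
\operatorname{Hom}_{\dbsing(A)}(k, k[n]) \;\longrightarrow\; \operatorname{Hom}_{\dbsing(\widehat{A})}(k, k[n])
\]
for all $n \in \ZZ$. Both sides can be computed as stable $\operatorname{Ext}$ groups, which in the local setting are finitely generated over the (Noetherian, local) endomorphism ring and are in fact finite-dimensional over $k$ since the singularity is isolated; and completion is flat and does not change $\operatorname{Ext}^i_A(k,k)$ because these are already finite length $A$-modules, hence complete. This gives the isomorphism on the generator, and since full faithfulness of a triangulated functor is preserved under taking thick closures (a two-out-of-three / five-lemma argument on triangles and retracts), $\gamma$ is fully faithful on $\operatorname{thick}(k) = \dbsing(A)$.

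Next I would address essential surjectivity up to summands. By the same d\'evissage applied on the $\widehat{A}$-side, every object of $\dbsing(\widehat{A})$ lies in $\operatorname{thick}(k)$, so it is built from $k = \gamma(k)$ by finitely many cones and retracts. Since $\gamma$ is a triangulated functor that is fully faithful, its image is a thick subcategory containing $k$; hence the image is all of $\dbsing(\widehat{A})$ once we pass to the idempotent completion — which is exactly the content of ``equivalence up to direct summands.'' (Here one uses that a fully faithful triangulated functor whose image is a triangulated — not necessarily thick — subcategory becomes an equivalence onto that subcategory, and that the thick closure is obtained by adjoining summands.)

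The main obstacle, and the step deserving the most care, is the \emph{generation statement}: that for a local ring with isolated singularity, $\dbsing(A)$ is generated by the residue field up to thick closure (equivalently, every object is, up to summands, built from finitely many shifts of $k$). Without the G-ring hypothesis one must also be careful that this generation statement transfers correctly under completion — this is precisely where regularity of $A \to \widehat{A}$ enters, ensuring that $\widehat{A}$ again has an isolated singularity and that localization behaviour is preserved. The finiteness of the relevant $\operatorname{Ext}$-modules over $A$ (so that they are unchanged by $\mf m$-adic completion) is the technical heart that makes $\gamma$ an isomorphism on morphism spaces; everything else is formal manipulation of triangulated categories and thick subcategories.
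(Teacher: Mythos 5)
Your overall architecture matches the paper's: both arguments rest on the two pillars that $\dbsing(A)$ is classically generated by $k$ when the singularity is isolated (and that the G-ring hypothesis, via Wiegand, guarantees $\widehat{A}$ again has an isolated singularity, so the same generation holds upstairs), and both conclude essential surjectivity up to summands by observing that the thick closure of the image contains $k$ and that thick closure means adjoining retracts. Where you genuinely diverge is the proof of full faithfulness: you compute morphism spaces on the generator, $\Hom_{\dbsing(A)}(k,k[n])\to\Hom_{\dbsing(\widehat A)}(k,k[n])$, via flat base change and a finiteness argument, and then spread full faithfulness to $\operatorname{thick}(k)$ by d\'evissage; the paper instead never computes any Hom's: it passes to the enlarged categories $\dbsinge(A)$, $\dbsinge(\widehat A)$, notes that restriction of scalars is a right adjoint to $-\otimes_A\widehat A$ there (this uses Raynaud--Gruson, that flat modules have finite projective dimension, so restriction lands in $\dbsinge(A)$), and checks the unit $1\to(-\otimes_A\widehat A)_A$ is invertible on $k$, hence on $\operatorname{thick}(k)$. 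The adjunction route is what your route buys you only after extra work, and two of your steps need more care than you give them. First, the generation statement is not the ``standard d\'evissage/Noetherian induction'' you suggest: an object that is perfect on the punctured spectrum need not have finite-length cohomology, so one cannot filter it by copies of $k$ directly; the paper's proof (or Schoutens', whom you may of course cite) shows $M$ is a \emph{retract} of a Koszul-type complex $\ukoszul{\bold a^k}{A}\otimes_A M$ with finite-length cohomology, and establishing that retraction is exactly the technical content (homotopy colimits in $\dbsinge(A)$, plus the compactness Lemmas \ref{lemma:simple_factorisation} and \ref{lemma:fg_commutes_coproducts}). Second, the groups $\Hom_{\dbsing(A)}(k,k[n])$ are stabilized Ext's, not $\Ext^n_A(k,k)$; to see they are unchanged by completion you need the colimit description $\varinjlim_n\underline{\Hom}(\Omega^n M,\Omega^n N)$, termwise flat base change, and the fact that these stable Hom's are $\mf m$-power torsion (this is where the isolated singularity enters again) -- fixable, but not the one-line ``finite length, hence complete'' you assert. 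With those two points filled in, your argument is a valid alternative to the paper's.
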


This is a special case of a general result by Orlov \cite{Orlov09} (which
was obtained indepedently). Our methods are quite different however. 

When $A$ is Gorenstein there is an equivalence, due to Buchweitz
\cite{BuchweitzAlone}, between $\dbsing(A)$ and the stable category of
maximal Cohen-Macaulay $A$-modules $\underline{\MCM}(A)$, so in this
case we obtain Proposition \ref{ref-1.5-5}. We remark that, in
general, $\gamma$ is not an equivalence (see e.g.\ Example
\ref{example:not_an_equivalence}).

Let us outline the proof of the proposition. Recall that a \emph{thick
  subcategory} of a triangulated category $\cat{T}$ is a triangulated
subcategory closed under retracts. Given an object $C$ of $\cat{T}$,
we say that an object $X$ is \emph{finitely built} from $C$ if it
belongs to the smallest thick subcategory of $\cat{T}$ containing
$C$. If every object of $\cat{T}$ has this property, that is, if there
are no proper thick subcategories of $\cat{T}$ containing $C$, then
$C$ is said to \emph{classically generate} $\cat{T}$.

The local ring $A$ and its completion $\widehat{A}$ have the same
residue field $k$, and it is not difficult to see that $\gamma$
induces an equivalence between the respective subcategories consisting
of objects finitely built from $k$. The subtlety lies in showing that,
because $A$ has an isolated singularity, \emph{every} object can be
finitely built from $k$. Our proof of this fact uses homotopy
colimits, which presents a technical problem since $\dbsing(A)$ lacks
infinite coproducts. One approach is to enlarge the category
$\dbsing(A)$ by considering the Verdier quotient
\[
\dbsinge(A) := \qderu{b}{\Mod A} / \K^b(\Proj A)
\]
of the bounded derived category of all $A$-modules by the full subcategory of bounded complexes of projective $A$-modules. By \cite[Proposition 1.13]{Orlov04} the canonical functor $\dbsing(A) \lto \dbsinge(A)$ is fully faithful, and $\dbsinge(A)$ turns out to contain enough coproducts (and thus homotopy colimits) for our purposes. Throughout $\qder{A}$ denotes the (unbounded) derived category of $A$-modules.

\medskip

The next proposition follows immediately from the work of Schoutens
\cite{Schoutens03}, but we give a direct proof in the special case of
an isolated singularity. The result also follows from the general
result by Orlov \cite{Orlov09} and Dyckerhoff
\cite{Dyckerhoff} has given a proof based on the theory of matrix
factorizations in the hypersurface case.


\begin{proposition}\label{theorem:dbsing_generated_k} A local noetherian ring $(A,\mf{m},k)$ has an isolated singularity if and only if $\dbsing(A)$ is classically generated by $k$.
\end{proposition}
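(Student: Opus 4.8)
The plan is to prove both implications of Proposition \ref{theorem:dbsing_generated_k} separately, with the forward direction (isolated singularity $\Rightarrow$ $k$ classically generates $\dbsing(A)$) being the substantial one. For the easy converse, suppose $k$ classically generates $\dbsing(A)$ and let $\mf{p}$ be a non-maximal prime. Localization $(-)_\mf{p}$ is exact and sends $\K^b(\proj A)$ into $\K^b(\proj A_\mf{p})$, hence induces a triangulated functor $\dbsing(A) \to \dbsing(A_\mf{p})$. Since $k_\mf{p} = 0$, the image of every object of $\dbsing(A)$ lies in the thick subcategory generated by $0$, i.e.\ is zero; in particular $A/\mf{p}$ (or better, the residue field $\kappa(\mf{p})$, built from $A_\mf{p}/\mf{p}A_\mf{p}$) becomes zero in $\dbsing(A_\mf{p})$, which forces $A_\mf{p}$ to have finite global dimension, hence to be regular. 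So $A$ has an isolated singularity.

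For the forward direction, the plan is to work inside the larger category $\dbsinge(A) = \qderu{b}{\Mod A}/\K^b(\Proj A)$, into which $\dbsing(A)$ embeds fully faithfully by \cite[Prop.\ 1.13]{Orlov04}, and which has enough coproducts and homotopy colimits. First I would fix an object $X \in \qderu{b}{\mod A}$ and reduce to showing $X$ is finitely built from $k$ in $\dbsing(A)$; since the thick closure of $k$ in $\dbsing(A)$ is the restriction of the localizing-type argument in $\dbsinge(A)$, it suffices to show $X$ is built from $k$ using the coproducts available in $\dbsinge(A)$ and then descend. The key mechanism is this: for each prime $\mf{p} \neq \mf{m}$, the complex $X$ becomes perfect after localizing at $\mf{p}$ (this is exactly the isolated-singularity hypothesis, via Auslander--Buchsbaum / the fact that $X_\mf{p} \in \K^b(\proj A_\mf{p})$), so the ``non-built-from-$k$'' part of $X$ is supported only at $\mf{m}$. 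I would make this precise by a Koszul / local cohomology argument: let $\underline{x} = x_1,\dots,x_n$ generate $\mf{m}$, and consider the stable Koszul (\v{C}ech) complex $\cech{\underline{x}}{A}$ computing $R\Gamma_\mf{m}$. Smashing $X$ with the \v{C}ech complex produces a triangle whose ``away-from-$\mf{m}$'' term is built from the localizations $X[x_i^{-1}]$, each of which is perfect (zero in $\dbsinge$), while the $R\Gamma_\mf{m}(X)$ term is, up to the perfect part, a homotopy colimit of the Koszul objects $X \Lotimes_A \ukoszul{\underline{x}^t}{A}$, and these Koszul objects have finite-length cohomology, hence are finitely built from $k$ in $\qderu{b}{\mod A}$ already.

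Concretely the chain of reductions is: (1) $X \cong R\Gamma_\mf{m}(X)$ in $\dbsinge(A)$, because the complementary term $X \otimes_A^L (\text{telescope of }A[x_i^{-1}])$ is a finite homotopy colimit built from perfect complexes and hence $0$ in $\dbsinge(A)$; (2) $R\Gamma_\mf{m}(X) = \operatorname{hocolim}_t \big( X \Lotimes_A \ukoszul{\underline{x}^t}{A} \big)$ in $\dbsinge(A)$; (3) each Koszul tensor $X \Lotimes_A \ukoszul{\underline{x}^t}{A}$ has bounded cohomology of finite length over $A$, so it lies in the thick subcategory of $\qderu{b}{\mod A}$ generated by $k$, and its image in $\dbsing(A) \subset \dbsinge(A)$ lies in the thick closure of $k$ there; (4) therefore $X$, being a homotopy colimit of objects in the localizing subcategory generated by $k$, lies in that localizing subcategory of $\dbsinge(A)$ — and then the standard argument (the ``two out of three'' / Neeman-style lemma that a compact object lying in a localizing subcategory generated by a set of compacts already lies in the thick subcategory generated by that set) shows $X$ is in fact \emph{finitely} built from $k$ inside $\dbsing(A)$. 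Running this for $X = A$-free-resolution of an arbitrary finitely generated module shows $k$ classically generates $\dbsing(A)$.

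The main obstacle I anticipate is step (4): passing from ``$X$ lies in the localizing subcategory generated by $k$ in $\dbsinge(A)$'' back to ``$X$ is finitely built from $k$ in the small category $\dbsing(A)$''. This needs a compactness argument — one must know that $\dbsing(A)$ sits inside $\dbsinge(A)$ as (a subcategory of) the compact objects, or at least that objects of $\dbsing(A)$ are compact relative to the coproducts used, and then invoke the Thomason--Neeman localization philosophy that a compact object in $\operatorname{Loc}(k)$ is already in $\operatorname{thick}(k)$. Verifying the required compactness in $\dbsinge(A)$ (it is not a compactly generated category in the naive sense, so one has to be a little careful about which homotopy colimits exist and behave well) is the delicate technical point, and is presumably where Orlov's \cite[Prop.\ 1.13]{Orlov04} and the explicit structure of $\K^b(\Proj A)$ are used. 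Everything else — the \v{C}ech triangle, the finite-length claim for Koszul complexes, the vanishing of localizations — is routine commutative algebra.
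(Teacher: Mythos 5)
Your easy direction and the overall strategy of the hard direction (pass to $\dbsinge(A)$, use the stable Koszul/\v{C}ech triangle, kill the \v{C}ech part using the isolated singularity so that $M$ becomes the homotopy colimit of the finite Koszul stages $\ukoszul{\bold{a}^i}{A}\otimes_A M$, which have finite length cohomology and are therefore finitely built from $k$) coincide with the paper's proof. The genuine gap is exactly your step (4), and it is not merely a technicality you can outsource: the principle ``a compact object lying in the localizing subcategory generated by $k$ lies in $\operatorname{thick}(k)$'' is a theorem about compactly generated triangulated categories with all set-indexed coproducts, and $\dbsinge(A)$ is neither compactly generated nor closed under arbitrary coproducts --- it only admits the uniformly bounded coproducts inherited from $\qderu{b}{\Mod A}$. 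Consequently ``the localizing subcategory of $\dbsinge(A)$ generated by $k$'' is not a usable notion there, and there is no Thomason--Neeman theorem to quote; Orlov's full faithfulness of $\dbsing(A)\to\dbsinge(A)$ does not supply the missing compactness either.

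The paper closes this gap without any Loc/thick machinery, by exploiting that the relevant homotopy colimit is a countable telescope: from the triangle $\bigoplus_i \ukoszul{\bold{a}^i}{A}\otimes_A M \xrightarrow{1-\mathrm{shift}} \bigoplus_i \ukoszul{\bold{a}^i}{A}\otimes_A M \to M$ one only needs that $\Hom_{\dbsinge(A)}(M,-)$ commutes with these particular bounded coproducts. That is Lemma \ref{lemma:fg_commutes_coproducts}, proved via the factorization Lemma \ref{lemma:simple_factorisation} together with the fact that objects of $\qderu{b}{\mod A}$ are compact in $\qderu{b}{\Mod A}$ (Rouquier); Neeman's telescope argument then gives $\Hom(M,M)\cong\varinjlim_i\Hom(M,\ukoszul{\bold{a}^i}{A}\otimes_A M)$, so $1_M$ factors through a single finite Koszul stage and $M$ is a direct summand of it, and full faithfulness of $\dbsing(A)\to\dbsinge(A)$ transports the splitting back to $\dbsing(A)$. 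Alternatively, you could make your Loc/thick argument legitimate by embedding $\dbsing(A)$ into Krause's compactly generated category $\K_{\ac}(\Inj A)$ as compact objects, but then you must still verify that a retract formed in the big category of an object of $\dbsing(A)$ finitely built from $k$ already lies in the thick closure inside $\dbsing(A)$ itself (again via full faithfulness). As written, the decisive step of your proof rests on a theorem that does not apply to the category in which you invoke it.
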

\begin{proof} 
We begin with the easy direction. Suppose that $\dbsing(A)$ is classically generated by $k$, and let $\mf{p} \neq \mf{m}$ be a prime ideal. The canonical functor $- \otimes_A A_\mf{p}: \dbsing(A) \lto \dbsing(A_\mf{p})$ is identically zero, because it sends the generator $k$ to zero. The image of this functor contains the residue field $\kappa(\mf{p}) = A/\mf{p} \otimes_A A_\mf{p}$, from which we deduce that $\kappa(\mf{p})$ has finite projective dimension over $A_\mf{p}$. Hence $A_\mf{p}$ is regular, and we may conclude that $A$ has an isolated singularity.

Now suppose that $A$ has an isolated singularity, and let $M$ in $\qderu{b}{\mod A}$ be given. The idea is to write $M$ as a homotopy colimit\footnote{To be precise, we do not consider homotopy colimits in $\dbsinge(A)$, since coproducts in this category are rather subtle. Instead, we consider the image under the quotient functor $\qderu{b}{\Mod A} \lto \dbsinge(A)$ of homotopy colimits in $\qderu{b}{\Mod A}$.} of a sequence of bounded complexes with finite length cohomology; it follows that $M$ is a direct summand of one of the terms in this sequence, from which we conclude that $k$ classically generates. First, we set up some notation. Given $a \in A$, define complexes
\begin{align*}
\ukoszul{a}{A} &:= A \xlto{a} A, \text{ and } \koszul{a}{A} := A \xlto{\can} A[a^{-1}],
\end{align*}
both concentrated in degrees zero and one, and observe that the commutative diagram
\[
\xymatrix{
A \ar[d]_a \ar[r]^1 & A \ar[d]_{a^2} \ar[r]^1 & A \ar[d]_{a^3} \ar[r]^1 & \cdots\\
A \ar[r]_a & A \ar[r]_a & A \ar[r]_a & \cdots
}
\]
is a direct system of complexes $\ukoszul{a}{A} \lto \ukoszul{a^2}{A} \lto \ukoszul{a^3}{A} \lto \cdots$ with colimit~$\koszul{a}{A}$. More generally, given a sequence $\bold{a} = \{ a_1,\ldots,a_d \}$ in $A$, we define $\ukoszul{\bold{a}}{A} := \otimes_{j=1}^d \ukoszul{a_j}{A}$ and $\koszul{\bold{a}}{A} := \otimes_{j=1}^d \koszul{a_j}{A}$. Setting $\bold{a}^i = \{ a_1^i, \ldots, a_d^i \}$, there is a canonical isomorphism $\koszul{\bold{a}}{A} \cong \varinjlim_i \ukoszul{\bold{a}^i}{A}$ and thus a triangle
\begin{equation}\label{eq:dbsing_generated_k0}
\xymatrix@C+0.7pc{
\bigoplus_{i \ge 1} \ukoszul{\bold{a}^i}{A} \ar[r]^{1-\text{shift}} & \bigoplus_{i \ge 1} \ukoszul{\bold{a}^i}{A} \ar[r] & \koszul{\bold{a}}{A} \ar[r] &
}
\end{equation}
in the derived category $\qder{A}$. This triangle expresses the fact
that $\koszul{\bold{a}}{A}$ is the homotopy colimit of the
$\ukoszul{\bold{a}^i}{A}$ in $\qder{A}$. For background on homotopy
colimits, see \cite{BokNee93,NeemanBook}.

Now let $\bold{a}$ be a system of parameters for $A$, and extend the
augmentation morphism $\koszul{\bold{a}}{A} \xlto{\varepsilon} A$ to a
triangle $\koszul{\bold{a}}{A} \lto A \lto \cech{\bold{a}}{A} \lto$,
where the complex $\cech{\bold{a}}{A} := \Sigma \Ker(\varepsilon)$ is
given in each degree by $\cech{\bold{a}}{M}^t = \oplus_{i_0 < \cdots <
  i_t} A[a_{i_0}^{-1},\ldots,a_{i_t}^{-1}]$. Tensoring with $M$, we
obtain a triangle
\begin{equation}\label{eq:dbsing_generated_k1}
\koszul{\bold{a}}{A} \otimes_A M \lto M \lto \cech{\bold{a}}{A} \otimes_A M \lto
\end{equation}
in $\qder{A}$. Since $A$ has an isolated singularity,
$M[a_{i_0}^{-1}\cdots a_{i_t}^{-1}]$ has finite projective dimension
over $A[a_{i_0}^{-1}\cdots a_{i_t}^{-1}]$, and hence also over $A$,
for every sequence of indices $i_0 < \cdots < i_t$ in $\{ 1,\ldots,d
\}$. Here we use the fact that $A[a_{i_0}^{-1}\cdots a_{i_t}^{-1}]$
has finite projective dimension as an $A$-module\footnote{By induction
  this reduces to the observation that $\operatorname{pd}_A A[a^{-1}]
  \le 1$, which holds because $A[a^{-1}] = A[X]/(aX-1)$.}. We conclude
that $\cech{\bold{a}}{A} \otimes_A M$ is, up to isomorphism in
$\qder{A}$, a bounded complex of projective $A$-modules, whence the
triangle (\ref{eq:dbsing_generated_k1}) gives rise to an isomorphism
$\koszul{\bold{a}}{A} \otimes_A M \cong M$ in $\dbsinge(A)$. Note that
the coproduct $\oplus_{i \ge 1} \ukoszul{\bold{a}^i}{A} \otimes_A M$
is bounded, so tensoring (\ref{eq:dbsing_generated_k0}) with $M$
yields a triangle in $\dbsinge(A)$ of the form
\begin{equation}\label{eq:second_triangle}
  \xymatrix@C+0.7pc{
    \bigoplus_{i \ge 1} \ukoszul{\bold{a}^i}{A} \otimes_A M \ar[r]^{1-\operatorname{shift}} & \bigoplus_{i \ge 1} \ukoszul{\bold{a}^i}{A} \otimes_A M \ar[r] & M \ar[r] &.
  }
\end{equation}
In what follows, let $\Hom(-,-)$ denote morphism sets in
$\dbsinge(A)$. One can check (see Lemma
\ref{lemma:fg_commutes_coproducts} below) that $\Hom(M,-)$ commutes
with coproducts coming from $\qderu{b}{\Mod A}$ via the quotient
functor, so applying $\Hom(M,-)$ to (\ref{eq:second_triangle}) and
using the argument of \cite[Lemma 2.8]{Neeman96} we deduce that
\[
\Hom(M,M) \cong \varinjlim_i \Hom(M, \ukoszul{\bold{a}^i}{A} \otimes_A M).
\]
In particular, the identity $1_M: M \lto M$ corresponds to a split
monomorphism $M \lto \ukoszul{\bold{a}^k}{A} \otimes_A M$ in
$\dbsinge(A)$ for some $k \ge 1$. The functor $\dbsing(A) \lto
\dbsinge(A)$ is fully faithful, so $M$ is also a direct summand of
$\ukoszul{\bold{a}^k}{A} \otimes_A M$ in $\dbsing(A)$. The cohomology
modules of $\ukoszul{\bold{a}^k}{A} \otimes_A M$ have finite length
($\bold{a}$ is a system of parameters) so this complex is an iterated
extension in $\qderu{b}{\mod A}$ of finite direct sums of copies of
$k$. It is now clear that any thick subcategory of $\dbsing(A)$
containing $k$ must contain $M$, and since $M$ was arbitrary, this
completes the proof.
\end{proof}

\begin{lemma}\label{lemma:simple_factorisation} A morphism $\varphi: M
  \lto C$ in $\qder{A}$ with $M \in \qderu{b}{\mod A}$ and $C \in
  \K^b(\Proj A)$ factors, in $\qder{A}$, as $M \lto Q \lto C$ for some
  $Q \in \K^b(\proj A)$.
\end{lemma}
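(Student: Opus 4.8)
The claim is that a morphism $\varphi\colon M\to C$ in $D(A)$, with $M$ a bounded complex of finitely generated modules and $C$ a bounded complex of (possibly huge) projectives, factors through a bounded complex of \emph{finitely generated} projectives. The natural strategy is to exploit finite generation of the cohomology of $M$ to "compress" the target. First I would replace $M$ by a quasi-isomorphic bounded-above complex $P_\bullet$ of finitely generated projective (indeed free) modules; since $M$ has bounded cohomology, a high truncation $\tau^{\ge n}$ of $P_\bullet$ is a bounded complex $Q$ of finitely generated projectives, and the soft truncation map $P_\bullet\to Q$ is a quasi-isomorphism in the relevant degrees. The map $\varphi$ is represented by an honest chain map $P_\bullet\to C$ (using that $P_\bullet$ is K-projective and $C$ is a genuine complex). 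The problem is then to see that this chain map factors through the truncation $P_\bullet \to Q$.

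\textbf{Key steps.} The factorization through $Q=\tau^{\ge n}P_\bullet$ works degreewise provided the chain map $f\colon P_\bullet\to C$ kills the "low" part of $P_\bullet$ up to homotopy. Concretely: because $C$ is a bounded complex of projectives, it is K-projective, so $\Hom_{K(A)}(P_\bullet,C)=\Hom_{D(A)}(M,C)$; and because $C$ is bounded below, say $C^i=0$ for $i<m$, any chain map $P_\bullet\to C$ automatically vanishes in degrees $<m$. Choosing the truncation degree $n\le m$ and also $n$ below the cohomological support of $M$, the canonical map $P_\bullet\to \tau^{\ge n}P_\bullet=Q$ is a quasi-isomorphism, and the chain map $f$ factors through it on the nose in degrees $\ge n$, while in degrees $<n\le m$ both the map $f$ and the complex $C$ vanish. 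This produces an explicit chain map $Q\to C$ whose composite with $M\cong Q$ recovers $\varphi$. One must double-check that $Q$ genuinely consists of finitely generated projectives: the truncation of a bounded-above complex of finitely generated frees at a single spot is the brutal quotient, which in degree $n$ is a cokernel $P^n/\operatorname{im}(d)$ — not obviously projective — so I would instead use the \emph{stupid} truncation together with the fact that, $n$ being below the cohomology, the cocycles $Z^n(P_\bullet)$ form the first syzygy and can be taken finitely generated, and one resolves that single module finitely. Alternatively, and more cleanly: take $Q$ to be a finite free resolution-style brutal truncation $\sigma^{\ge n}P_\bullet$ where the new bottom term is $\operatorname{coker}(P^{n-1}\to P^n)$, which is finitely \emph{presented} but perhaps not projective — so one further replaces that cokernel by a finitely generated projective resolving it, at the cost of adding finitely many terms. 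Since $A$ is noetherian and $M$ has finitely generated cohomology, all syzygies are finitely generated, so this is harmless.

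\textbf{Main obstacle.} The real subtlety is \emph{not} the truncation but ensuring the factoring map lands in \emph{projectives} and not merely in finitely generated modules: one needs that $M$ can be represented in $D(A)$ by a bona fide bounded complex of finitely generated projectives \emph{after} we are allowed to add a perfect complex — but that is exactly what is being proved, so circularity must be avoided. The way out is that we do not need $M$ itself to be perfect; we only need the single chain map $\varphi$ to factor, and for that it suffices to produce \emph{some} bounded complex $Q$ of finitely generated projectives together with maps $M\to Q\to C$ composing to $\varphi$, with no constraint that $M\to Q$ be an isomorphism. So I would build $Q$ from a finite free resolution of the finitely many cohomology modules of $M$ truncated suitably, map $M$ into it by lifting $\varphi$ through the K-projectivity of $C$, and check compatibility. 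The one genuinely delicate point to verify carefully is that the chain-level lift $P_\bullet\to C$ and the truncation are compatible enough that the induced $Q\to C$ is a chain map and not merely a map up to homotopy — which is handled by choosing the truncation degree strictly below both the cohomological support of $M$ and the amplitude of $C$, so that the relevant differentials to reconcile are zero.
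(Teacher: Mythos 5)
Your strategy is to manufacture the perfect complex $Q$ out of a truncation of a projective resolution $P_\bullet$ of $M$, and this is where the argument breaks down. The truncation through which the chain map $f\colon P_\bullet\to C$ genuinely factors at the chain level is the soft one, whose bottom term is a cokernel (a syzygy of $M$), and as you yourself observe this module is not projective. Your two proposed repairs do not close the gap: \emph{(i)} the claim that one can ``resolve that single module finitely'' or replace it ``by a finitely generated projective resolving it, at the cost of adding finitely many terms'' is false in general --- a finite projective resolution of that syzygy exists only when $M$ is already perfect, which over a singular ring fails (e.g.\ $M=k$), and this is exactly the circularity you flag; \emph{(ii)} the final ``way out'' (build $Q$ from truncated free resolutions of the cohomology modules of $M$ and ``map $M$ into it by lifting $\varphi$ through the K-projectivity of $C$'') is not an argument: no mechanism is given for producing a map $M\to Q$ compatible with $\varphi$, and K-projectivity of $C$ controls maps out of $C$, not maps from $M$ into an auxiliary complex. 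There is also a smaller slip: for cochain complexes the degreewise projection onto the brutal truncation $\sigma^{\ge n}P_\bullet$ is not a chain map (the square at degrees $n-1,n$ fails unless the differential vanishes there), so the assertion that $f$ factors ``on the nose'' through that truncation is incorrect as stated; only the soft truncation works, returning you to the non-projective cokernel.

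The missing idea --- and the one the paper uses --- is to build $Q$ inside $C$ rather than out of $M$. Replace $M$ by a bounded-above complex of finitely generated projectives, $C$ by a bounded complex of \emph{free} modules, and $\varphi$ by an honest chain map. Then in each degree the image of $\varphi^i$ is finitely generated, hence contained in a finite free submodule of the free module $C^i$; starting at the lowest degree $n$ of $C$, choose a finite free $Q^n\subseteq C^n$ containing $\operatorname{im}\varphi^n$, then a finite free $Q^{n+1}\subseteq C^{n+1}$ containing $\operatorname{im}\varphi^{n+1}+\partial(Q^n)$, and so on; since $C$ is bounded the process stops, and one obtains a bounded subcomplex $Q\subseteq C$ with finite free terms together with a chain-level factorization $M\to Q\to C$. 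Note that no quasi-isomorphism between $M$ and $Q$ is needed, which is precisely how the circularity you worried about is avoided; but the finiteness is extracted from submodules of $C$, not from truncating or resolving $M$, and that step is absent from your proposal.
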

\begin{proof}
  We may, without loss of generality, assume that $M$ is a bounded
  above complex of finitely generated projective $A$-modules, that $C$
  is a bounded complex of free $A$-modules, and that $\varphi$ is a
  morphism of complexes. Let $n \in \mathbb{Z}$ be such that $C^i = 0$
  for $i < n$. The image of $\varphi^n: M^n \lto C^n$ is finitely
  generated, so let $Q^n$ be a finite free submodule of $C^n$ with the
  property that $\varphi^n$ factors as $M^n \lto Q^n \lto
  C^n$. Similarly, let $Q^{n+1}$ be a finite free submodule of
  $C^{n+1}$ with the property that $\operatorname{Im}(\varphi^{n+1})
  + \partial( Q^n ) \subseteq Q^{n+1}$, where $\partial$ is the
  differential. Then $\varphi^{n+1}$ factors as $M^{n+1} \lto Q^{n+1}
  \lto C^{n+1}$ and the differential restricts to a map $\partial|_Q:
  Q^n \lto Q^{n+1}$. Proceeding in this way we construct a bounded
  complex $Q$ of finite free $A$-modules and a factorization $M \lto Q
  \lto C$, as required.
\end{proof}

\begin{lemma}\label{lemma:fg_commutes_coproducts} Let $\{ X_i \}_{i
    \in I}$ be a family of bounded complexes of $A$-modules such that
  there exist $a,b \in \mathbb{Z}$ with $X_i^k = 0$ for all $k \notin
  [a,b]$ and $i \in I$. Then, given $M \in \qderu{b}{\mod A}$, the
  canonical map
\[
\oplus_i \Hom_{\dbsinge(A)}(M, X_i) \lto \Hom_{\dbsinge(A)}(M, \oplus_i X_i)
\]
is an isomorphism, where $\oplus_i X_i$ denotes the degree-wise
coproduct of complexes.
\end{lemma}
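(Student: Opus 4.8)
The plan is to reduce the statement to a single cofinality/compactness argument in the enlarged singularity category $\dbsinge(A)$, exactly as in the proof of Proposition~\ref{theorem:dbsing_generated_k}. The only thing we need is that $\Hom_{\dbsinge(A)}(M,-)$ sends the canonical map $\oplus_i X_i \to \oplus_i X_i$ (the identity, used as a bookkeeping device) — more precisely, the coproduct occurring in the homotopy-colimit triangle — to the corresponding coproduct of $\Hom$-groups. So what must be shown is that $M$, viewed as an object of $\dbsinge(A)$, is \emph{compact} relative to the uniformly bounded coproducts described in the hypothesis.

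First I would choose, without loss of generality, a representative of $M$ that is a bounded-above complex of finitely generated projective $A$-modules; this is possible since $M \in D^b(\mod A)$. Next, recall that a morphism $M \to \oplus_i X_i$ in $\dbsinge(A)$ is represented by a fraction $M \xrightarrow{s} M' \xleftarrow{\text{qis}}$ data, or more concretely, since $\dbsinge(A)$ is a Verdier quotient of $D^b(\Mod A)$ by $\K^b(\Proj A)$, by a roof $M \to Y \leftarrow \oplus_i X_i$ where the second map has cone in $\K^b(\Proj A)$; equivalently (after rotating) by a genuine morphism $M \to \oplus_i X_i$ in $D^b(\Mod A)$ modulo those that factor through an object of $\K^b(\Proj A)$. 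The key point is that because the $X_i$ are uniformly bounded and $M$ is a bounded-above complex of finitely generated projectives, any morphism of complexes $M \to \oplus_i X_i$ has image contained, in each degree, in a finite sub-coproduct $\oplus_{i \in I_0} X_i$ (only finitely many components are hit in each of the finitely many relevant degrees); hence the morphism factors through $\oplus_{i \in I_0} X_i$ for a finite $I_0$. This gives surjectivity of the canonical map.

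For injectivity I would argue similarly: if a morphism $M \to X_{i_0}$ (for a single index, or a finite sub-sum) becomes zero in $\dbsinge(A)$ after composing with the inclusion into $\oplus_i X_i$, then by definition it factors in $D^b(\Mod A)$ through some $C \in \K^b(\Proj A)$, say $M \to C \to \oplus_i X_i$. Here I invoke Lemma~\ref{lemma:simple_factorisation} to replace $C$ by a \emph{bounded complex of finitely generated} projectives $Q \in \K^b(\proj A)$, so the composite $M \to Q \to \oplus_i X_i$ again has image, degreewise, in a finite sub-coproduct; projecting back to the single factor $X_{i_0}$ shows the original map already factored through $Q$ in a way compatible with landing in $X_{i_0}$, hence was already zero in $\dbsinge(A)$. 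The main obstacle is this injectivity half: one must be careful that the factorization through $\K^b(\Proj A)$, which a priori involves an \emph{unbounded-rank} free complex, can be cut down to finite rank — and that is precisely what Lemma~\ref{lemma:simple_factorisation} is for. With that in hand, the uniform boundedness of the $X_i$ together with finite generation of the terms of $M$ and of $Q$ closes the argument. The verification that the map is additive and natural, and compatible with the degreewise coproduct structure, is routine.
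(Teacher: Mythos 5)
The surjectivity half of your argument rests on the assertion that a morphism $M \lto \oplus_i X_i$ in $\dbsinge(A)$ is ``equivalently a genuine morphism $M \lto \oplus_i X_i$ in $\qderu{b}{\Mod A}$ modulo those that factor through an object of $\K^b(\Proj A)$''. Only half of this is true: the kernel of the comparison map $\Hom_{\qderu{b}{\Mod A}}(M,X) \lto \Hom_{\dbsinge(A)}(M,X)$ is indeed the set of maps factoring through $\K^b(\Proj A)$, but this comparison map is not surjective in general. A morphism in the Verdier quotient is a roof $M \longleftarrow Y \lto \oplus_i X_i$ whose first leg has cone in $\K^b(\Proj A)$, and it need not be equivalent to any roof whose first leg is the identity. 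For a concrete failure, take $A=k[x]/(x^2)$: one has $\Hom_{\dbsinge(A)}(k,k[-1]) \cong \underline{\Hom}_A(k,\Omega k) \cong k \neq 0$, while $\Hom_{\qderu{b}{\Mod A}}(k,k[-1])=\Ext^{-1}_A(k,k)=0$. Consequently your degreewise-image argument (which is fine for honest chain maps out of a bounded-above complex of finitely generated projectives) only shows that morphisms coming from $\qderu{b}{\Mod A}$ factor through a finite subcoproduct; it says nothing about an arbitrary morphism of $\dbsinge(A)$, and that is precisely the content of the lemma.

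The missing step, which is the actual work in the paper's proof, is to repair the apex of the roof: a priori $Y$ lies only in $\qderu{b}{\Mod A}$, so no finiteness is available for the leg $Y \lto \oplus_i X_i$. The paper uses Lemma \ref{lemma:simple_factorisation} to factor the map $M \lto C_f$ (where $C_f\in\K^b(\Proj A)$ is the cone of $Y \lto M$) through some $Q \in \K^b(\proj A)$, and then the octahedral axiom to replace $Y$ by $\Sigma^{-1}C'$ with $C'$ the cone of $M \lto Q$; this new apex lies in $\qderu{b}{\mod A}$, hence is compact in $\qderu{b}{\Mod A}$ by \cite[Proposition 6.15]{Rouquier}, so its map to $\oplus_i X_i$ factors through a finite subcoproduct, and therefore so does the original morphism in $\dbsinge(A)$. (Your use of Lemma \ref{lemma:simple_factorisation} in the injectivity half is harmless but beside the point: injectivity is immediate because any finite subcoproduct is a direct summand of $\oplus_i X_i$ in $\dbsinge(A)$.) Without some version of this roof-replacement and compactness argument, your proposal does not establish the key factorization through a finite subcoproduct.
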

\begin{proof}
  By a standard argument, it is enough to prove that any morphism $M
  \lto \oplus_i X_i$ in $\dbsinge(A)$ factors through a finite
  subcoproduct. Such a morphism is defined by a roof
\begin{equation}\label{eq:fg_commutes_coproducts_roof}
\xymatrix@R-1pc{
& Y \ar[dl]_f \ar[dr]\\
M & & \oplus_i X_i
}
\end{equation}
in $\qderu{b}{\Mod A}$, where the cone of $f$ is a bounded complex
$C_f$ of projective $A$-modules. Extending $f$ to a triangle $Y \lto M
\lto C_f \lto$ in $\qderu{b}{\Mod A}$ we deduce from Lemma
\ref{lemma:simple_factorisation} that $M \lto C_f$ factors as $M \lto
Q \lto C_f$ for some $Q \in \K^b(\proj A)$. Let $C'$ denote the cone
of $M \lto Q$. From the octahedral axiom applied to the pair $(M \lto
Q, Q \lto C_f)$ we obtain a commutative diagram in $\qderu{b}{\Mod A}$
of the form
\[
\xymatrix{
\Sigma^{-1} C' \ar[d]_h \ar[dr]\\
Y \ar[r]_f & M
}
\]
where the cone of $h$ belongs to $\K^b(\Proj A)$. The upshot is that
the morphism in $\dbsinge(A)$ represented by the roof in
(\ref{eq:fg_commutes_coproducts_roof}) may also be represented by a
roof with $Y \in \qderu{b}{\mod A}$ (replace $Y$ with $\Sigma^{-1}
C'$). In this case $Y$ is compact in $\qderu{b}{\Mod A}$ by
\cite[Proposition 6.15]{Rouquier}, so the morphism $Y \lto \oplus_i
X_i$ in the roof factors through a finite subcoproduct, which implies
that $M \lto \oplus_i X_i$ factors through a finite subcoproduct in
$\dbsinge(A)$.
\end{proof}

\begin{proof}[Proof of Proposition \ref{theorem:first_theorem}] To begin with, let $A$ denote an arbitrary local noetherian ring, and consider the canonical functor
\[
\gamma':= - \otimes_A \widehat{A}: \dbsinge(A) \lto \dbsinge(\widehat{A}).
\]
Restriction of scalars defines a functor $(-)_A: \qderu{b}{\Mod
  \widehat{A}} \lto \qderu{b}{\Mod A}$ which sends a bounded complex
of projective $\widehat{A}$-modules to a bounded complex of flat
$A$-modules. Since flat $A$-modules have finite projective dimension
by \cite[Part II, Corollary 3.2.7]{Raynaud71}, there is an induced
functor
\[
(-)_A: \dbsinge(\widehat{A}) \lto \dbsinge(A)
\]
right adjoint to $\gamma'$. The unit of this adjunction is the canonical morphism
\[
1 \lto ( - \otimes_A \widehat{A})_A,
\]
which is obviously an isomorphism on $k$, and thus also an isomorphism
on the smallest thick subcategory $\cat{S}$ of $\dbsinge(A)$
containing $k$. By a standard argument of category theory, the
restriction of $\gamma'$ to $\cat{S}$ is fully faithful. In
particular, $\gamma$ induces an equivalence of the smallest
triangulated subcategory of $\dbsing(A)$ containing $k$ with the
smallest triangulated subcategory of $\dbsing(\widehat{A})$ containing
$k$.

Now we assume that $A$ is a G-ring with an isolated singularity. The
(only) reason for assuming that $A$ is a G-ring is that this
guarantees that the completion $\widehat{A}$ has an isolated
singularity \cite[Lemma 2.7]{Wiegand98}. By Proposition
\ref{theorem:dbsing_generated_k} the subcategory $\cat{S}$ includes
the image of $\dbsing(A)$ under the canonical embedding $\dbsing(A)
\lto \dbsinge(A)$, from which we infer that $\gamma$ is fully
faithful. It follows from a second application of Proposition
\ref{theorem:dbsing_generated_k} that the thick closure of
$\dbsing(A)$ in $\dbsing(\widehat{A})$ is all of
$\dbsing(\widehat{A})$. Since the thick closure of a triangulated
subcategory is just the class of all direct summands of objects in the
subcategory \cite[Remark 2.1.39]{NeemanBook}, $\gamma$ is an
equivalence up to direct summands.
\end{proof}
It is easy to construct examples where $\gamma$ is not an equivalence.
It suffices to give a Cohen-Macauly module over the completion of a Gorenstein
local ring $\hat{A}$ which is not \emph{extended} from $A$, i.e.\ which is
not of the form $\hat{M}$ for a Cohen-Macaulay $A$-module.
\begin{example}\label{example:not_an_equivalence} Let $A =
  \mathbb{C}[X,Y]_{(X,Y)}/(X^3 + X^2 - Y^2)$ be the local ring of a
  node, so the completion of $A$ is isomorphic to the reduced ring $S
  = \mathbb{C}[[U,V]]/(UV)$. This is a singularity of type $(A_1)$ and
  by \cite[(9.9)]{Yoshino90} there are up, to isomorphism, exactly
  three indecomposable maximal Cohen-Macaulay $S$-modules, which are
\[
S, \; \mf{p} = US, \text{ and } \mf{q} = VS.
\]
Clearly $S/\mf{p} \cong \mf{q}$, whence $\mf{q} \cong \Sigma \mf{p}$
in $\dbsing(S)$. Since $\mf{p},\mf{q}$ are minimal prime ideals,
$S_\mf{p}$ and $S_\mf{q}$ are fields, and it follows from a result of
Levy and Odenthal \cite[Theorem 6.2]{Levy96} that a finitely generated
$S$-module $M$ is extended if and only if
$\operatorname{rank}_{S_\mf{p}}(M_\mf{p}) =
\operatorname{rank}_{S_\mf{q}}(M_\mf{q})$. Hence $\mf{p}$ and $\mf{q}$
are not extended, and thus not in the essential image of $\gamma$, but
their direct sum $\mf{p} \oplus \mf{q}$ is extended. This corresponds
to the fact that the nodal curve is irreducible, while the curve $XY =
0$ has two irreducible components. Another argument that $\mf{p}
\oplus \mf{q} \cong \mf{p} \oplus \Sigma \mf{p}$ in $\dbsing(S)$
belongs to the essential image of $\gamma$ uses K-theory: simply apply
\cite[Corollary 4.5.12]{NeemanBook}.

Note that $\{ U - V \}$ is a system of parameters for $S$. It follows
from the proof of Proposition \ref{theorem:dbsing_generated_k} that
$\mf{p}$ is a direct summand in $\dbsing(S)$ of $\ukoszul{(U-V)^n}{S}
\otimes \mf{p}$ for some $n \ge 1$. In fact, $\ukoszul{(U-V)^n}{S}
\otimes \mf{p} = \mf{p} \xlto{U^n} \mf{p}$ is quasi-isomorphic to
$\Sigma^{-1} \mf{p}/\mf{p}^{n+1}$, and $\mf{p}$ is a direct summand of
$\Sigma^{-1} \mf{p}/\mf{p}^2$ in $\dbsing(S)$. To see this, observe
that there is a triangle in the derived category
\[
\mf{p} \xlto{U} \mf{p} \lto \mf{p}/\mf{p}^2 \lto \Sigma \mf{p},
\]
and $U: \mf{p} \lto \mf{p}$ is zero in $\dbsing(S)$ (as it factors via $S$) so we may conclude that $\mf{p} \oplus \Sigma \mf{p} \cong \mf{p}/\mf{p}^2$ in $\dbsing(S)$. Since $\mf{p}/\mf{p}^2$ is isomorphic as an $S$-module to the residue field $\mathbb{C}$, we see for a third time that $\mf{p} \oplus \Sigma \mf{p} \cong \mathbb{C}$ is in the essential image of $\gamma$.
\end{example}

\begin{remark}\label{remark:henselization} Denoting by $A^h$ the
  Henselization of $A$, the ring homomorphisms $A \lto A^h \lto
  \widehat{A}$ give rise to a factorization of $\gamma$ as the
  composite
\[
\dbsing(A) \xlto{\gamma_1} \dbsing(A^h) \xlto{\gamma_2} \dbsing(\widehat{A}),
\]
where $\gamma_1 = - \otimes_A A^h$ and $\gamma_2 = - \otimes_{A^h}
\widehat{A}$. In the situation of Proposition \ref{theorem:first_theorem},
$\gamma_2$ is an equivalence: up to a shift, every object of
$\dbsing(\widehat{A})$ is a finitely generated module $M$ free on the
punctured spectrum, and by Elkik's theorem \cite[Th\'eor\`eme
3]{Elkik74} such modules can be descended to the Henselization; that
is, there exists a finitely generated $A^h$-module $N$ such that $M
\cong \widehat{N}$. In particular, $\gamma$ is an honest equivalence
(not just up to direct summands) when $A$ is Henselian.
\end{remark}

Now we give the proof of
Proposition~\ref{ref-1.4-3}.  In \cite{Krause05} Krause produces an
embedding $\mu: \dbsing(A) \hookrightarrow \K_{\ac}(\Inj A)$, where
$\K_{\ac}(\Inj A)$ is the homotopy category of $C_{\ac}(\Inj A)$ of
acyclic complexes of injective $A$-modules. This category is compactly
generated, and $\mu$ induces an equivalence up to direct summands
between $\dbsing(A)$ and the full subcategory of compact objects in
$\K_{\ac}(\Inj A)$.

The embedding $\mu$ produces a DG-enhancement for $\dbsing(A)$ where for
$M,N\in \dbsing(A)$ we put
\[
\RHom_{\dbsing(A)}(M,N)=\underline{\Hom}_{C_{\ac}(\Inj A)}(\mu(M),\mu(N))
\]
If $A$ is a noetherian $\ZZ$-graded ring (not necessarily commutative)
then we may define the graded singularity category
$\dbsing^{\text{gr}}(A)$ in the obvious way.

Since $\dbsing^{\text{gr}}(A)$ has an analogous DG-enhancement as
$\dbsing(A)$ we may define the orbit category
$\dbsing^{\text{gr}}(A)/(1)$ (see \cite{Keller6}). By construction
$\dbsing^{\text{gr}}(A)/(1)$ is a triangulated category (with a
DG-enhancement) equipped with an exact functor
\[
\sigma:\dbsing^{\text{gr}}(A)\r \dbsing^{\text{gr}}(A)/(1)
\]
such that $\dbsing^{\text{gr}}(A)/(1)$ is classically generated by its
essential image and such  that for $M,N\in \dbsing^{\text{gr}}(A)$ we 
have
\[
\Hom_{\dbsing^{\text{gr}}(A)/(1)}(\sigma M,\sigma N)=\bigoplus_i \Hom_{\dbsing^{\text{gr}}(A)}(M,N(i))
\]
Forgetting the grading yields an exact functor
\[
F:\dbsing^{\text{gr}}(A)\r \dbsing(A)
\]
which makes the shift $(1)$ isomorphic to the identity functor. 
Hence by the universal property of orbit categories $F$ factors canonically through
\[
\tilde{F}:\dbsing^{\text{gr}}(A)/(1)\r \dbsing(A)
\]
\begin{lemma} \label{fff} The functor $\tilde{F}$ is fully faithful. 
\end{lemma}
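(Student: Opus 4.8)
The plan is to reduce, via the construction of the orbit category, to an explicit comparison of morphism spaces, and then to compute both sides by the usual truncation description of singularity categories. First I would use that $\dbsing^{\text{gr}}(A)/(1)$ is classically generated by the essential image of $\sigma$ (which is closed under the shift $[1]$ since $\sigma$ is exact), that $\tilde F$ is exact, and that full faithfulness of an exact functor propagates along distinguished triangles (five lemma) and along direct summands. Hence it suffices to prove that for all $M,N\in\dbsing^{\text{gr}}(A)$ the map induced by the forgetful functor,
\[
\bigoplus_{i\in\ZZ}\Hom_{\dbsing^{\text{gr}}(A)}(M,N(i))\lto\Hom_{\dbsing(A)}(FM,FN),
\]
is bijective: by construction the left-hand side is $\Hom_{\dbsing^{\text{gr}}(A)/(1)}(\sigma M,\sigma N)$, and by the universal property through which $\tilde F$ is defined, the displayed arrow is exactly $\tilde F$ on morphism spaces.

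To compute the right-hand side I would use the standard presentation of morphisms in a singularity category by brutal truncations of projective resolutions. Choose a resolution of $N$ by finitely generated graded projective modules $\cdots\to P_{1}\to P_{0}\to N\to 0$, let $\Omega^{k}N$ be the $k$-th graded syzygy, and let $P^{[k]}=[\,P_{k-1}\to\cdots\to P_{0}\,]$; then $P^{[k]}$ is perfect and there is a distinguished triangle $N\to\Omega^{k}N[k]\to P^{[k]}[1]\to$, so the maps $N\to\Omega^{k}N[k]$ have perfect cones, form a directed system, and are cofinal among all morphisms out of $N$ with perfect cone. Consequently
\[
\Hom_{\dbsing(A)}(M,N)\;\cong\;\varinjlim_{k}\Hom_{D^{b}(\mod A)}(M,\Omega^{k}N[k]),
\]
and the analogous formula holds over $\gr A$. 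Since a finitely generated graded projective resolution forgets to a finitely generated projective resolution, $F(\Omega^{k}N)=\Omega^{k}(FN)$, and the directed system downstairs is the forgetful image of the one upstairs.

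Next I would invoke the elementary decomposition, valid for finitely generated graded $A$-modules $P,Q$ (or bounded complexes of such),
\[
\Hom_{D^{b}(\mod A)}(FP,FQ)\;=\;\bigoplus_{i\in\ZZ}\Hom_{D^{b}(\gr A)}(P,Q(i)),
\]
which follows by resolving $P$ by finitely generated graded projectives and using that a homomorphism of underlying modules out of a finitely generated graded module is the sum of its finitely many homogeneous components, together with exactness of $\bigoplus$. Substituting this into the colimit formula and interchanging $\bigoplus_{i}$ with the filtered colimit yields
\[
\Hom_{\dbsing(A)}(FM,FN)\;\cong\;\bigoplus_{i\in\ZZ}\varinjlim_{k}\Hom_{D^{b}(\gr A)}(M,(\Omega^{k}N)(i))\;\cong\;\bigoplus_{i\in\ZZ}\Hom_{\dbsing^{\text{gr}}(A)}(M,N(i)),
\]
and a check of naturality identifies this composite with the map induced by $\tilde F$, which finishes the argument.

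I expect the main obstacle to be organizational rather than conceptual: one must verify that the colimit presentation of singularity-category morphisms is compatible with composition and with the forgetful functor, and that the chain of isomorphisms above is genuinely $\tilde F$ on $\Hom$-spaces (so that one obtains full faithfulness, not merely an abstract matching of morphism groups). An alternative, perhaps cleaner given that the DG-enhancements are already at hand, is to argue at the level of Krause's model: over a $\ZZ$-graded noetherian ring a graded-injective module is injective after forgetting the grading, so the forgetful functor induces a DG-functor $C_{\ac}(\Inj^{\text{gr}}A)\to C_{\ac}(\Inj A)$ compatible with the embeddings $\mu$, and the same $\Hom$-decomposition (using that the compact objects are finitely built from finitely generated modules) gives an isomorphism
\[
\underline{\Hom}_{C_{\ac}(\Inj A)}(\mu(FM),\mu(FN))\;\cong\;\bigoplus_{i\in\ZZ}\underline{\Hom}_{C_{\ac}(\Inj^{\text{gr}}A)}(\mu(M),\mu(N(i)))
\]
of $\Hom$-complexes, so that $\tilde F$ is even fully faithful on the DG level.
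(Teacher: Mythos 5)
Your proposal is correct and follows essentially the same route as the paper: reduce full faithfulness of $\tilde F$ to the identity $\Hom_{\dbsing(A)}(FM,FN)\cong\bigoplus_i\Hom_{\dbsing^{\text{gr}}(A)}(M,N(i))$, compute singularity-category morphisms as a filtered colimit over syzygies, and conclude via the decomposition of ungraded $\Hom$'s out of finitely generated graded modules together with commuting $\bigoplus$ with the colimit. The only (cosmetic) differences are that you use the one-sided cofinal system $N\to\Omega^kN[k]$ where the paper uses the well-known two-sided formula $\varinjlim_n\Hom_{D(A)}(\Omega^nM,\Omega^nN)$ after first reducing $M,N$ to modules, and your asserted cofinality is left at the same level of detail as the paper's citation of that formula.
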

\begin{proof} We have to prove that for $M,N \in \dbsing^{\text{gr}}(A)$ we
have 
\[
\Hom_{\dbsing(A)}(M,N)=\bigoplus_i \Hom_{\dbsing^{\text{gr}}(A)}(M,N(i))
\]
By considering cones over suitable truncated projective resolutions we
may assume that $M,N$ are finitely generated $A$-modules.

We then use the well-known formula
\[
\Hom_{\dbsing(A)}(M,N)=\dirlim_n \Hom_{D(A)}(\Omega^n M,\Omega^n N)
\]
and the corresponding formula in the graded case. This reduces us
to proving 
\[
\Hom_{D(A)}(\Omega^n M,\Omega^n N)=\bigoplus_i \Hom_{D^{\text{gr}}(A)}(\Omega^n M,\Omega^n N(i))
\]
This follows easily by replacing $M$ by a projective resolution. 
\end{proof}
\begin{proposition}
\label{gradedcase} Let $A=k+A_1+A_2\cdots$
 be a finitely generated commutative graded $k$-algebra with the
augmentation ideal $m=A_{>0}$ defining an isolated singularity. Then
we have equivalences 
\[
\dbsing^{\text{gr}}(A)/(1)\xrightarrow{\widetilde{F}} \dbsing(A)\xrightarrow{(-)_m}
\dbsing(A_m)\xrightarrow{\widehat{A}\otimes_{A}-} \dbsing(\hat{A})
\]
\end{proposition}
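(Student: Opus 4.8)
The plan is to analyse the three functors in turn: the first via Lemma~\ref{fff}, the third via Proposition~\ref{theorem:first_theorem}, and the middle one by a direct computation exploiting the isolated singularity. The idempotent-completeness of $\dbsing^{\text{gr}}(A)/(1)$ — it is a Karoubi hull by construction — will then be propagated along the chain to upgrade the ``equivalence up to summands'' of Proposition~\ref{theorem:first_theorem} to an honest equivalence. First, some bookkeeping: as $A$ is finitely generated over a field it is excellent, so $A_m$ is a local G-ring; its primes are the primes of $A$ contained in $m$, and for such $\mf p\neq m$ one has $(A_m)_\mf p=A_\mf p$ regular, so $A_m$ has an isolated singularity. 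Since $A/m^n$ is supported on $\{m\}$ it equals $A_m/m^nA_m$, whence $\widehat A=\widehat{A_m}$ and the third functor is precisely the functor $\gamma$ of Proposition~\ref{theorem:first_theorem} applied to $A_m$; in particular it is fully faithful and essentially surjective up to direct summands.

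Next I would show that the middle functor $(-)_m$ is an equivalence. Essential surjectivity is routine: every object of $\dbsing(A_m)$ lifts to $D^b(\mod A_m)$, and every bounded complex of finitely generated $A_m$-modules is the localization of a bounded complex of finitely generated $A$-modules. For full faithfulness, reduce as in the proof of Lemma~\ref{fff} — by passing to cones over truncated projective resolutions, which $(-)_m$ respects — to the case of finitely generated modules $M,N$, and use $\Hom_{\dbsing(A)}(M,N)=\varinjlim_n\underline{\Hom}_A(\Omega^nM,\Omega^nN)$. For $n\ge\dim A$ the module $(\Omega^nM)_\mf p$ is free over the regular local ring $A_\mf p$ for every $\mf p\neq m$, so $\underline{\Hom}_A(\Omega^nM,\Omega^nN)$ is a finitely generated $A$-module supported on $\{m\}$, hence equal to its localization at $m$. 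Passing to the colimit, and using that localization commutes with $\varinjlim$ and with formation of stable $\Hom$ of finitely generated modules, we obtain an isomorphism onto $\Hom_{\dbsing(A_m)}(M_m,N_m)$. So $(-)_m$ is an equivalence, and in particular $\dbsing(A)\simeq\dbsing(A_m)$ is classically generated by $k$, by Proposition~\ref{theorem:dbsing_generated_k}.

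For the first functor, Lemma~\ref{fff} gives that $\widetilde F$ is fully faithful, so it remains to prove essential surjectivity. Its essential image is a triangulated subcategory of $\dbsing(A)$ containing $k=F(A/m)$. Because $\dbsing^{\text{gr}}(A)/(1)$ is idempotent-complete and $\widetilde F$ is fully faithful, this image is in fact thick: an idempotent on $\widetilde F(X')$ pulls back to an idempotent on $X'$, which splits, and $\widetilde F$ carries the splitting back. A thick subcategory containing the classical generator $k$ is all of $\dbsing(A)$, so $\widetilde F$ is an equivalence. Consequently $\dbsing(A)$, hence also $\dbsing(A_m)$, is idempotent-complete; and now the functor $\gamma\colon\dbsing(A_m)\to\dbsing(\widehat A)$ is fully faithful with idempotent-complete triangulated source, so its essential image is thick, and since every object of $\dbsing(\widehat A)$ is a direct summand of an object in that image, the image is everything. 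Hence $\gamma$ is an honest equivalence and the composite of the three functors is an equivalence.

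The step I expect to be the main obstacle is the full faithfulness of $(-)_m$: one has to check that the stable $\Hom$ groups of $\dbsing(A)$ are $m$-local, which is exactly the place where the isolated-singularity hypothesis is used, and where care is needed that the filtered colimit defining morphisms in the singularity category is compatible with localization. The remaining ingredients are either quoted from the appendix (Propositions~\ref{theorem:first_theorem} and~\ref{theorem:dbsing_generated_k} and Lemma~\ref{fff}) or formal manipulations with idempotent completions and triangulated functors.
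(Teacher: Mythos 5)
Your proposal is correct and follows the same three-step skeleton as the paper's proof: Lemma \ref{fff} for full faithfulness of $\widetilde F$, classical generation by $k$ (Proposition \ref{theorem:dbsing_generated_k}, transported through the middle equivalence since that proposition is stated for local rings) for essential surjectivity, and Proposition \ref{theorem:first_theorem} for the completion functor. The one place you genuinely diverge is the middle functor: the paper simply quotes Orlov \cite{Orlov04} for the equivalence $\dbsing(A)\to\dbsing(A_m)$, whereas you prove it directly, reducing to finitely generated modules and observing that, because the singularity is isolated, $\underline{\Hom}_A(\Omega^n M,\Omega^n N)$ is supported at $\{m\}$ for $n\ge\dim A$, so the colimit computing stable Homs is unchanged by localization at $m$. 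This costs you the routine verifications you yourself flag (stable Hom and syzygies commute with localization, every object of $\dbsing(A_m)$ is, up to shift, a module extended from $A$), but it buys a self-contained argument very much in the spirit of the appendix. You are also more careful than the printed proof on the Karoubian bookkeeping: the paper asserts that the third functor is an equivalence by Proposition \ref{theorem:first_theorem} (which literally only gives an equivalence up to direct summands) and that essential surjectivity of $\widetilde F$ is ``clear'' from classical generation, leaving implicit that the orbit category is idempotent complete by construction, hence that the relevant essential images are thick; your explicit propagation of idempotent completeness along the chain, upgrading the ``up to summands'' statement to an honest equivalence, is exactly the justification the terse published argument omits.
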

\begin{proof} The third functor is an equivalence because of
Proposition \ref{theorem:first_theorem}. The second functor is
an equivalence because of \cite{Orlov04}. Finally in 
Lemma \ref{fff} we have shown that $\tilde{F}$ is fully faithful.
So we have to show that it is essentially surjective. 
This is clear by Proposition \ref{theorem:dbsing_generated_k} since $k$ lies in the essential image of $\tilde{F}$. 
\end{proof}
Again we obtain Proposition \ref{ref-1.4-3} by invoking Buchweitz's equivalence
$\dbsing(A)\cong \underline{\text{MCM}}(A)$. 

\def\cprime{$'$} \def\cprime{$'$} \def\cprime{$'$}
\ifx\undefined\bysame
\newcommand{\bysame}{\leavevmode\hbox to3em{\hrulefill}\,}
\fi

\end{document}